\documentclass[reqno]{amsart}

\usepackage[utf8]{inputenc}
\usepackage{amsmath, amsthm, amssymb, mathtools, array, enumitem, capt-of}
\usepackage{tikz}
\usetikzlibrary{knots}
\usetikzlibrary{decorations.pathmorphing}
\usetikzlibrary{decorations.pathreplacing}

\usepackage{xcolor}
\usepackage[colorlinks,
    linkcolor={red!50!black},
    citecolor={blue!50!black},
    urlcolor={blue!80!black}]{hyperref}
\usepackage{tikz}


\theoremstyle{plain}
\newtheorem{proposition}{Proposition}[section]
\newtheorem{theorem}[proposition]{Theorem}
\newtheorem{corollary}[proposition]{Corollary}
\newtheorem{lemma}[proposition]{Lemma}
\newtheorem*{theoremA}{Theorem A}

\newtheorem*{theoremB}{Theorem B}
\newtheorem*{theoremC}{Theorem C}
\newtheorem{theoremalph}{Theorem}

\theoremstyle{definition}
\newtheorem{definition}[proposition]{Definition}
\newtheorem{example}[proposition]{Example}

\theoremstyle{remark}
\newtheorem*{claim}{Claim}
\newtheorem{remark}[proposition]{Remark}

\newtheorem*{convention}{Convention}

\newcommand{\sgn}{\operatorname{sgn}}

\newcommand{\wt}{\widetilde}
\newcommand{\cl}{\operatorname{cl}}
\newcommand{\pt}{\operatorname{pt}}
\newcommand{\sm}{\setminus}

\newcommand{\Q}{\mathbb{Q}}

\newcommand{\Z}{\mathbb{Z}}
\newcommand{\C}{\mathbb{C}}
\newcommand{\R}{\mathbb{R}}
\newcommand{\CC}{\mathcal{C}}

\newcommand{\Id}{\operatorname{Id}}
\newcommand{\rk}{\operatorname{rk}}

\newcommand{\Diff}{\operatorname{Diff}}
\newcommand{\Top}{\operatorname{Top}}
\newcommand{\CAT}{\operatorname{CAT}}
\newcommand{\BD}{\operatorname{BD}}
\newcommand{\Wh}{\operatorname{Wh}}

\newcommand{\tmfrac}[2]{\mbox{\large$\frac{#1}{#2}$}} 

\begin{document}
\title{Smooth and topological almost concordance}

\author{Matthias Nagel}
\address{Mathematics \& Statistics,
McMaster University, Canada}
\email{nagel@cirget.ca}

\author[P. Orson]{Patrick Orson}
\address{Department of Mathematics, Boston College, USA}
\email{patrick.orson@bc.edu}

\author{JungHwan Park}
\address{Max Planck Institute for Mathematics, Vivatsgasse 7, 53111 Bonn, Germany}
\email{jp35@mpim-bonn.mpg.de}

\author{Mark Powell}
\address{D\'epartement de Math\'ematiques,
Universit\'e du Qu\'ebec \`a Montr\'eal, Canada}
\email{mark@cirget.ca}

\def\subjclassname{\textup{2010} Mathematics Subject Classification}
\expandafter\let\csname subjclassname@1991\endcsname=\subjclassname
\expandafter\let\csname subjclassname@2000\endcsname=\subjclassname
\subjclass{ 57M27, 57N70}

\begin{abstract}
We investigate the disparity between smooth and topological almost concordance of knots in general 3--manifolds $Y$. Almost concordance is defined by considering knots in $Y$ modulo concordance in~$Y \times [0,1]$ and the action of the concordance group of knots in~$S^3$ that ties in local knots. We prove that the trivial free homotopy class in every~$3$--manifold other than the $3$--sphere contains an infinite family of knots, all topologically concordant, but not smoothly almost concordant to one another.  Then, in every lens space and for every free homotopy class, we find a pair of topologically concordant but not smoothly almost concordant knots. Finally, as a topological counterpoint to these results, we show that in every lens space every free homotopy class contains infinitely many topological almost concordance classes.
\end{abstract}

\maketitle
\section{Introduction}
In this article we will exhibit and study an instance of the disparity between the smooth and topological categories in dimension 4 by looking at
concordance classes of knots in a general $3$--manifold, modulo the action of the concordance group of knots in $S^3$ that ties in local knots.

From now on $Y$ will always denote a closed, connected, oriented $3$--manifold, and concordance of knots in $Y$ will mean concordance in $Y\times[0,1]$. In both the smooth and topologically locally flat categories there is an action of the concordance group of knots in $S^3$ on the set of concordance classes of knots in~$Y$, given by local knotting. If two knots are in the same orbit of this action, we call them \emph{almost concordant} (see Section~\ref{sec:basics} for precise definition). In either category, all knots in~$S^3$ are almost concordant to the unknot. Two smoothly almost concordant knots are also topologically almost concordant. The goal of this article is to show that the converse statement is emphatically not the case. 

The main tool we use in this article to distinguish different smooth classes within a single topological class is the Ozsv{\'a}th-Szab{\'o} $\tau$ invariant for knots in $S^3$ \cite{MR2026543}. In order to leverage the topology of $Y$ and so manoeuvre into a situation where this invariant is applicable, we develop several new covering link arguments.  We also make essential use of a $3$--manifold embedding result due to the first author and H.\ Boden~\cite[Lemma 2.11]{Boden16}.

Our first main theorem shows that there is an infinite disparity between the categories in every manifold $Y\neq S^3$.

\begin{theoremalph}\label{thm:A}
For every $3$--manifold $Y \neq S^3$, there exists an infinite family of null homotopic knots $K_1, K_2, \dots$ in $Y$ that are all topologically concordant to one another, but are mutually distinct in smooth almost concordance.
\end{theoremalph}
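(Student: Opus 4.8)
The plan is to use the Ozsváth-Szabó $\tau$ invariant as the central distinguishing tool, but since $\tau$ is only defined for knots in $S^3$, the real work is engineering a situation where a cover of $Y$ transports our knots into $S^3$ (or into a homology sphere where an $S^3$-valued invariant becomes available). Concretely, I would start from the fact that $Y \neq S^3$ has a nontrivial finite-index cover or at least admits, after the Boden--Nagel embedding result \cite[Lemma 2.11]{Boden16}, an embedded surface or codimension-zero piece we can exploit to see a covering link living in a simpler ambient manifold. The family $K_1, K_2, \dots$ should be built by starting with a single null-homotopic knot $K$ that bounds a disk in $Y$ whose interior we can control, and then taking satellites or connected-sum-with-local-knot variations $K_n$ where the local knots are chosen so that their covering-link images in $S^3$ have prescribed, distinct $\tau$ values. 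The key design constraint is that the $K_n$ must all be \emph{topologically} concordant to one another in $Y \times [0,1]$, so the local modifications must be topologically slice (or differ by topologically slice knots), which by Freedman's theorem means using knots of trivial Alexander polynomial whose $\tau$ invariants are nonetheless distinct.

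First I would fix the construction of the $K_n$ very carefully: choose a sequence of topologically slice knots $J_n$ in $S^3$ (for instance untwisted positively-clasped Whitehead doubles of torus knots, or iterated such doubles, which are topologically slice but have $\tau(J_n) = 1$ and can be combined to realize infinitely many distinct $\tau$-values) and define $K_n$ by tying the local knot $\#_{i} J_{n_i}$ into a fixed null-homotopic base knot $K_0$ in $Y$. Because each $J_n$ is topologically slice, each $K_n$ is topologically concordant to $K_0$, hence the $K_n$ are pairwise topologically concordant; this settles the topological half of the statement essentially immediately. For the smooth half, I would suppose for contradiction that $K_m$ and $K_n$ are smoothly almost concordant, meaning there is a local knot $L \in S^3$ with $K_m$ smoothly concordant in $Y \times [0,1]$ to $K_n$ with $L$ tied in.

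The heart of the proof is then the covering-link argument. I would pass to a suitable finite cyclic (or otherwise finite) cover $\wt Y \to Y$ adapted to the null-homotopic class, lift the smooth concordance in $Y \times [0,1]$ to a smooth concordance in $\wt Y \times [0,1]$ between the preimage links, and arrange via the embedding lemma that these preimage links sit inside an embedded $S^3$ (or a homology $S^3$ bounding a homology ball) so that the smooth concordance can be pushed into $S^3 \times [0,1]$ up to the controlled error coming from the ambient $4$--manifold. The behavior of $\tau$ under the relevant covering/satellite operations — using the crossing-change and genus-bound inequalities for $\tau$, together with its additivity under connected sum and its concordance invariance — would then force $\tau$ of the two covering images to coincide, contradicting the distinctness we built into the $J_n$.

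The step I expect to be the main obstacle is controlling the covering link precisely enough: lifting a concordance is formal, but ensuring that the lifted concordance lands in a product $S^3 \times [0,1]$ (rather than some more complicated $4$--manifold where $\tau$-type genus bounds fail, or where the local knot $L$ contributes uncontrolled $\tau$) is delicate, and this is exactly where I anticipate the Boden--Nagel embedding lemma and a careful choice of cover doing the heavy lifting. In particular I would need to verify that tying in the \emph{arbitrary} local knot $L$ (coming from the almost-concordance, not from our construction) either lifts to something $\tau$-neutral or contributes a controlled amount, so that the invariant extracted from the cover genuinely separates $K_m$ from $K_n$ independent of the adversarial choice of $L$; making the invariant insensitive to $L$ while remaining sensitive to our $J_n$ is the crux.
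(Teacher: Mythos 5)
There is a genuine gap, and it occurs at the very first step: your construction of the family. You propose to define $K_n$ by tying the \emph{local} knot $J_n$ into a fixed null homotopic base knot $K_0$, i.e.\ $K_n = K_0 \# J_n$. But connected sum with local knots is exactly the operation that almost concordance quotients out (Definition~\ref{def:local}): for any $m,n$ the knot $K_0\#J_n$ is obtained from $K_0\#J_m$ by acting with $[J_n\#-J_m]\in\CC^{\Diff}$, so all the knots in your family lie in a single orbit and are by definition mutually \emph{smoothly almost concordant}. No invariant can separate them, which is why the "crux" you flag at the end --- making the invariant insensitive to the adversarial local knot $L$ while remaining sensitive to your $J_n$ --- is not merely delicate but impossible for your family: $L$ and your $J_n$ enter the construction in exactly the same (local) way. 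This is precisely the point made in the paper's introduction that local examples are invisible to almost concordance, so genuinely non-local input is required.

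The paper's proof fixes this by inserting $J$ through a non-local pattern rather than by connected sum: the knot $K_J$ is the satellite of a homotopically essential curve $K\subset Y$ by the clasped pattern $P_J$ of Figure~\ref{fig:pattern1}, which is freely null homotopic in the solid torus (so $K_J$ is null homotopic in $Y$) but not isotopic into a ball. In the universal cover the lifts of $K_J$ form a chain in which each single component is unknotted, while any two adjacent components, after excising a ball and applying the Boden--Nagel embedding (Lemma~\ref{lem:boden-nagel}), form the Bing double $\BD(J)$ in $S^3$ (Proposition~\ref{prop:BD}); this needs an element of order greater than $2$ in $\pi_1(Y)$, which forces the case split of Proposition~\ref{prop:RP3} and a separate argument for $Y=\R P^3$ using a double branched cover producing $2J\#2J^r$. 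The unknottedness of single lifts is what neutralizes the adversarial local knot: in Proposition~\ref{prop:almostimpliesconc} a lifted local knot sits in a concordance between unknots, hence is smoothly slice in $S^3$ by Boden--Nagel, so smooth almost concordance of $K_J$ and $K_{J'}$ forces the full preimage links --- and hence the Bing doubles $\BD(J)$, $\BD(J')$ --- to be smoothly concordant. Finally, $\tau(J)$ is extracted from $\BD(J)$ not by additivity under the cover but by the Cha--Livingston--Ruberman trick: the double cover of $S^3$ branched over one component turns the other component into copies of $J\#J^r$, and a concordance of Bing doubles becomes a \emph{rational} concordance, on which $\tau$ is still defined. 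Taking $J = n\Wh^+(T_{2,3})$ (topologically slice, $\tau = n$) then gives topological concordance and smooth distinction simultaneously. Your proposal shares the paper's toolkit ($\tau$, Boden--Nagel, covering links, topologically slice Whitehead doubles) but is missing the two ideas that make it work: the non-local pattern producing Bing doubles in the cover, and the branched-cover/rational-concordance step that reads off $\tau(J)$ from them.
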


The free homotopy class of a knot is preserved under both concordance and almost concordance. As every knot in the infinite family of Theorem~\ref{thm:A} is null homotopic, it is natural to wonder what effect the homotopy class has on the disparity between the categories. In this direction, our second main result shows that, at least for all lens spaces $Y$, every homotopy class of (unbased) maps $S^1 \to Y$ exhibits the disparity.

\begin{theoremalph}\label{thm:B}
  Let $Y = L(p,q)$, for $\gcd(p,q)=1$ and $p>1$, and let $x \in [S^1, Y]$ be any homotopy class.   Then there exist topologically concordant knots $K$ and $K'$ in~$Y$ representing the class $x$, that are distinct in smooth almost concordance.
\end{theoremalph}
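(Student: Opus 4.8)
The plan is to manufacture a smooth almost-concordance invariant from the universal cover $S^3 \to L(p,q)$ together with the $\tau$ invariant, and then to realise two values of it by a satellite construction whose lift is topologically slice. Write $\pi_1(L(p,q)) = \Z/p$ and let $x = [k]$ have order $m = p/\gcd(k,p)$ in $\Z/p$. Given a knot $K$ in class $x$, its preimage $\widetilde{K} \subset S^3$ is a link of $\gcd(k,p)$ components, any two of which are interchanged by a deck transformation and hence are smoothly isotopic; I fix one component $\widetilde{K}_1$ and set $I(K) := \tau(\widetilde{K}_1) \bmod m \in \Z/m$. I first check that $I$ is a smooth almost-concordance invariant on the class $x$. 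A smooth concordance in $L(p,q)\times[0,1]$ lifts to a disjoint union of annuli in $S^3\times[0,1]$; since the free homotopy class is preserved along a concordance, $\widetilde{K}_1$ is smoothly concordant in $S^3\times[0,1]$ to a component of the lift of the other end, so $\tau(\widetilde{K}_1)$ is unchanged. Tying in a local knot $J$ inside a ball $B\subset L(p,q)$ lifts to tying $J$ into the arcs of $\widetilde{K}_1$ lying over $K\cap B$; as $\widetilde{K}_1$ covers $K$ with degree $m$, exactly $m$ copies of $J$ are introduced, so $\tau(\widetilde{K}_1)$ changes by $m\,\tau(J)$. Thus $I(K)$ is unchanged by both moves, and $I$ descends to smooth almost concordance.

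For $x = 0$ we have $m = 1$ and $I$ is vacuous, but in this case Theorem~\ref{thm:A} already supplies a pair of topologically concordant, smoothly almost-distinct knots. So I assume $x\neq 0$, whence $m\geq 2$. It remains to produce, in the class $x$, two topologically concordant knots $K$ and $K'$ with $I(K)\neq I(K')$. I take both to lie in a tubular neighbourhood $V\cong S^1\times D^2$ of the core $C$ of a Heegaard solid torus of $L(p,q)$, with $C$ representing a generator, and realise $x$ by winding number $k$. For the baseline knot I take a standard winding-number-$k$ curve $K$ in $V$, which can be chosen with unknotted lift components, so $I(K)=0$. For $K'$ I take a satellite $K' = P(C)$ with $P\subset V$ a winding-number-$k$ pattern, chosen so that, in the $p$-fold cyclic cover $\widetilde{V}$ of $V$ sitting inside $S^3$, the component $\widetilde{K}'_1$ of the lift is a prescribed knot $\kappa\subset S^3$. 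Here the embedding result \cite[Lemma 2.11]{Boden16} is what allows me to realise a chosen $\kappa$ as such a lifted pattern component.

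The key point is to choose $\kappa$ to be simultaneously topologically slice and to have $\tau(\kappa)=1$: for instance a positively-clasped untwisted Whitehead double of a knot with positive Seifert genus, which has trivial Alexander polynomial and, by the computation of $\tau$ for Whitehead doubles, satisfies $\tau=1$. Then $I(K') = 1 \neq 0 = I(K)$ in $\Z/m$, since $m\geq 2$, so $K$ and $K'$ are not smoothly almost concordant. Because $\kappa$ has trivial Alexander polynomial, Freedman's theorem makes it topologically slice in $S^3$; I then arrange this slicing to take place at the level of the pattern, that is, to exhibit $P$ as topologically concordant to the standard winding-number-$k$ pattern inside $V\times[0,1]$. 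Capping off with the fixed embedding $V\hookrightarrow L(p,q)$ yields a topological concordance from $K'=P(C)$ to $K$ in $L(p,q)\times[0,1]$, completing the construction.

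The main obstacle is the tension in the construction of $K'$: the modification must be \emph{global} enough that, after lifting, the single component $\widetilde{K}'_1$ acquires the knotting $\kappa$ exactly once---so that $\tau$ shifts by a unit rather than by a multiple of $m$---yet \emph{controlled} enough that it is undone by a \emph{topological} concordance downstairs. A purely local modification, or an infection along a null-homotopic curve, lifts $\Z/p$-symmetrically and changes $\tau(\widetilde{K}'_1)$ only by a multiple of $m$, so it cannot move $I$; this is precisely why the change is made through a winding-number-$k$ satellite pattern wound around the core. Verifying the last step---that the topological slice disk for $\kappa$ can be organised as a topological concordance of patterns in $V\times[0,1]$, equivalently as a $\Z/p$-equivariant topological concordance upstairs that descends---is the technical heart, and is where the interplay between Freedman's theorem and the covering geometry supplied by \cite[Lemma 2.11]{Boden16} must be handled with care.
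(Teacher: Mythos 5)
Your invariant $I(K)=\tau(\widetilde K_1)\bmod m$ is sound, and it is essentially the bookkeeping the paper sets up in Section~\ref{subsec:coveringlinks}: a concordance in $L(p,q)\times[0,1]$ lifts to annuli in $S^3\times[0,1]$, all components of a preimage have equal $\tau$ (they differ by deck transformations), and tying a local knot $J$ into $K$ turns each lift component into its connected sum with $m$ copies of $J$, so $\tau$ shifts by $m\tau(J)$. The fatal gap is the realisation step. Every lift component of a knot in the class $x$ is preserved setwise by the order-$m$ stabiliser inside the deck group, hence is a \emph{freely $m$-periodic} knot in $S^3$; since $m\geq 2$, you cannot prescribe it to be an arbitrary $\kappa$, and you never show your chosen $\kappa$ (a Whitehead double) admits a free period. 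The Boden--Nagel lemma gives no help here: it embeds a punctured universal cover into $S^3$ and says nothing about which knot types occur as lift components. You flag this tension yourself, but the winding-number-$k$ satellite construction does not escape it --- it is subject to exactly the same equivariance constraint. What such a construction actually produces (and this is the paper's Proposition~\ref{prop:lens_space_generic}) is a lift component of the form $(pJ)_{\ell/d,\,(p+\ell q)/d}$, the cable of the $p$-fold connected sum of the box knot $J$, with $d=\gcd(\ell,p)$; the topological concordance downstairs then comes for free by choosing $J$ topologically slice, with no need for the (unavailable) equivariant version of Freedman's theorem that your last paragraph appeals to.

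Once you restrict to what is actually realisable, your invariant is provably too weak to give the whole theorem. Computing $\tau$ of these cables by Van Cott's formula (Proposition~\ref{prop:vancottcable}), the difference in $\tau$ of the lifts of $K_+(-D,\ell)$ and $K_+(U,\ell)$ is $-\tfrac{\ell p}{d}+\bigl(\tfrac{\ell}{d}-1\bigr)\equiv \tfrac{\ell}{d}-1 \pmod{p/d}$, which vanishes whenever $\ell$ divides $p$; reversing orientation replaces $\ell$ by $p-\ell$, and both variants die exactly when $x$ has order $2$. This is why the paper needs two entirely separate arguments in that case: a $\tau$ slice-genus/band-move argument for $L(2n,q)$, $n>1$ (Proposition~\ref{prop:L2nqnotRP3}), and an $\Upsilon$-slope-parity argument for $\R P^3$ (Proposition~\ref{prop:genwhiteheaddoubles} and the proof preceding it). On the knots constructed there your invariant vanishes identically: in $L(2n,q)$ both covering links have unknotted components, and in $\R P^3$ the lifts are $(D\# D)_{3,11}$ and $T_{3,11}$ with $\tau=16$ and $\tau=10$, congruent mod $2$. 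A smaller but real error: your baseline claim that a standard winding-number-$k$ curve can be chosen with unknotted lift components is false in general --- by Lemma~\ref{lem:coveringlink-lens-spaces} its lifts are torus knots $T_{\ell/d,\,(pn+\ell q)/d}$, which are nontrivial unless the parameters are special (this is harmless only because one could instead compute the baseline $\tau$ directly).
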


A notable aspect of this theorem is the fact that the proof breaks up into four cases, each of which seems to require a different technique.  The first case, where $x$ is null homotopic, was covered by Theorem~\ref{thm:A}.  When~$Y\neq \R P^3$ and $x$ is homotopically essential, there are two cases: the case when~$x$ is order~$2$ and the case when the order is greater than~$2$. For these, different covering link arguments are needed, together with the $\tau$ invariant. Lastly, we give a completely distinct proof for the case of the nontrivial free homotopy class in $\mathbb{R}P^3$, using the $\Upsilon$ invariant of Ozsv{\'a}th-Stipsicz-Szab{\'o}~\cite{Ozsvath14}.

In the course of developing the arguments for proving Theorems~\ref{thm:A} and~\ref{thm:B}, we realised they could be adapted to work in an entirely topological context, in combination with the Levine-Tristram signatures, to prove the following theorem, the third main result of the article.

\begin{theoremalph}\label{thm:C}
  Let $Y = L(p,q)$, for $\gcd(p,q)=1$, $p>1$, and let $x \in [S^1, Y]$ be any homotopy class. Then there are infinitely many topological almost concordance classes representing the class $x$.
\end{theoremalph}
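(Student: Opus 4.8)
The plan is to detect topological almost concordance classes by the Levine--Tristram signatures of covering links in the universal cover, which is where the hypothesis $p>1$ will be used. Write $\pi \colon S^3 \to Y = L(p,q)$ for the universal cover, with deck group $\pi_1(Y) = \Z/p$, fix the homotopy class $x$, and let $d$ be the order of $x$. For a knot $K \subset Y$ representing $x$, its preimage $\tilde K := \pi^{-1}(K) \subset S^3$ is a link with $p/d$ components, each covering $K$ with degree $d$. I would attach to $K$ the averaged Levine--Tristram signature function $\sigma_\bullet(\tilde K) \colon S^1 \to \Z$. A topological concordance in $Y \times [0,1]$ lifts to an equivariant concordance of covering links in $S^3 \times [0,1]$ (its preimage), so since the averaged Levine--Tristram signature is a topological link-concordance invariant, $\sigma_\bullet(\tilde K)$ is a topological concordance invariant of $K$.

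Next I would compute the effect of the local knotting action. If $J \subset S^3$ is tied into $K$ inside a ball $B \subset Y$, then $B$ has $p$ disjoint preimages, and the single copy of $J$ lifts to one copy in each of the $p$ arcs of $\tilde K$ over $B$, that is, $d$ copies into each of the $p/d$ components. As connect-summing a link component with a knot adds that knot's signature to the link signature, this gives
\[
  \sigma_\bullet\big(\widetilde{K \# J}\big) = \sigma_\bullet(\tilde K) + p \, \sigma_\bullet(J).
\]
Therefore the class of $\sigma_\bullet(\tilde K)$ modulo $\{\, p\,\sigma_\bullet(J) : J \subset S^3 \,\}$ is a topological almost concordance invariant. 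The decisive structural point is that each function $p\,\sigma_\bullet(J)$ is a step function all of whose jumps lie in $2p\Z$, because the jumps of a knot signature function are even.

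It then suffices to produce, for the given class $x$, an infinite family $K_1, K_2, \dots$ representing $x$ whose covering links have signature functions with a jump of size $2$ at pairwise distinct points $\omega_n \in S^1$, and with no jump of the other family members at $\omega_n$. For $x$ a generator of $\Z/p$ the covering link is a single knot, and here I would use freely $p$--periodic knots: the torus knots $T(p, b)$ carry a free $\Z/p$ action and descend to knots in lens spaces representing a generator, while their signature functions have jumps of size $2$ at roots whose position varies with $b$. Choosing $b_1 < b_2 < \cdots$ suitably produces the required jumps at distinct points $\omega_n$. Since $2 \notin 2p\Z$ for $p>1$, the difference $\sigma_\bullet(\tilde K_n) - \sigma_\bullet(\tilde K_m)$ has a jump at $\omega_n$ that is not of the form $p\,\sigma_\bullet(J)$; hence $K_n$ and $K_m$ are not topologically almost concordant, and we obtain infinitely many classes.

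The main obstacle is the construction for a general homotopy class. A naive local modification of $K$ lifts $\Z/p$--equivariantly and so contributes $p$ identical copies in the cover, producing only jumps in $2p\Z$, precisely the ambiguity we have divided out; the features must instead interact with the topology of $Y$. For classes $x$ of order $1 < d < p$ the covering link has several components, and I expect the non-divisible jumps to come from the off-diagonal linking of these components rather than from self-knotting; for the null homotopic class one works with the $p$ linked lifts directly. Making these covering links land over the prescribed lens space $L(p,q)$ and in the prescribed class $x$, and computing their signatures via Litherland's satellite formula together with the covering-link analysis developed for Theorems~\ref{thm:A} and~\ref{thm:B}, is the technical heart of the argument; the topological invariance of the link signature and the equivariant lifting of locally flat concordances, though standard, must also be verified in the locally flat category.
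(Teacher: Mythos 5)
Your invariant-theoretic skeleton is essentially the paper's: lift to the universal cover, apply Levine--Tristram signatures, and exploit that the local-knotting ambiguity only contributes signature terms with a forced divisibility, so that a jump of size $2$ is invisible to it. Two technical differences are worth flagging. First, the paper never uses a whole-link ``summed'' signature: it passes to a \emph{single preferred component} of the covering link, which is a knot, so the classical concordance invariance and additivity of knot signatures suffice, and the ambiguity takes the form $\frac{p}{d}\,\sigma_\omega(J)$ with $d=\gcd(\ell,p)$ rather than your $p\,\sigma_\bullet(J)$; your version requires concordance invariance of link signatures, which (as you admit) you would still have to verify in the locally flat category. Second, the paper runs the contradiction on signature \emph{values} just past the first jump point rather than on matching jump locations, which is what makes the comparison between different members of the family painless.

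The genuine gap is that everything you defer as ``the technical heart'' is the actual content of the theorem, and your sketch of it does not work. Your family for the generator class, torus knots $T(p,b)$ with a free $\Z/p$--action, cannot be prescribed to descend to the \emph{given} $L(p,q)$: a free cyclic action on $S^3$ preserving $T(p,b)$ forces arithmetic relations among $p$, $q$ and $b$, so the quotients land in particular lens spaces and particular classes, not arbitrary ones; and for classes with $\gcd(\ell,p)>1$, where the covering link has several components, as well as for the null homotopic class, you construct nothing (``off-diagonal linking'' is a hope, not an argument). The paper fills exactly this gap by building the family \emph{downstairs} rather than upstairs: $K_{n,\ell}\subset L(p,q)$ is the $(\ell,n)$--torus pattern embedded in a solid-torus neighbourhood of the meridian of the $-p/q$--surgery curve (Figure~\ref{fig:downstairs}), for $n$ coprime to $\ell$. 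Lemma~\ref{lem:coveringlink-lens-spaces} identifies its covering link as $T_{\ell,pn}$, whose preferred component is the torus knot $T_{\ell/d,\,(pn+\ell q)/d}$, and Litherland's jump formula (Theorem~\ref{thm:litherland}) shows its first signature jump occurs at $t=\frac{d^2}{\ell(pn+\ell q)}$, strictly decreasing in $n$; evaluating Equation~(\ref{eq:2}) just past the first jump of the larger-$n$ knot gives $\frac{p}{d}\,\sigma_{\omega_0}(J)=\pm 2$, forcing $p=d$ and contradicting $\ell<p$. Finally, for the null homotopic class the paper does not use signatures at all but cites \cite[Theorem 1.5]{FNOP} -- consistent with your own observation that this case cannot be reached by your invariant as set up -- whereas your proposal offers no substitute for it.
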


It was conjectured that for \emph{any} pair $(Y,x)$, equal
neither to $(S^3,e)$ nor to~$(S^1 \times S^2,[S^1 \times \pt])$, there are infinitely many topological almost concordance classes representing $x$~\cite[Conjecture 1.3]{FNOP}. Theorem~\ref{thm:C} confirms this conjecture for lens spaces.

\subsection*{Context for our results}
It was first observed independently in unpublished work of Casson and Akbulut
that there are knots which are topologically,
but not smoothly concordant; cf.~\cite[pp.\ 499, 502]{MR976591}.
Topological concordances exhibiting this phenomenon are always constructed using the topological embedding results of Freedman \cite{MR666159,Freedman-Quinn:1990-1,Garoufalidis-Teichner}. The earliest smooth concordance obstructions that do not obstruct topological concordance used the work of Donaldson \cite{MR710056}, but more modern arguments often obstruct smooth concordance using invariants from Heegaard-Floer theory, such as the $\tau$ invariant \cite{MR2026543}.

The naive method for transferring such examples from $S^3$ to any $3$--manifold does work. Precisely, take $K$ and $K'$ in $S^3$ that are topologically but not smoothly concordant in $S^3$, and locally embed them in any $3$--manifold $Y$. We use the aforementioned Boden-Nagel embedding result to
prove the following in Section~\ref{subsec:coveringlinks}.

\medskip

\noindent {\bf Proposition~\ref{prop:CtoCYinj}.}\,\,\,{\it In both the smooth and topologically locally flat category, if two local knots are concordant in a closed, oriented, connected $3$--manifold $Y$, then they are concordant in $S^3$.}
\medskip

From this it is straightforward to argue that $K, K' \subset Y$ are topologically but not smoothly concordant in $Y$ (see Corollary~\ref{cor:smoothtopgeneral} for details). However, the results in this paper do not follow in this naive way from previous examples. Almost concordance can be thought of as concordance in $Y$ modulo concordance in $S^3$, so it is precisely local examples of this sort which are invisible to the coarser relation of almost concordance. Hence the infinite families of examples we construct in this paper are a genuinely new instance of the smooth versus\ topological concordance phenomenon.

The study of concordance modulo local knotting for \emph{links} began with Milnor's~$\overline{\mu}$ invariants for links in $S^3$ \cite{Milnor:1957-1}. Versions of the $\overline{\mu}$ invariants for \emph{knots} in more general $3$--manifolds were constructed in \cite{MR1354382}, \cite{MR2012959} and \cite{Heck:2011}. The use of covering link methods to study concordance of knots in general 3--manifolds was explored in \cite{MR0458425}. The connections between invariants of covering links and $\overline{\mu}$ invariants were investigated in \cite{MR521732} and \cite{MR827299}.

More recently, specifically \emph{smooth} almost concordance invariants were developed in work of Celoria \cite{Celoria16}. Rolfsen \cite{zbMATH03917275} showed that smooth almost concordance is the same as PL $I$--equivalence of knots, so recent work of A.\ Levine \cite{Levine16} can also be viewed through the lens of smooth almost concordance. Topological almost concordance was studied by three of the present authors together with Friedl~\cite{FNOP}.
One of the main open questions left by these works was that of the disparity between the categories, which we resolve here. We note that while Celoria built an infinite family of knots, which he distinguished in smooth almost concordance by a version of the $\tau$ invariant in lens spaces, the techniques of \cite{FNOP} show that the knots in Celoria's family are moreover distinguished in topological almost concordance. Indeed the lifts of these knots from lens spaces~$L(p,1)$ to $S^3$ can be distinguished by their sets of pairwise linking numbers. Hence the families of examples we construct in the present article are the first of their kind.

We close the introduction by highlighting the following challenge: for a 3--manifold $Y$ and for $x \in [S^1,Y]$ homotopically essential, extend our Theorem~\ref{thm:B} to find infinitely many smooth almost concordance classes within a given topological almost concordance class representing~$x$.

\subsection*{Acknowledgements}

The authors thank Arunima Ray for several helpful conversations. We are also grateful to the anonymous referees for the detailed and thoughtful suggestions. JP thanks the Universit\'e du Qu\'ebec \`a Montr\'eal for its hospitality. MN and PO were supported by CIRGET postdoctoral fellowships. MP was supported by an NSERC Discovery grant.

\section{Basic constructions, covering links, and the \texorpdfstring{$\tau$}{tau} invariant}\label{sec:basics}

Let $Y$ be an oriented $3$--manifold, and fix an orientation on $S^1$. An embedding~$L\colon \bigsqcup_mS^1\hookrightarrow Y$, considered up to ambient isotopy, is called an \emph{$m$--component link} and a 1--component link is called a \emph{knot}. We will sometimes write $L\subset Y$ as a shorthand for a link. A knot in~$Y$ is called the \emph{unknot} if it bounds a $2$--disc in~$Y$. A knot~$K\subset Y$ is called \emph{local} if it lies inside a $3$--ball $D^3\subset Y$.
Given a knot in~$S^3$ construct a knot in~$Y$, by choosing a $3$--ball in $S^3$ that contains the knot, and then embedding this $3$--ball into $Y$ via an orientation preserving embedding.
We next show that this inclusion of a knot in $S^3$ as local knot in $Y$ gives a one-to-one correspondence  (up to respective ambient isotopy).

\begin{lemma}
Let $Y$ be an orientable $3$--manifold, and let $K$ and $J$ be two
knots contained in a $3$-ball~$K,J \subset D^3 \subset Y$.
Then the knots~$K$, $J$ are isotopic in $Y$ if and only if they are isotopic in
the $3$--ball~$D^3$.
\end{lemma}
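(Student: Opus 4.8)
The plan is to treat the two implications separately, with essentially all of the content lying in the forward direction. For the reverse implication, suppose $K$ and $J$ are isotopic inside $D^3$; since $D^3\subset Y$, I would simply extend the ambient isotopy of $D^3$ by the identity on $Y\sm D^3$ (interpolating across a collar of $\partial D^3$) to obtain an ambient isotopy of $Y$ carrying $K$ to $J$.

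For the forward implication, let $H\colon Y\times[0,1]\to Y$ be an ambient isotopy with $H_0=\Id_Y$ and $H_1(K)=J$. Restricting $H_1$ to the ball gives an orientation-preserving homeomorphism of pairs $(D^3,K)\xrightarrow{\cong}(H_1(D^3),J)$. The key reduction is therefore to show that the homeomorphism type of the pair $(B,J)$, for an embedded ball $B$ containing $J$ in its interior, does not depend on the choice of $B$; granting this, I obtain a homeomorphism $(D^3,K)\cong(H_1(D^3),J)\cong(D^3,J)$ and it remains only to upgrade this homeomorphism of pairs into an ambient isotopy inside $D^3$.

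For that last upgrade I would use that any orientation-preserving self-homeomorphism $f$ of $D^3$ is isotopic to the identity: isotope $f|_{\partial D^3}$ to $\Id_{S^2}$ through $\operatorname{Homeo}^+(S^2)$, push the isotopy inward along a collar to reduce to $f|_{\partial D^3}=\Id$, and finish with Alexander's trick (respectively Cerf's theorem $\Gamma_4=0$ in the smooth category). Applying the resulting isotopy $f_s$ with $f_0=\Id$ and $f_1=f$ to $K$ produces an ambient isotopy $f_s(K)\subset D^3$ from $K$ to $f(K)=J$, as required.

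The main obstacle is the ball-independence statement: if $J\subset\operatorname{int}(B_1)\cap\operatorname{int}(B_2)$ for two embedded balls $B_1,B_2\subset Y$, then $(B_1,J)\cong(B_2,J)$. One cannot argue naively by producing a common smaller ball around $J$ inside $B_1\cap B_2$, because $J$ may be homotopically essential in the open set $\operatorname{int}(B_1)\cap\operatorname{int}(B_2)$ (for instance when this intersection is only a thin solid-torus neighborhood of $J$), so no single ball in the intersection can swallow all of $J$. Instead I would fix a tubular neighborhood $N$ of $J$ with $N\subset\operatorname{int}(B_1)\cap\operatorname{int}(B_2)$ and work in the exterior $X=Y\sm\operatorname{int}(N)$. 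There the spheres $\partial B_1$ and $\partial B_2$ each cut off, on the side away from $N$, a punctured copy of $Y$, and the disc theorem identifies these two punctured copies. I would then isotope $\partial B_1$ to $\partial B_2$ within $X$ rel $\partial N$ — this is where the standard $3$--manifold input (uniqueness of embedded balls, together with Alexander's theorem on spheres) enters — so that the ambient isotopy of $X$ fixes $N$ pointwise and carries $B_1\sm\operatorname{int}(N)$ onto $B_2\sm\operatorname{int}(N)$. Extending across $N$ by the identity yields a homeomorphism of pairs $(B_1,J)\cong(B_2,J)$ fixing $J$, completing the reduction. I expect the rel-$\partial N$ isotopy of the two boundary spheres to be the technical heart of the whole argument.
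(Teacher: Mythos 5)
Your reverse direction and your final step (reducing to an orientation\nobreakdash-preserving self-homeomorphism of $D^3$ and trivialising it via connectivity of $\operatorname{Homeo}^+(S^2)$ and the Alexander trick, respectively Cerf's theorem in the smooth category) are fine, and the latter is exactly how the paper finishes. The genuine gap is the ``ball-independence'' step, which you rightly call the technical heart but then defer to tools that do not prove it. You claim $\partial B_1$ can be isotoped to $\partial B_2$ inside $X=Y\sm\operatorname{int}(N)$ by an ambient isotopy fixing $\partial N$, citing uniqueness of embedded balls and Alexander's theorem on spheres. Neither applies: in a general orientable $Y$ the two sides of $\partial B_i$ in $X$ are a punctured copy of $Y$ and a knot exterior inside a ball; neither is a ball, so the disc theorem says nothing about these spheres, and the fact that the complementary pieces of $\partial B_1$ and $\partial B_2$ are abstractly homeomorphic never implies that two spheres are isotopic. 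Worse, the statement you need is essentially a restatement of the lemma: an isotopy of $X$ rel $\partial N$ carrying $B_1\sm\operatorname{int}(N)$ to $B_2\sm\operatorname{int}(N)$ extends by the identity over $N$ to a homeomorphism of pairs $(B_1,J)\cong(B_2,J)$, which is precisely the well-definedness of the local knot type that is at stake --- if $J$ were knotted in $B_1$ but unknotted in $B_2$, no such sphere isotopy could exist. So the difficulty has been relocated, not resolved. (Your argument does close up in the special case $Y=S^3$: there each complementary piece $Y\sm\operatorname{int}(B_i)$ is a ball disjoint from $N$, so one can shrink $\partial B_i$ through that ball to a tiny sphere and slide one tiny sphere to the other inside $X$; but the lemma is needed for every orientable $Y$, and that is exactly where your cited input fails.)

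There is also a structural difference from the paper which explains why it does not fall into this trap: you discard the isotopy $H_t$ and keep only its time-one homeomorphism $H_1$, after which you are forced to compare two essentially arbitrary balls containing $J$. The paper instead treats the given ambient isotopy $\rho_t$ as the object to be improved: it composes $\rho_t$ with a scaling isotopy in the image ball $\rho_1(D^3)$ so that $\rho_1(D^3)\subset D^3$ with $\rho_1(K)$ isotopic to $J$ in $D^3$, then uses the Schoenflies theorem ($D^3\sm\operatorname{Int}\rho_1(D^3)$ is a product $S^2\times I$) to arrange $\rho_1(D^3)=D^3$, absorbs the remaining isotopy between $\rho_1(K)$ and $J$ into an ambient isotopy supported in the interior of $D^3$ so that $\rho_1(K)=J$, and only then invokes path-connectivity of $\operatorname{Diff}^+(D^3)$. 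To salvage your outline you would have to either genuinely prove the sphere-isotopy claim in $X$ rel $\partial N$ (this needs real $3$--manifold input, of uniqueness-of-connected-sum-decomposition type, not the disc theorem), or follow the paper in manipulating the isotopy itself rather than only its endpoint.
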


\begin{proof}
First, we show that if $K$ and $J$ are isotopic in $Y$ then they are
also isotopic in $D^3$.
Denote by $\rho_t \colon Y \to Y$ an ambient isotopy from $K$ to $J$ in $Y$.
By scaling in~$\rho_1(D^3)$, we may assume that $\rho_1(D^3)  \subset D^3$,
and $\rho_1(K)$ is isotopic to $J$ in~$D^3$. Refine~$\rho_t$ even further:
by the Schoenflies theorem~$D^3 \setminus \operatorname{Int} \rho_1(D^3)$ is an annulus~$S^2 \times I$,
and thus arrange that~$\rho_1(D^3) = D^3$, while still
preserving that $\rho_1(K)$ is isotopic to $J$ in $D^3$. The isotopy
between $\rho_1(K)$ and $J$ can be extended to an ambient isotopy that
is supported entirely in the interior of $D^3$. Thus change~$\rho_t$ one
last time to achieve that $\rho_1(D^3) = D^3$,
and $\rho_1(K) = J$.

Forget about the ambient manifold~$Y$, and consider
the restriction~$\rho_1 \colon D^3 \to D^3$. By assumption this
is an orientation preserving diffeomorphism~$\rho_1 \in \operatorname{Diff}^+(D^3)$.
What is left  to prove is that~$K$ is isotopic to $\rho_1(K) = J$ in $D^3$, which follows from
the fact that the space~$\operatorname{Diff}^+(D^3)$ is path-connected~\cite{Cerf68}.

The other direction is immediate as an isotopy between $J$ and $K$ in $D^3$ embeds to an isotopy in $Y$.
\end{proof}

For a knot $J\subset S^3$ and~$n\in\Z_{>0}$,
we introduce the notation\[nJ:=\underbrace{J\#\dots\# J}_{n}.\]

For either $\CAT=\Diff$ or $\Top$, we define two links $L_0, L_1\subset Y$ to be \emph{$\CAT$--concordant} if
\begin{enumerate}[leftmargin=*]
  \item $\CAT=\Diff$: they are \emph{smoothly concordant}, that is, if there is a smooth embedding $\bigsqcup_mS^1 \times I \hookrightarrow Y \times I$ with the image of $\bigsqcup_mS^1 \times \{i\}$  in  $Y \times \{i\}$ and equal to $L_i$, for $i=0,1$.
     \item $\CAT = \Top$: they are \emph{topologically concordant}, that is, if there is a locally flat embedding $\bigsqcup_mS^1 \times I \hookrightarrow Y \times I$ with the image of $\bigsqcup_mS^1 \times \{i\}$  in  $Y \times \{i\}$ and equal to $L_i$, for $i=0,1$.
\end{enumerate}

Write $[U,V]$ for the set of free (i.e.\ unbased) homotopy classes of maps $U\to V$ between path connected topological spaces $U,V$, and write $e\in[U,V]$ for the class of the constant map. For a $3$--manifold $Y$ and a class $x \in [S^1,Y]$, define the concordance set $\CC^{\CAT}_x(Y)$ to be the set of knots $S^1 \subset Y$ representing the class $x$, up to $\CAT$--concordance. Write $\CC^{\CAT}:=\CC^{\CAT}_e(S^3)$ for the concordance group of knots in $S^3$. The connected sum of pairs $(S^3,J)\# (Y,K)$ defines a new knot $(Y,J\#K)$, which is freely homotopic to $K$ in~$Y$, since all knots in $S^3$ are freely null homotopic.

\begin{definition}\label{def:local}
For either $\CAT=\Diff$ or $\Top$, and for each pair $(Y,x)$, the \emph{action of local knots} of the concordance group~$\mathcal{C}^{\CAT}$
on the set~$\mathcal{C}^{\CAT}_x(Y)$ is
defined by
\[\mathcal{C}^{\CAT}\times\mathcal{C}^{\CAT}_x(Y)\to \mathcal{C}^{\CAT}_x(Y)\qquad
([J],[K])\mapsto [J\# K].\] We say that two knots in $\CC_x^{\Diff}(Y)$ are \emph{smoothly almost concordant} if they lie in the same orbit of the action of local knots, and we say that two knots in $\CC_x^{\Top}(Y)$ are \emph{topologically almost concordant} if they lie in the same orbit of this action of local knots.
\end{definition}

\begin{convention} Throughout this paper, in link diagrams, whenever a tangle is drawn from left to right of the page, it is implicit that the diagram should be understood as the closure of that tangle by the trivial braid.
\end{convention}

\subsection{Basic constructions}\label{subsec:lensspaces}

Recall the following construction of the lens space~$L(p,q)$ for $p > 1$ and
$1 \leq q < p$ coprime to $p$. Write
\[ S^3 = \Big\{ (z_1, z_2) \in \C^2 : |z_1|^2 + |z_2|^2 = 1 \Big\}. \]
The solid torus~$V = \{ (z_1, z_2) \in S^3 \mid |z_2|^2 \leq \frac{1}{2} \}$
and its complement give
a genus~$1$ Heegaard splitting of $S^3$. Define the map
\[ f\colon S^3\to S^3;\quad (z_1, z_2) \mapsto \big(z_1 \exp(2\pi i /p), z_2 \exp(2\pi i q/p)\big).\]
The $3$--manifold~$L(p,q)$ is obtained as the quotient space $S^3/\sim$ using the equivalence relation $z\sim f(z)$. Note that $f(V)=V$, and so we can consider the quotient~$V/\sim$.	
The boundary~$\partial V = S^1 \times S^1$
inherits its product structure from $\C^2$. The second factor bounds a disc in $V$ and
the first factor bounds in the complement~$V^c$ of $V$. Identify
$V$ with $S^1 \times D^2$ by sending $(z_1, z_2)\in V$ to $(z_1 / |z_1|, z_2\sqrt{2}  \big)$.
Under this identification, the quotient map $g\colon V\to V/\sim$ is given by
\[g \colon V=S^1 \times D^2 \to S^1 \times D^2;\quad (t,z) \mapsto (t^p , t^{-q} z).\]
We see that the boundary of a meridional disc of $V^c$ is mapped to a curve in
$\partial \left(V/\sim\right) = S^1\times S^1$ that winds $p$ times around the first $S^1$ factor and
$-q$ times around the second $S^1$ factor; in other words, to a curve of slope~$-p/q$.
The orientations are inherited naturally from $\C$, explaining the minus sign on $q$.

\begin{figure}[h]

\begin{tikzpicture}[scale=0.95]

\begin{scope}[shift={(2.5,3.7)}]

\begin{scope}[xscale=0.25, yscale=0.25, shift={(25.5,3.75)}]
\draw[thick] (-0.5, 0.5) -- (0,0);
\draw[thick] (0.5, 0.5) -- (0,0);
\end{scope}

\begin{knot}[
clip radius=7pt,
clip width=5,
]

\strand[black,thick] (2,2) ..  controls +(0,0) and +(-0,0) .. (7,2);

\strand[black,thick] (2,1.75) ..  controls +(0,0) and +(-0,0) .. (7,1.75);

\strand[black,thick] (2,1.5) ..  controls +(0,0) and +(-0,0) .. (7,1.5);

\strand[black,thick] (2,0.25) ..  controls +(0,0) and +(-0,0) .. (7,0.25);

\strand[black,thick] (2,0) ..  controls +(0,0) and +(-0,0) ..  (7,0);

\strand[black,thick] (6,2.5) .. controls +(0.5,0) and +(0.5,0) .. (6,-0.5) .. controls +(-0.5,0) and +(-0.5,0) .. (6,2.5);

\flipcrossings{1,3,5,7,9}

\end{knot}

\node at (6.4,-0.7) {$-p/q$};
\node at (2.1,1) {$\vdots$};
\node at (6.9,1) {$\vdots$};

\draw [thick, draw=black, fill=white](3.7,2.2)
to [out=right, in=right, looseness=1.5] (3.7,-0.2)
to [out=left, in=left, looseness=1.5] (3.7,2.2);
\node[rotate=0] at (3.7,1) {$P$};

\draw [decorate,decoration={brace,amplitude=6pt,raise=8pt}] (1.8,-0.1) -- (1.8,2.1);
\node at (0.95,1) {$\ell$};

\end{scope}

\begin{scope}[shift={(0,0)}]\begin{knot}[
clip radius=7pt,
clip width=5,
]

\strand[black,thick] (2,2) ..  controls +(0,0) and +(-0,0) .. (7.5,2);

\strand[black,thick] (2,1.75) ..  controls +(0,0) and +(-0,0) .. (7.5,1.75);

\strand[black,thick] (2,1.5) ..  controls +(0,0) and +(-0,0) .. (7.5,1.5);

\strand[black,thick] (2,0.25) ..  controls +(0,0) and +(-0,0) .. (7.5,0.25);

\strand[black,thick] (2,0) ..  controls +(0,0) and +(-0,0) ..  (7.5,0);

\flipcrossings{13,11,9,7,5}

\end{knot}

\node at (2.1,1) {$\vdots$};
\node at (7.95,0.9) {$\cdots$};

\draw [thick, draw=black, fill=white](3.5,2.2)
to [out=right, in=right, looseness=1.5] (3.5,-0.2)
to [out=left, in=left, looseness=1.5] (3.5,2.2);
\node[rotate=0] at (3.5,1) {$P$};

\draw [thick, draw=black, fill=white](6,2.2)
to [out=right, in=right, looseness=1.5] (6,-0.2)
to [out=left, in=left, looseness=1.5] (6,2.2);
\node[rotate=0] at (6,1) {$P$};

\draw [decorate,decoration={brace,amplitude=6pt,raise=8pt}] (1.8,-0.1) -- (1.8,2.1);
\node at (0.95,1) {$\ell$};

\end{scope}

\begin{scope}[shift={(6.1,0)}]\begin{knot}[
clip radius=7pt,
clip width=5,
]

\strand[black,thick] (2.2,2) ..  controls +(0,0) and +(-0,0) .. (7,2);

\strand[black,thick] (2.2,1.75) ..  controls +(0,0) and +(-0,0) .. (7,1.75);

\strand[black,thick] (2.2,1.5) ..  controls +(0,0) and +(-0,0) .. (7,1.5);

\strand[black,thick] (2.2,0.25) ..  controls +(0,0) and +(-0,0) .. (7,0.25);

\strand[black,thick] (2.2,0) ..  controls +(0,0) and +(-0,0) ..  (7,0);

\flipcrossings{}

\end{knot}

\node at (6.9,1) {$\vdots$};

\draw [thick, draw=black, fill=white](3.7,2.2)
to [out=right, in=right, looseness=1.5] (3.7,-0.2)
to [out=left, in=left, looseness=1.5] (3.7,2.2);
\node[rotate=0] at (3.7,1) {$P$};

\draw [thick, draw=black, fill=white]
       (5.5, -0.2) -- (6.5,-0.2) -- (6.5,2.2) -- (5.5,2.2) -- cycle;
\node[rotate=0] at (6,1) {$q$};

\end{scope}

\draw [decorate,decoration={brace,amplitude=6pt,raise=8pt}]  (10.8,-0.2) -- (2.5,-0.2);
\node at (6.65,-1.1) {$p$};

\end{tikzpicture}
\caption{A general link~$L$ in~$L(p,q)$ represented using a tangle~$P$, as indicated. Below is its covering link~$\wt L$ in the universal cover~$S^3$. The $q$--box denotes full twists involving all strands; the twists are right-handed with respect to the left-to-right direction.}
\label{fig:CoveredLink}
\end{figure}
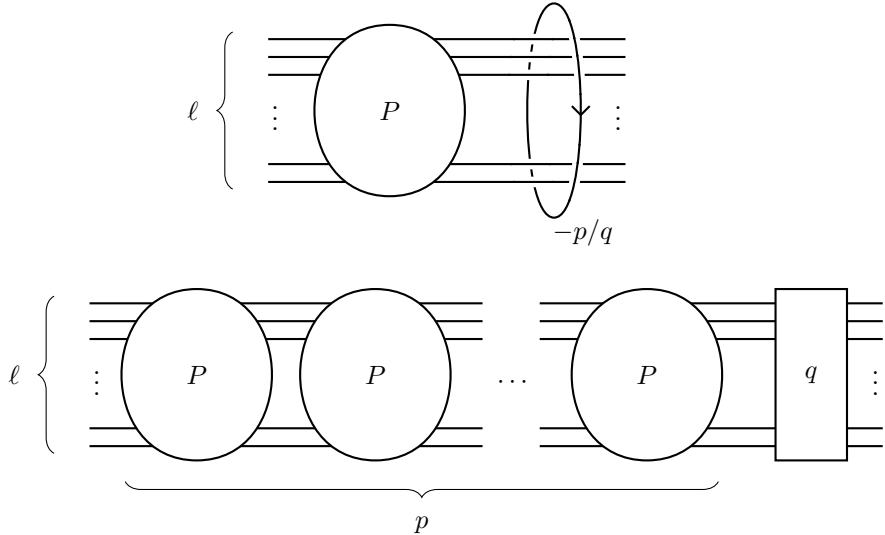

\begin{lemma}\label{lem:coveringlink-lens-spaces}
Let $L\subset L(p,q)$ and $\wt L\subset S^3$ be the links depicted in Figure~\ref{fig:CoveredLink}. Then the link~$\wt L$ is the covering link of $L$ in the universal cover $S^3$. If $L$ is moreover a knot, with each strand in Figure~\ref{fig:CoveredLink} oriented from left to right, then $\wt L$ is a $\gcd(\ell,p)$--component link.
\end{lemma}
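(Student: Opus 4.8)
The plan is to count the components of $\widetilde L$ using the fact, supplied by the first part of the lemma, that $\widetilde L = \pi^{-1}(L)$ is the full preimage of $L$ under the universal covering $\pi\colon S^3 \to L(p,q)$, which is a regular covering with deck group $\mathbb{Z}/p \cong \pi_1(L(p,q))$. For a connected knot $L$, elementary covering space theory identifies the components of $\pi^{-1}(L)$ with the orbits of the monodromy action of $\pi_1(S^1)$ on a fiber, i.e.\ with the cosets $(\mathbb{Z}/p)/\langle h\rangle$, where $h \in \mathbb{Z}/p$ is the image of a generator of $\pi_1(S^1)$ under $\pi_1(S^1)\to \pi_1(L(p,q)) \to \mathbb{Z}/p$. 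Since $\mathbb{Z}/p$ is abelian, $h$ is exactly the class $[L] \in H_1(L(p,q)) \cong \mathbb{Z}/p$. The number of components is then the index $[\mathbb{Z}/p : \langle [L]\rangle] = \gcd([L], p)$, so the whole problem reduces to showing $[L] \equiv \ell \pmod p$.

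To compute $[L]$, I would use that $L$ is contained in the solid torus $V/\!\sim$ of the Heegaard splitting, whose core circle carries a generator of $H_1(L(p,q)) \cong \mathbb{Z}/p$. The class $[L]$ is then the image under $H_1(V/\!\sim) \cong \mathbb{Z} \to H_1(L(p,q)) \cong \mathbb{Z}/p$ of the winding number of $L$ in this solid torus, i.e.\ of its algebraic intersection number with a meridian disc. By hypothesis all $\ell$ strands of the tangle $P$ are oriented from left to right, so they meet a meridian disc $\ell$ times, all with the same sign; hence the winding number is $\ell$ and $[L] = \ell \in \mathbb{Z}/p$. Combined with the previous paragraph this yields $\gcd(\ell,p)$ components.

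As a concrete cross-check that works directly from Figure~\ref{fig:CoveredLink}, I would instead read off the component count as the number of cycles of the strand permutation of $\widetilde L$. The covering link is the closure of $p$ consecutive copies of the $\ell$--strand tangle $P$ followed by the $q$--box of full twists. A full twist on $\ell$ strands is a pure braid and so induces the identity permutation; thus the total permutation is $\sigma^{p}$, where $\sigma \in S_\ell$ is the permutation induced by a single copy of $P$. The hypothesis that $L$ is a knot forces the closure of $P$ to be connected, so $\sigma$ is an $\ell$--cycle, and $\sigma^{p}$ then decomposes into exactly $\gcd(\ell,p)$ cycles, each of length $\ell/\gcd(\ell,p)$. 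This again gives $\gcd(\ell,p)$ components.

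I expect the main obstacle to be bookkeeping of conventions rather than any deep difficulty: one must confirm that the universal cover restricts on $V/\!\sim$ to the declared $p$--fold cyclic cover, so that $\widetilde L \cap V$ is really the $p$--copies--of--$P$ picture; check that the orientation conventions make the $\ell$ intersection signs coincide, so that the winding number is $+\ell$ rather than a cancelling sum; and verify that the $q$--box contributes trivially to the permutation. Each of these is routine but needs care to ensure the two computations agree and genuinely produce $\gcd(\ell,p)$.
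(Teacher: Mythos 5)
Your component count is correct, and your primary argument takes a genuinely different route from the paper's. The paper proves the second statement combinatorially: it reads the tangle $P$ as a permutation $\alpha_L\in S_\ell$, which is an $\ell$--cycle because $L$ is connected, observes that the $q$--box induces the identity permutation, and uses the fact that the $p$--th power of an $\ell$--cycle splits into $\gcd(\ell,p)$ disjoint cycles --- this is exactly your ``cross-check''. Your monodromy argument (components of $\pi^{-1}(L)$ correspond to cosets of $\langle [L]\rangle$ in $\Z/p$, with $[L]=\ell$ computed from the winding number) is more conceptual and has the merit of not depending on the diagram of $\widetilde L$ at all; on the other hand, the paper's permutation count also yields the cycle lengths $\ell/\gcd(\ell,p)$, i.e.\ the covering degree on each component, which is information the paper uses later (for instance when identifying single components of covering links as specific torus knots).

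There is, however, a genuine gap: the first assertion of the lemma --- that the link $\widetilde L$ depicted in Figure~\ref{fig:CoveredLink}, \emph{including its $q$--box of full twists}, is the covering link of $L$ --- is never proved in your proposal; you defer it as bookkeeping, and your sketch of the verification is not quite the right statement to check. The restriction of the universal cover to the solid torus containing $L$ is not the untwisted $p$--fold cyclic cover $(t,z)\mapsto(t^p,z)$: as computed in Section~\ref{subsec:lensspaces}, it is $g(t,z)=(t^p,t^{-q}z)$. The paper's proof factors $g$ as the diffeomorphism $\phi(t,z)=(t,t^{-q}z)$ followed by the standard $p$--fold cover; the preimage of $L$ under the standard cover is $P$ stacked $p$ times, and then applying $\phi^{-1}(t,z)=(t,t^{q}z)$ inserts precisely the $q$ full twists of the $q$--box. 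So $\widetilde L$ is \emph{not} ``the $p$--copies--of--$P$ picture'': the $q$--box is an essential feature of the covering link. Your component count is insensitive to this (full twists are pure braids, and $\gcd(\ell,p+\ell q)=\gcd(\ell,p)$), but elsewhere in the paper the twists change the answer --- e.g.\ single components of covering links are the torus knots $T_{\ell/d,\,(pn+\ell q)/d}$, where the $\ell q$ term comes exactly from the $q$--box. To complete the proof of the lemma you must supply this identification of the covering map over the solid torus and its factorization, or some equivalent argument.
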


\begin{proof}
The link $L$ is contained in the solid torus $U:=S^1\times D^2\subset L(p,q)$, which is the exterior of a neighbourhood of the $-p/q$ surgery curve. The covering map~$g \colon V \to U$, from above, is the composition of the maps
\begin{alignat*}{2}
g \colon S^1 \times D^2 &\xrightarrow{\phi}& S^1 \times D^2 &\to S^1 \times D^2\\
(t,z) &\mapsto &(t, t^{-q} z)&\\
&&(t,z) &\mapsto (t^p, z).
\end{alignat*}
The map~$\phi$ is a diffeomorphism with inverse $\phi^{-1}(t,z) = (t, t^{q}z)$.
The preimage of the second map is obtained by taking the tangle $P$ and stacking it $p$ times.

The second statement is obtained from the first by a counting argument. By viewing the tangle $P$ as a permutation on the strands as we pass through it from left to right in the diagram, define an element $\alpha_L\in S_{\ell}$ the symmetric group on~$\ell$ letters. Because $L$ is now assumed to be knot, $\alpha_L$ is an $\ell$--cycle as it has order $\ell$. Using the first part of the lemma, and noting that the $q$--box does not permute the strands, we must now count how many disjoint cycles there are in $(\alpha_L)^p$. But it is an elementary exercise in modular arithmetic that a length $\ell$ cycle raised to the power~$p$ decomposes into $\gcd(\ell, p)$ disjoint cycles, each of length~$\ell/\gcd(\ell, p)$.
\end{proof}

\noindent We will frequently use the following knots and links.

\begin{definition}For a knot $J\subset S^3$ and $r,s\in \Z$, the \emph{$(r,s)$ cable of $J$} is the link~$J_{r,s}$ in $S^3$ on the boundary of the $0$--framed closed tubular neighbourhood of $J$ that winds $r$ times around the longitudinal direction and $s$ times around the meridional direction. The \emph{$(r,s)$ torus link} $T_{r,s}$ in $S^3$ is the $(r,s)$ cable of the unknot.
\end{definition}

Note that for any $J\subset S^3$ and any $r,s\in \Z$, the number of components of $J_{r,s}$ is precisely $d:=\gcd(r,s)$, and each component knot is a copy of $J_{\frac{r}{d},\frac{s}{d}}$. The Alexander polynomial of the torus knot $T_{r,s}$ is given by~\cite[p. 119]{Lickorish97}
\begin{equation}\label{eq:1}\Delta_{T_{r,s}}(t)=\frac{(t^{rs}-1)(t-1)}{(t^r-1)(t^s-1)}.\end{equation}
For a knot $J\subset S^3$, the \emph{Seifert genus} $g_3(J)$ denotes the minimal integer $g$ for which~$J$ bounds an embedded oriented genus $g$ surface in $S^3$. For $r,s>0$ coprime, the Seifert genus of the $(r,s)$ torus knot is $(r-1)(s-1)/2$, which follows from
Seifert's inequality~\cite[Proposition 6.13]{Lickorish97}.

\begin{definition}\label{def:Jbox}
For any $n>0$ and a knot $J\subset S^3$, the \emph{$J$--box} is the tangle diagram such that the diagram in Figure \ref{fig:Jbox} is the $(n,0)$ cable of $J$, where the strands are oriented from left to right.
\begin{figure}[h]

\begin{tikzpicture}[scale=.6]

\begin{scope}[shift={(2,3.7)}]\begin{knot}[
clip radius=7pt,
clip width=5,
]

\strand[black,thick] (2,2) ..  controls +(0,0) and +(-0,0) .. (7,2) ;

\strand[black,thick] (2,1.75) ..  controls +(0,0) and +(-0,0) .. (7,1.75) ;

\strand[black,thick] (2,1.5) ..  controls +(0,0) and +(-0,0) .. (7,1.5) ;

\strand[black,thick] (2,0.25) ..  controls +(0,0) and +(-0,0) .. (7,0.25) ;

\strand[black,thick] (2,0) ..  controls +(0,0) and +(-0,0) ..  (7,0);

\flipcrossings{}

\end{knot}

\node at (2.1,1) {$\vdots$};
\node at (6.9,1) {$\vdots$};

\draw [thick, draw=black, fill=white]
       (3.5, -0.2) -- (5.5,-0.2) -- (5.5,2.2) -- (3.5,2.2) -- cycle;
\node[rotate=0] at (4.5,1) {$J$};

\draw [decorate,decoration={brace,amplitude=6pt,raise=8pt}] (1.8,-0.1) -- (1.8,2.1);
\node at (0.6,1) {$n$};

\end{scope}

\end{tikzpicture}
\caption{The $J$--box is the tangle such that this diagram is the $(n,0)$ cable of $J$, where the strands are oriented from left to right.}
\label{fig:Jbox}
\end{figure}
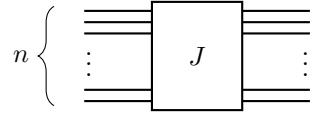
\end{definition}

\begin{definition}Given a knot $J\subset S^3$, the \emph{Bing double} $\BD(J)$ is the 2--component link in $S^3$ depicted in Figure~\ref{fig:BDWh}~(a). The \emph{positive Whitehead double} $\Wh^+(J)$ is the knot depicted in Figure~\ref{fig:BDWh}~(b). The \emph{negative Whitehead double} $\Wh^-(J)$ is the knot depicted in Figure~\ref{fig:BDWh}~(c).
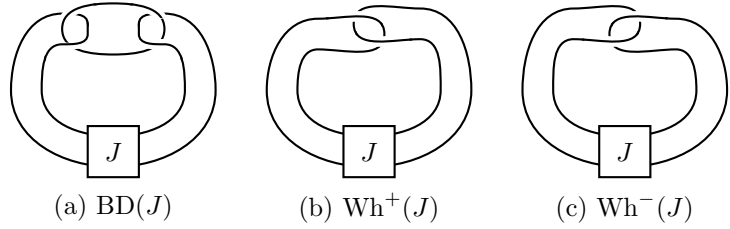
\begin{figure}[h]
\begin{tikzpicture}[scale=0.68]
\begin{knot}[
clip width=5,
]

\strand [black, thick] (0,0)
to [out=right, in=down] (2,1.5)
to [out=up, in=right] (1,3)
to [out=left, in=up] (0.5,2.8)
to [out=down, in=up] (0.5,2.6)
to [out=down, in=left] (1,2.4)
to [out=right, in=up] (1.4,1.5)
to [out=down, in=right] (0,0.6)
to [out=left, in=down] (-1.4,1.5)
to [out=up, in=left] (-1,2.4)
to [out=right, in=down] (-0.5,2.6)
to [out=up, in=down] (-0.5,2.8)
to [out=up, in=right] (-1,3)
to [out=left, in=up] (-2,1.5)
to [out=down, in=left] (0,0)
;

\strand [black, thick] (0,3.2)
to [out=right, in=up] (1,2.7)
to [out=down, in=right] (0,2.2)
to [out=left, in=down] (-1,2.7)
to [out=up, in=left] (0,3.2)
;

\flipcrossings{1,4}
\end{knot}


\begin{scope}[shift={(-0.5,-0.2)}]
\draw [thick, draw=black, fill=white]
       (0,0) -- (0,1) -- (1,1) -- (1,0) -- cycle;
\node at (0.5,0.5) {$J$};
\end{scope}

\begin{scope}[shift={(5,0)}]\begin{knot}[
clip width=5,
]

\strand [black, thick] (0,0)
to [out=right, in=down] (2,1.5)
to [out=up, in=right] (1,3.1)
to [out=left, in=up] (-0.3,2.8)
to [out=down, in=left] (1,2.5)
to [out=right, in=up] (1.4,1.5)
to [out=down, in=right] (0,0.6);
\strand [black,thick] (0,0.6)
to [out=left, in=down] (-1.4,1.5)
to [out=up, in=left] (-1,2.3)
to [out=right, in=down] (0.3,2.6)
to [out=up, in=right] (-1,2.9)
to [out=left, in=up] (-2,1.5)
to [out=down, in=left] (0,0)
;

\flipcrossings{1}
\end{knot}
\end{scope}

\begin{scope}[shift={(4.5,-0.2)}]
\draw [thick, draw=black, fill=white]
       (0,0) -- (0,1) -- (1,1) -- (1,0) -- cycle;
       \node at (0.5,0.5) {$J$};
\end{scope}

\begin{scope}[shift={(10,0)}]\begin{knot}[
clip width=5,
]

\strand [black, thick] (0,0)
to [out=right, in=down] (2,1.5)
to [out=up, in=right] (1,2.9)
to [out=left, in=up] (-0.3,2.6)
to [out=down, in=left] (1,2.3)
to [out=right, in=up] (1.4,1.5)
to [out=down, in=right] (0,0.6);
\strand [black,thick] (0,0.6)
to [out=left, in=down] (-1.4,1.5)
to [out=up, in=left] (-1,2.5)
to [out=right, in=down] (0.3,2.8)
to [out=up, in=right] (-1,3.1)
to [out=left, in=up] (-2,1.5)
to [out=down, in=left] (0,0)
;

\flipcrossings{2}
\end{knot}
\end{scope}

\begin{scope}[shift={(9.5,-0.2)}]
\draw [thick, draw=black, fill=white]
       (0,0) -- (0,1) -- (1,1) -- (1,0) -- cycle;
       \node at (0.5,0.5) {$J$};
\end{scope}

\node at (0,-0.8) {(a) $\BD(J)$};
\node at (5,-0.8) {(b) $\Wh^+(J)$};
\node at (10,-0.8) {(c) $\Wh^-(J)$};
%
%
%
%
%
%
%
%
%
%
%
%
%

\end{tikzpicture}

\caption{The Bing and Whitehead doubles of a knot $J\subset S^3$. These are invariant under orientation reversals.}
\label{fig:BDWh}
\end{figure}
\end{definition}

For $J\subset S^3$, both $\Wh^+(J)$ and $\Wh^-(J)$ have Alexander polynomial 1, so by~\cite{MR666159} both of these knots are topologically concordant to the unknot $U\subset S^3$. Note as well that $\Wh^{-}(-J)=-\Wh^+(J)$, where $-J$ is a mirror image of $J$ with the orientation reversed.

\subsection{Covering links}\label{subsec:coveringlinks}

We will discuss some general behaviour of knots and links under covering maps. Let $\pi\colon \wt{Y}\to Y$ be any covering (not necessarily of finite degree).

Let $J\subset D^3\subset Y$ be a local knot. The connected components $\wt{D}$ of the preimage $\pi^{-1}(D^3)$ each contain a unique connected component $\wt{J}$ of $L$. Moreover, the restriction~$\pi\colon (\wt{D},\wt{J})\to (D^3, J)$ is a homeomorphism. Recall the local knots in a given $3$--manifold are in natural one-to-one correspondence with knots in $S^3$. In particular we have shown that the local knots $J\subset Y$ and $\wt{J}\subset\wt{Y}$ correspond to the same knot in $S^3$ and that there are as many connected components of $\pi^{-1}(J)$ as the degree of the cover $\pi$.

Let $K\subset Y$ be a knot and write $L:=\pi^{-1}(K)$ for the preimage of~$K$. Suppose for now that $K$ is null homotopic. Then $K$ is homotopic to a local knot~$J\subset D^3\subset Y$. Combining the observations of the previous paragraph with the homotopy lifting property of $\pi$, we must have also that for each connected component~$\wt{K}$ of $L$, the cover~$\pi\colon \wt{K}\to K$ has degree~$1$. So $L$ has as many connected components as the degree of the cover~$\pi$.

Now drop the assumption, in this discussion, that $K$ be null homotopic. But assume for the rest of this discussion that for each connected component $\wt{K}\subset L$, the induced cover $\pi\colon \wt{K}\to K$ is of finite degree.
For $\wt{K}\subset L$ a connected component, write $d(\wt{K})$ for the degree of the induced cover~$\pi\colon \wt{K}\to K$. Let $J$ be a knot in~$S^3$. If we tie $J$ into $K$, the component of $L$ that was $\wt{K}$ becomes the connected component~$\wt{K}\#d(\wt{K}) J\subset \pi^{-1}(K\# J)$. In particular, the connected components of $\pi^{-1}(K\# J)$ and of $L$ are in one-to-one correspondence.

Fix either $\CAT=\Diff$ or $ \Top$ and let $K'\subset Y$ be almost concordant to $K$, with preimage $\pi^{-1}(K')=:L'$. By definition, there exists some $J\subset S^3$ such that there is a concordance from $K\#J$ to $K'$ in $Y$; write this concordance as $A=S^1 \times I \subset Y \times I$.
The preimage $\wt{A}$ of $A$ under the covering $\pi\times\Id\colon \wt{Y}\times I\to Y\times I$ is such that $\wt{A}\cap(\wt{Y}\times\{0\})=\pi^{-1}(K\# J)$, and $\wt{A}\cap(\wt{Y}\times\{1\})=L'$. Write $\bigcup_i\wt{A}_i\subset \wt{Y}\times I$, where~$\wt{A}_i$ denotes a connected component. Each $\wt{A}_i$ is a concordance (as opposed to some other surface), as can be seen by considering that the fundamental group~$\pi_1(\wt{A}_i)$ is isomorphic to an index~$d(\wt{K})$ subgroup of~$\pi_1(A_i)\cong\Z$, and is thus isomorphic to~$\Z$. Define a bijection between the set of components of $\wt{A}$ and the set of components of $\pi^{-1}(K\#J)$ by associating to each component~$\wt A_i$ the connected component of~$\pi^{-1}(K\#J)$ appearing on its negative boundary. Similarly, we define a bijection between the set of components of $\wt{A}$ and of $L'$ using the positive boundaries. Note that the components of $\pi^{-1}(K\#J)$ are also in bijective correspondence with
the components of $L$, and so also the set of components of $L$ and $L'$ have the
same cardinality.

We recall a result due to the first author and H.\ Boden~\cite[Lemma 2.11]{Boden16}.

\begin{lemma}[Boden-Nagel]\label{lem:boden-nagel}
  Let $Y$ be a 3--manifold and write $N := \wt{\cl(Y \sm D^3)}$ for the universal cover of $Y$ with a 3--disc removed.  Then there is a smooth embedding~$\phi\colon N \hookrightarrow S^3$.
\end{lemma}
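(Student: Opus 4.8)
The plan is to reduce the statement to a structural understanding of the universal cover $\wt Y$ of $Y$, and then to exhibit $N$ as a subset of one of a short list of manifolds that patently sit inside $S^3$. First I would fix a handle decomposition of the closed $3$--manifold $Y$ with a single $3$--handle, and arrange that the removed disc $D^3$ is exactly this $3$--handle. Removing the open $3$--handle, the complement $\cl(Y\sm D^3)$ then carries a handle decomposition with handles of index at most $2$. Lifting this decomposition to the universal cover identifies $N$ with $\wt Y$ from which one open ball has been deleted for each element of $\pi_1(Y)$, that is, with the $\pi_1(Y)$--orbit of a single lifted $3$--handle removed. In particular $N$ is a simply connected $3$--manifold whose boundary is a disjoint union of $2$--spheres, and it suffices to embed $\wt Y$ minus a suitable collection of open balls into $S^3$.

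When $\pi_1(Y)$ is finite, $\wt Y$ is a closed, simply connected $3$--manifold, so by the elliptization theorem (the Poincar\'e conjecture when $\pi_1(Y)=1$, and the spherical space form theorem in general) $\wt Y$ is diffeomorphic to $S^3$. Then $N$ is simply $S^3$ with finitely many disjoint open balls removed, which is literally a submanifold of $S^3$, and the required embedding is the inclusion. Since in dimension $3$ the smooth, piecewise linear, and topological categories coincide by Moise's theorem, the resulting embedding may be taken to be smooth.

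When $\pi_1(Y)$ is infinite, $\wt Y$ is a non-compact simply connected $3$--manifold, and here I would use the prime decomposition $Y=Y_1\#\dots\#Y_n$ together with the geometrization theorem. The universal cover of a reducible $3$--manifold is a connected sum, performed along a tree-like family of separating $2$--spheres, of copies of the universal covers of the prime factors; by geometrization each such factor cover is diffeomorphic to $S^3$, to $\R^3$, or (for an $S^1\times S^2$ factor) to $S^2\times\R$. Each of these three model spaces embeds in $S^3$ after deleting a ball, and the tree of connect-sum spheres can be realized by nesting the pieces into disjoint balls of $S^3$ in the combinatorial pattern dictated by the tree. Assembling these local embeddings gives $\wt Y\hookrightarrow S^3$, and deleting the $\pi_1(Y)$--orbit of balls only shrinks the image, producing $N\hookrightarrow S^3$; again Moise's theorem lets us take the embedding smooth.

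The main obstacle is precisely the identification of the homeomorphism type of $\wt Y$, and this cannot be finessed by elementary handle manipulation. Simple connectivity of $N$ is indispensable: a compact orientable $3$--manifold built from handles of index at most $2$ need not embed in $S^3$ (for instance $\R P^3\sm D^3$ does not, as one sees by capping its spherical boundary and invoking the Schoenflies theorem), so the argument must genuinely exploit $\pi_1(N)=1$. But recognizing a compact simply connected $3$--manifold as $S^3$ is the Poincar\'e conjecture, and excluding badly-behaved contractible open covers (such as the Whitehead manifold, which admits no embedding into $S^3$) requires the structure theory of universal covers furnished by geometrization. Once these deep inputs are granted, the remaining steps---lifting the handle decomposition, bookkeeping of the deleted balls, and the passage between the smooth and topological categories---are routine.
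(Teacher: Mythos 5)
Your proposal is essentially a reconstruction of the proof that the paper does not reproduce but cites, namely Boden--Nagel \cite[Lemmas 2.10 and 2.11]{Boden16}: identify $N$ with $\wt Y$ minus the orbit of lifted balls, use the Poincar\'e conjecture/geometrisation to recognise the universal cover of each prime factor as $S^3$, $\R^3$, or $S^2\times\R$, and assemble the (possibly infinite, possibly infinite-valence) tree of punctured pieces by nesting them into disjoint round balls of $S^3$. The paper's own text only supplies the smoothness discussion, which it handles by matching collars along the gluing $2$--spheres; you instead appeal to Moise. That appeal is legitimate here, but only because your nesting construction can be arranged so that all boundary spheres in the image are round: then the image is a smooth submanifold of $S^3$ homeomorphic to $N$, and Moise's uniqueness of smooth structures upgrades the homeomorphism to a diffeomorphism. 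A bare topological embedding would not suffice, since its boundary spheres could be wildly embedded (Alexander horned spheres), so this point deserves to be said explicitly rather than waved at; the paper's collar argument is the cleaner route.

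One factual error should be corrected, although it does not enter the logical chain of your argument: the Whitehead manifold \emph{does} embed in $S^3$ --- it is by construction an increasing union of solid tori in $S^3$, hence an open subset of $S^3$; what fails is that it is not homeomorphic to $\R^3$. So it cannot serve as your example of a contractible open $3$--manifold obstructing the lemma. The point you are trying to make is nevertheless sound: there exist contractible open $3$--manifolds that embed in no compact $3$--manifold (Kister--McMillan), and geometrisation (together with Waldhausen's theorem in the Haken case) is what rules such manifolds out as universal covers of closed prime $3$--manifolds. With that example replaced and the smoothing step spelled out, your argument is correct and agrees in substance with the cited proof.
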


\begin{remark}
The fact that the embedding $\phi$ is smooth was not discussed in \cite[Lemma 2.11]{Boden16}, so we discuss it here.
This is clear for prime manifolds by the construction of \cite[Lemma 2.10]{Boden16}.
For composite 3--manifolds, the problem of smoothing reduces to the following:
given two $3$--manifolds~$M$ and $N$ with smooth embeddings~$\phi_M \colon M \to S^3$,
$\phi_N \colon N \to S^3$ such that the images of these embeddings
have exactly a $2$--sphere in common, we must arrange the smooth
structure on $M \cup_{S^2} N$ so that $\phi_M \cup_{S^2} \phi_N$ is a
smooth map.
Note that $M$ and $N$ are glued along a smooth map.
Furthermore, a normal bundle of $S^2$ gives rise to a collar in
$M$, and in $N$. Use these collars to construct a smooth structure
on $M \cup_{S^2} N$~\cite[Section~13.8]{Janich82} and the resulting
map~$\phi_M \cup_{S^2} \phi_N$ will indeed be smooth.
\end{remark}

This embedding result will be used in this article to transport a concordance in a general $3$--manifold into $S^3$. For example, this embedding result can be used to prove the following proposition.

\begin{proposition}\label{prop:CtoCYinj}Let $\CAT=\Diff$ or $\Top$. For a closed, connected, oriented~$3$--manifold $Y$, the natural map $\mathcal{C}^{\CAT}\to \mathcal{C}^{\CAT}_e(Y)$, defined by embedding a knot $K\subset S^3$ as a local knot in $Y$, is injective.
\end{proposition}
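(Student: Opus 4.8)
The statement is equivalent to the assertion quoted in the introduction: if two local knots $K,J\subset D^3\subset Y$ are $\CAT$--concordant in $Y\times I$, then they are already $\CAT$--concordant in $S^3$. Granting this, injectivity is immediate, since the class of a knot in $\mathcal{C}^{\CAT}=\mathcal{C}^{\CAT}_e(S^3)$ is recovered from its concordance class as a local knot. The plan is to push a given concordance $A\cong S^1\times I\subset Y\times I$ up to the universal cover of the complement of a carefully chosen ball, and from there into $S^3\times I$ via the Boden--Nagel embedding of Lemma~\ref{lem:boden-nagel}.

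First I would choose the ball to excise and then lift. Since $A$ is the image of a compact surface, its projection $\pr_Y(A)\subset Y$ has topological dimension at most $2$, hence is a proper closed subset with dense complement, even after adjoining the ball $D^3$ containing $K$ and $J$; this holds in both categories and requires no smoothing of $A$. I therefore choose an embedded ball $B\subset Y$ disjoint from both $A$ and $D^3$ and set $N:=\wt{\cl(Y\sm B)}$, the manifold of Lemma~\ref{lem:boden-nagel}, so that $A\subset (Y\sm B)\times I$ while $K,J$ remain local in $Y\sm B$. As $K$ is local it is null homotopic, and since $\pi_1(Y\sm B)\to\pi_1(Y)$ is an isomorphism, the core of $A$ is null homotopic in $Y\sm B$; thus $A$ lifts to the universal cover, and by the covering-link discussion above each component of the lift is a degree-one cover of $A$, hence again a concordance. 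Selecting one component gives a concordance $\wt A\subset N\times I$ from a lift $\wt K$ to a lift $\wt J$; these are local knots lying in lifts of $D^3$, which sit inside $N$ because $B$ is disjoint from $D^3$, and the covering restricts to an orientation-preserving homeomorphism on each such lifted ball, so $\wt K$ and $\wt J$ represent the same knots in $S^3$ as $K$ and $J$.

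I then apply $\phi\times\Id\colon N\times I\hookrightarrow S^3\times I$, taking $\phi$ orientation preserving (post-composing with a reflection of $S^3$ if necessary). The image $(\phi\times\Id)(\wt A)$ is a $\CAT$--concordance in $S^3\times I$ between the local knots $\phi(\wt K)$ and $\phi(\wt J)$, which by the orientation-preserving correspondence above and the opening lemma of this section are isotopic in $S^3$ to $K$ and $J$ respectively. Concatenating with the traces of these two isotopies produces a $\CAT$--concordance from $K$ to $J$ in $S^3$, which proves the reformulated statement and hence injectivity.

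I expect the main difficulty to be arranging for the lifted concordance to lie entirely within $N\times I$. This is what forces the disjointness in the choice of $B$, and it relies essentially on the knots being local: null homotopy of the core is both what allows $A$ to lift and what guarantees that each lifted component is a degree-one cover of $A$, that is, again a genuine concordance connecting the correct lifts. A secondary point requiring care is the orientation bookkeeping, so that $\phi(\wt K)$ recovers $K$ and not its mirror image. By contrast, the one genuinely geometric input, the smooth embedding $N\hookrightarrow S^3$, is supplied wholesale by Lemma~\ref{lem:boden-nagel}, so everything else reduces to lifting and general position.
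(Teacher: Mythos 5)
Your argument follows the same route as the paper's proof: reduce injectivity to showing that local knots concordant in $Y$ are concordant in $S^3$, find a ball $B$ with $A\cap(B\times I)=\emptyset$, lift $A$ to the universal cover of the complement of $B$, and transport the lifted concordance into $S^3\times I$ using Lemma~\ref{lem:boden-nagel}. The lifting step, the identification of the components of the lift as degree-one covers (hence concordances), and the orientation bookkeeping at the end are all correct and agree with the paper. The one place where you diverge is the justification for the existence of the ball, and this is where there is a genuine gap.

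You assert that $\pr_Y(A)$ has topological dimension at most $2$ because it is the continuous image of a compact surface, adding that this ``holds in both categories and requires no smoothing of $A$.'' That assertion fails in the topological category: continuous maps can raise covering dimension (Peano curves), and local flatness of $A\subset Y\times I$ puts no restriction on the composite $\pr_Y|_A$, which is merely continuous. Indeed there exist locally flat concordances whose projection is \emph{all} of $Y$, so that no admissible ball exists for that representative: pick $y_0$ on a local knot $K$, choose an isotopy $(h_t)$ of $Y$ with $h_0=\operatorname{id}$ and $t\mapsto h_t(y_0)$ a space-filling curve (possible because the evaluation map $\operatorname{Homeo}(Y)\to Y$ admits local sections, hence is a fibration, so any path lifts), and let $A$ be the image of the product concordance $K\times I$ under the level-preserving homeomorphism $(y,t)\mapsto(h_t(y),t)$ of $Y\times I$. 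This $A$ is a locally flat concordance between local knots, yet $\pr_Y(A)\supseteq\{h_t(y_0)\mid t\in I\}=Y$. So in $\Top$ one must first replace $A$ by an ambiently isotopic concordance before any general position is available; this is exactly the step your proof is missing and the paper supplies: take a push-off of an arc $\pt\times I\subset A\cong S^1\times I$ (using local flatness), straighten it by an ambient isotopy to a vertical arc $\pt\times[0,1]$, and then thicken that arc to obtain a $D^3\times I$ disjoint from the isotoped concordance. In the smooth category your conclusion does hold, but for a different reason than the one you give: there $\pr_Y|_A$ is smooth, so its image has measure zero by Sard's theorem; the purely dimension-theoretic argument is not valid even in that case, it is merely rescued by smoothness. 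With the ball-finding step repaired in this way, the rest of your proof goes through as written.
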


\begin{proof}
Denote the natural map from the set of knots in $S^3$ to the set of knots in~$Y$, given by sending a knot $K\subset S^3$ to a local knot in $Y$, by $\iota$. Suppose $J, J'\subset S^3$ are such that $\iota(J)$ is concordant in $Y$ via an annulus $A\subset Y\times[0,1]$ to $\iota(J')$. There is a $3$--disc~$D^3\subset Y$ disjoint from the projection of $A$ to $Y$. To see this, take an arc running parallel to $A$, straighten it via isotopy in the complement of $A$ to be $\pt\times[0,1]$, then thicken the point to a $3$--disc. We excise this $3$--disc and write $N:=\cl(\wt{Y\sm D^3})$ for the closure of the universal cover. Apply Lemma~\ref{lem:boden-nagel} to obtain an embedding $\phi\colon N\hookrightarrow S^3$. By the discussion above, any connected component $\wt{A}\subset N\times[0,1]$ of the preimage of $A$ under the covering map is a concordance in $N$ from a connected component $\wt{J}$ of the lift of $\iota(J)$ to a connected component $\wt{J'}$ of the lift of $\iota(J')$. The image of $\wt{A}$ under $\phi\times\Id\colon N\times[0,1]\to S^3\times[0,1]$ is a concordance from $\phi(\wt{J})$ to $\phi(\wt{J'})$. But by the discussion above, each connected component of the preimage of a local knot is that same local knot. So we obtain a concordance between $J$ and  $J'$ in $S^3\times [0,1]$, as required.
\end{proof}

\begin{remark}
In the special case that $Y=S^1\times S^2$, this result was later obtained in \cite[Theorem 2.5]{DNPR} using a different method. While the method in that paper does not generalise to arbitrary $3$--manifolds, it does extend from annuli to any type of surface cobordism.
\end{remark}

\begin{corollary}\label{cor:smoothtopgeneral} In any closed, connected, oriented $3$--manifold $Y$ there is an infinite family of knots $K_1, K_2, \ldots$ topologically concordant to the unknot but mutually distinct in smooth concordance.
\end{corollary}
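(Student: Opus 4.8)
The plan is to produce the required family inside $S^3$ and then push it into $Y$ as local knots, invoking the injectivity of Proposition~\ref{prop:CtoCYinj} to preserve smooth distinctness and the functoriality of local embedding to preserve topological triviality.

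First I would exhibit an infinite family $K_1, K_2, \dots \subset S^3$ that is topologically concordant to the unknot but mutually distinct in smooth concordance. A clean choice is $K_n := nD$, where $D := \Wh^+(T_{2,3})$ is the untwisted positive Whitehead double of the right-handed trefoil. As noted above, $D$ has Alexander polynomial $1$ and is therefore topologically concordant to the unknot by Freedman's theorem \cite{MR666159}; hence each connected sum $K_n = nD$ is also topologically concordant to the unknot, since topological sliceness is preserved under connected sum. On the smooth side, Hedden's computation of the $\tau$ invariant of Whitehead doubles gives $\tau(D)=1$, and since $\tau$ is additive under connected sum and is a smooth concordance invariant, $\tau(K_n)=n$. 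As these values are pairwise distinct, the $K_n$ are mutually distinct in smooth concordance.

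Next I would embed each $K_n$ as a local knot $\iota(K_n)\subset Y$, where $\iota$ denotes the local-knotting map of Proposition~\ref{prop:CtoCYinj}. Since $\iota$ carries $\CAT$--concordant knots to $\CAT$--concordant knots and sends the unknot $U\subset S^3$ to the unknot in $Y$, the topological concordances $K_n\sim U$ in $S^3$ yield topological concordances $\iota(K_n)\sim \iota(U)$ in $Y$, so each $\iota(K_n)$ is topologically concordant to the unknot in $Y$. For smooth distinctness, Proposition~\ref{prop:CtoCYinj} applied with $\CAT=\Diff$ states precisely that $\iota\colon \mathcal{C}^{\Diff}\to\mathcal{C}^{\Diff}_e(Y)$ is injective; hence the pairwise distinct classes $[K_n]\in\mathcal{C}^{\Diff}$ have pairwise distinct images $[\iota(K_n)]$. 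The family $\iota(K_1),\iota(K_2),\dots$ then satisfies the conclusion.

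The genuine content has already been absorbed into Proposition~\ref{prop:CtoCYinj} and, through it, the Boden--Nagel embedding of Lemma~\ref{lem:boden-nagel}; given these, the corollary is essentially immediate and no step presents a serious obstacle. The only point requiring care is to phrase smooth distinctness at the level of full concordance classes rather than mere non-sliceness: two of the knots $\iota(K_n)$ could a priori coincide in $\mathcal{C}^{\Diff}_e(Y)$ even while each is individually nontrivial, and it is exactly the injectivity of $\iota$ that rules this out.
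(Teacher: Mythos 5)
Your proposal is correct and follows essentially the same route as the paper: the paper also takes the family $nD$ with $D=\Wh^+(T_{2,3})$ (its Example~\ref{exm:SmoothFamily}), embeds it locally in $Y$, obtains the topological concordances by locally embedding the concordances from $S^3$ into $D^3\times[0,1]\subset Y\times[0,1]$, and rules out smooth concordances in $Y$ by the injectivity statement of Proposition~\ref{prop:CtoCYinj}. Your closing remark correctly identifies the one point of substance, namely that injectivity of $\iota$ on concordance classes (not mere non-sliceness of each knot) is what gives mutual smooth distinctness in $Y$.
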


\begin{proof}
For $S^3$ such families are known, and we construct one in Example~\ref{exm:SmoothFamily}.
Take any such family in $S^3$ and embed it locally in $Y$.
Suppose $K_i,K_j\subset Y$ are knots in this family, with $i\neq j$. The knots $K_i$ and $K_j$ are topologically concordant in $Y$, as the topological concordance from $K_i$ to $K_j$ in $S^3$ can be locally embedded in~$D^3\times[0,1]\subset Y\times[0,1]$. The existence of a smooth concordance from~$K_i$ to~$K_j$ is precluded by Proposition~\ref{prop:CtoCYinj} as~$K_i$ is not smoothly concordant to~$K_j$ in~$S^3$.
\end{proof}

\noindent Here is an observation that will be crucial in this article.
\begin{proposition}\label{prop:almostimpliesconc}
Fix either $\CAT=\Diff$ or $ \Top$ and let $K,K'\subset Y$ be almost concordant. Let $\pi\colon \wt{Y}\to Y$ be a cover such that each connected component of both~$\pi^{-1}(K)=:L$ and $\pi^{-1}(K')=:L'$ is unknotted in $\wt{Y}$. Then $L$ is concordant to~$L'$.
\end{proposition}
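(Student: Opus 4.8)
The plan is to reduce the claim to the sliceness in $S^3$ of some local knots, using the component-wise covering machinery set up in the discussion preceding Lemma~\ref{lem:boden-nagel}. First I would unpack the hypothesis. As $K$ and $K'$ are almost concordant, there is a knot $J\subset S^3$ together with a concordance $A$ from $K\#J$ to $K'$ in $Y\times I$. Each component of $L=\pi^{-1}(K)$ is unknotted, hence an embedded circle covering $K$ with some finite degree; write $\wt K_i$ for these components and $d_i:=d(\wt K_i)$ for their degrees. By the discussion preceding Lemma~\ref{lem:boden-nagel}, the preimage of $A$ under $\pi\times\Id$ is then a disjoint union of concordances showing that $\pi^{-1}(K\#J)$ is concordant to $L'$, where the components of $\pi^{-1}(K\#J)$ are exactly the knots $\wt K_i\#d_iJ$. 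Restricting this link concordance to a single component $\wt A_i$ gives a concordance in $\wt Y$ from $\wt K_i\#d_iJ$ to the corresponding component of $L'$, which is unknotted by hypothesis.

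The key step is to prove that each local knot $d_iJ$ is slice in $S^3$. Since $\wt K_i$ is unknotted, $\wt K_i\#d_iJ$ is isotopic in $\wt Y$ to a local knot of type $d_iJ$, and by the previous paragraph it is concordant to a local unknot. I would then rerun the proof of Proposition~\ref{prop:CtoCYinj}: choose a $3$--disc $D^3\subset Y$ disjoint from the projection to $Y$ of the compact annulus $\wt A_i$ (possible, as this projection is nowhere dense in $Y$), lift $\wt A_i$ to the universal cover of $Y$, note that it then avoids every lift of $D^3$ and so lies in $N=\wt{\cl(Y\sm D^3)}$, and push it into $S^3$ using the embedding of Lemma~\ref{lem:boden-nagel}. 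As lifts of local knots are local knots of the same type, this yields a concordance in $S^3$ from $d_iJ$ to the unknot, so $d_iJ$ is slice.

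Finally I would assemble the desired concordance. Sliceness of $d_iJ$ allows the local summand on each $\wt K_i$ to be removed by a concordance supported in a tubular neighbourhood of $\wt K_i$ --- gather the $d_i$ copies of $J$ into a single ball by an isotopy inside the tube, then cap with the slice disc --- and these neighbourhoods are pairwise disjoint. The result is a concordance from $\pi^{-1}(K\#J)$ to $L=\pi^{-1}(K)$. Composing with the concordance from $\pi^{-1}(K\#J)$ to $L'$ produced in the first step shows that $L$ is concordant to $L'$, as required.

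I expect the main obstacle to be the sliceness step, and in particular the fact that $\wt Y$ need not be compact when the cover has infinite degree, so Proposition~\ref{prop:CtoCYinj} cannot be quoted verbatim. The point to get right is to excise the $3$--disc downstairs in $Y$, not in $\wt Y$: then every one of its lifts to the universal cover of $Y$ misses the lifted annulus and Lemma~\ref{lem:boden-nagel} applies directly, whereas a disc chosen in $\wt Y$ would control only the lifts lying in the subgroup $\pi_1(\wt Y)\leq\pi_1(Y)$. A secondary, purely bookkeeping, matter is to keep the bijections between the components of $L$, of $\pi^{-1}(K\#J)$, and of $L'$ consistent throughout, but these are already provided by the discussion preceding Lemma~\ref{lem:boden-nagel}.
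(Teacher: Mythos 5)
Your proposal is correct and follows essentially the same route as the paper's proof: lift the almost concordance componentwise, prove each local knot $d_iJ$ is slice in $S^3$ by excising a $3$--disc downstairs in $Y$ (precisely the non-compactness fix the paper highlights) and rerunning the Boden--Nagel embedding argument of Proposition~\ref{prop:CtoCYinj}, then exploit the locality of the resulting null concordances to assemble a simultaneous concordance from $\pi^{-1}(K\#J)$ to $L$ and concatenate with $\pi^{-1}(A)$. The only quibble is your parenthetical justification for the existence of the disc --- nowhere density of the projection follows from Sard's theorem only for $\CAT=\Diff$, whereas for $\CAT=\Top$ one needs the arc-straightening argument from the proof of Proposition~\ref{prop:CtoCYinj}, which your appeal to that proof in any case supplies.
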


\begin{proof}
For some $J\subset S^3$, there is a concordance $A$ from $K\#J$ to $K'$. Choose a connected component $\wt{A}$ of the preimage $\pi^{-1}(A)$. By the discussion earlier in this section, $\wt{A}$ is a concordance from $\wt{K}\# d(\wt{K})J$ to $\wt{K'}$, where $\wt{K}$ and $\wt{K'}$ are connected components of $L$ and of $L'$ respectively. As $\wt{K}$ and $\wt{K'}$ are unknots, this is a concordance in $\wt{Y}$ from $d(\wt{K})J$ to $U$.

\begin{claim}
The knot in $S^3$ corresponding to the local knot $d(\wt{K})J\subset \wt{Y}$ is concordant to the unknot in $S^3$.
\end{claim}

(We prove this claim using an argument extremely similar to the proof of Proposition~\ref{prop:CtoCYinj}. The reason this proposition cannot simply be applied directly is that, in the current situation, the ambient 3--manifold $\wt{Y}$ for the local knots $d(\wt{K})J$ and $U$ is not necessarily compact.)

Take $D^3\subset Y$ disjoint from the projection of $A$ to $Y$. The concordance $\wt{A}$ in the cover $\wt{Y}$ is disjoint from the preimage of $D^3\times[0,1]$ under $\pi$. Hence $\wt{A}$ lifts to the universal cover of $Y\sm D^3$. Use Lemma~\ref{lem:boden-nagel} to embed the closure of the universal cover of $Y\sm D^3$ into~$S^3$. We obtain a lift of the concordance $\wt{A}$, now embedded in $S^3\times [0,1]$. As both~$d(\wt{K})J$ and $U$ are local knots, the components of their lifts to the universal cover are again copies of $d(\wt{K})J$ and of $U$ respectively. So the null concordance of $d(\wt{K})J$ in $\wt{Y}$ implies moreover that $d(\wt{K})J$ is null concordant in $S^3 \times [0,1]$, as required.

Consequently, each component $\wt{K}\#d(\wt{K})J$ has a concordance to $\wt{K}$ which is the product concordance outside a $3$--ball.
Because of that, we can collect them into a single simultaneous concordance~$A'$ from $\pi^{-1}(K\# J)$ to $L$ in $\wt{Y} \times [0,1]$.
We note that if the concordances had not been local, we would not know whether the individual concordances could be simultaneously used to construct an embedded~$A'$. Concatenating the concordances $\pi^{-1}(A)$ and $A'$ we obtain the required concordance from $L'$ to $L$.
\end{proof}

\subsection{Rational concordance and the \texorpdfstring{$\tau$}{tau} invariant}

Two knots $J_1, J_2\subset S^3$ are called \emph{rationally smoothly concordant} if they are smoothly concordant in a rational homology $S^3\times [0,1]$, that is if there is a manifold with boundary $(X,S^3\times\{0,1\})$ and a smooth embedding $S^1 \times I \hookrightarrow  X$ with the image of $S^1 \times \{i\}$  in  $S^3 \times \{i\}$ equal to $J_i$, for $i=0,1$, and $H_*(X;\Q)=H_*(S^3;\Q)$. We write $\mathcal{C}^{\Diff}_\Q$ for the group of knots in $S^3$ considered up to rational smooth concordance.

Our main tool in this paper for obstructing the existence of smooth concordances will be the $\tau$ invariant~\cite{MR2026543}. The $\tau$ invariant is a group homomorphism $\tau\colon \mathcal{C}^{\Diff}_\Q\to\Z$, and via the natural forgetful homomorphism $\mathcal{C}^{\Diff}\to \mathcal{C}^{\Diff}_\Q$ it is also an invariant of smooth concordance in $S^3$. It has the property that $\tau(J)=\tau(J^r)$, where~$-^r$ denotes the operation of taking the knot with the reversed orientation. The \emph{smooth slice genus} of a knot~$J\subset S^3$ is the minimal integer $g$ such that there is a smoothly embedded oriented genus $g$ surface $\Sigma\subset D^4$ with boundary $J$. The value $|\tau(J)|$ is a lower bound for the smooth slice genus of~$J$~\cite[Corollary 1.3]{MR2026543}. It follows that~$|\tau(J)|$ is also a lower bound for the Seifert genus of $J$. It is shown in~\cite[Corollary 1.7]{MR2026543} that if $r,s$ are coprime integers with $r,s>1$, then there is the following Ozsv{\'a}th-Szab{\'o} \emph{torus knot formula} for the $\tau$ invariant:~$\tau(T_{r,s})=(r-1)(s-1)/2$.

We recall a theorem about the $\tau$ invariant of positive Whitehead doubles, due to Hedden \cite[Theorem 1.4]{Hedden:2007-1}, which we shall use later.

\begin{theorem}[Hedden]\label{thm:hedden}The $\tau$ invariant of the positive Whitehead double of a knot $J$ in $S^3$ is given by\[\tau(\Wh^+(J))=\left\{\begin{array}{ll}0&\text{for $\tau(J)\leq 0$,}\\1&\text{for $\tau(J)>0$.}\end{array}\right.\]
\end{theorem}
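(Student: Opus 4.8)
The plan is to first confine $\tau(\Wh^+(J))$ to the set $\{0,1\}$ by elementary means, and then to determine the exact value by a knot Floer homology computation, which is where all the content lies. For the range, observe that the positive clasp of $\Wh^+(J)$ is built from two positive crossings, and changing either one of them converts the clasp into a cancelling pair, so that a single Reidemeister~II move unknots the double. Thus $\Wh^+(J)$ and the unknot $U$ differ by one crossing change, with $\Wh^+(J)$ in the role of the positive knot and $U$ in the role of the negative knot. The crossing change inequality for $\tau$ from \cite{MR2026543} then gives $\tau(U)\leq \tau(\Wh^+(J))\leq \tau(U)+1$, and since $\tau(U)=0$ we obtain $\tau(\Wh^+(J))\in\{0,1\}$. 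The upper bound alternatively follows from the genus-$1$ Seifert surface that $\Wh^+(J)$ obviously bounds, since $|\tau|$ is bounded above by the slice genus.

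The real task is to decide between $0$ and $1$ as a function of the sign of $\tau(J)$, and for this I would compute enough of the filtered complex $\widehat{CFK}(\Wh^+(J))$ to read off $\tau$. Following Hedden \cite{Hedden:2007-1}, I would set up the skein exact triangle associated to one crossing of the clasp. Two of its three terms are the objects already used above, namely $\Wh^+(J)$ and the unknot; the third, oriented, resolution is a cable of $J$, whose knot Floer homology is controlled by the filtered chain homotopy type of $CFK^\infty(J)$, and in particular by $\tau(J)$. Tracking the maps in the triangle, the generator of $\widehat{HF}(S^3)\cong \Z$ that computes $\tau(\Wh^+(J))$ should survive in Alexander filtration level $1$ precisely when $\tau(J)>0$, and in level $0$ when $\tau(J)\leq 0$, which is exactly the claimed dichotomy.

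The main obstacle is the precise computation of the connecting maps in this triangle together with the identification of the surviving filtration level: equivalently, computing $\tau$ of the relevant cable in terms of $\tau(J)$. This is the step where the asymmetry between $\tau(J)>0$ and $\tau(J)\leq 0$ appears, reflecting the asymmetric behaviour of $CFK^\infty(J)$ near its diagonal; it also accounts for the opposite behaviour of $\Wh^-$, since mirroring negates $\tau$. I would organise this computation around the large surgery formula, because $\tau(J)$ is exactly the threshold at which the subquotient complexes $\widehat{A}_s$ of $CFK^\infty(J)$ change qualitative behaviour, and it is the crossing of this threshold that the clasp detects. No classical invariant can perform this last step: as noted in the excerpt, every Whitehead double has Alexander polynomial $1$ and is topologically slice, so the distinction between the two cases is invisible outside the filtered refinement supplied by knot Floer homology.
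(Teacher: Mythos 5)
First, a point of comparison: the paper does not prove this statement at all --- it is imported from Hedden \cite[Theorem 1.4]{Hedden:2007-1} (specialised to the $0$--twisted double, $D_+(J,0)=\Wh^+(J)$), so the benchmark for your write-up is Hedden's original argument rather than anything internal to the paper.

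Your first step is correct and complete: changing one of the two positive clasp crossings turns the clasp into a cancelling pair, hence unknots $\Wh^+(J)$, and the crossing-change inequality $\tau(K_-)\leq\tau(K_+)\leq\tau(K_-)+1$ of \cite{MR2026543} pins $\tau(\Wh^+(J))$ to $\{0,1\}$, with the genus-$1$ Seifert surface giving the upper bound independently. The genuine gap is in the second step, which is where the entire content of the theorem lives. The assertion that the relevant generator ``should survive in Alexander filtration level $1$ precisely when $\tau(J)>0$'' is not an argument; it is a restatement of the theorem, and nothing in the proposal explains why the threshold sits exactly at $\tau(J)=0$. Moreover, the intermediate term of your skein triangle is not an honest cable: the two clasp strands are anti-parallel, so the oriented resolution is a two-component link with oppositely oriented components (the boundary of a twisted annulus). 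Winding-number-zero configurations of exactly this kind are the ones to which the standard satellite/cabling formulas for knot Floer homology do not apply, so identifying that term ``in terms of $CFK^\infty(J)$'' is essentially as hard as the original problem. Hedden's actual proof does not reduce to chasing one exact triangle: it constructs an explicit Heegaard diagram adapted to the doubling pattern and computes $\widehat{HFK}$ of $D_+(K,t)$ in its top Alexander grading by identifying it with the Floer homology of large surgeries on the companion (the $\hat{A}_s$-type complexes you allude to), and only then extracts $\tau$. That computation is the paper-length core of \cite{Hedden:2007-1}, and your proposal defers it entirely (your ``main obstacle''). As it stands, the write-up establishes $\tau(\Wh^+(J))\in\{0,1\}$ but not the dichotomy that the theorem asserts.
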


In \cite[Corollary 3]{MR2629765}, Van Cott generalises previous cabling results of Hedden~\cite{MR2511910} to derive a general formula for the $\tau$ invariant of the cable of a knot~$J\subset S^3$ in the case that $g_3(J)=\pm \tau(J)$.

\begin{proposition}[Van Cott]\label{prop:vancottcable}Let $J\subset S^3$ be a non trivial knot and let $r,s$ be coprime integers with~$r>1$.
\begin{enumerate}[leftmargin=*]
\item \label{item:vancottcable1} If $g_3(J)= \tau(J)$ then $\displaystyle{\tau(J_{r,s})=r\cdot \tau(J)+\frac{(r-1)(s-1)}{2}}$.
\item \label{item:vancottcable2} If $g_3(J)=- \tau(J)$ then $\displaystyle{\tau(J_{r,s})=r\cdot \tau(J)+\frac{(r-1)(s+1)}{2}}$.
\end{enumerate}
\end{proposition}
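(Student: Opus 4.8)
The plan is to reduce everything to a single two-sided cabling inequality for $\tau$ together with a genus computation, and to exchange the two cases by mirroring. I would first prove~(1) for all coprime $s$ and all $r>1$, and then deduce~(2) formally. For the deduction, suppose $J$ satisfies $g_3(J)=-\tau(J)$. Its mirror $-J$ (mirror image with reversed orientation, in the notation above) satisfies $g_3(-J)=g_3(J)=-\tau(J)=\tau(-J)$, using $\tau(-J)=-\tau(J)$; hence $-J$ falls under the hypothesis of~(1). Since mirroring reverses the meridional parameter, $-(J_{r,s})=(-J)_{r,-s}$, and applying~(1) to $-J$ with parameter $-s$ gives
\[\tau\big((-J)_{r,-s}\big)=r\,\tau(-J)+\tfrac{(r-1)(-s-1)}{2}.\]
Negating and using $\tau(-K)=-\tau(K)$ yields $\tau(J_{r,s})=r\,\tau(J)+\tfrac{(r-1)(s+1)}{2}$, which is~(2). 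So it suffices to prove~(1).

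To prove~(1), the key input is the knot Floer homology computation of cables due to Hedden, which for $s>0$ gives the two-sided bound
\[r\,\tau(J)+\tfrac{(r-1)(s-1)}{2}\le \tau(J_{r,s})\le r\,\tau(J)+\tfrac{(r-1)(s+1)}{2},\]
whose two sides differ by $r-1$. The left inequality already gives ``$\ge$'' in~(1). For ``$\le$'', I would use the slice--genus bound $\tau(J_{r,s})\le g_4(J_{r,s})\le g_3(J_{r,s})$ together with the standard Seifert genus of a cable: for $s>0$ one builds a Seifert surface for $J_{r,s}$ from $r$ parallel copies of a minimal genus Seifert surface for $J$ and the genus $(r-1)(s-1)/2$ surface of $T_{r,s}$, giving $g_3(J_{r,s})=r\,g_3(J)+\tfrac{(r-1)(s-1)}{2}$. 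When $g_3(J)=\tau(J)$ this upper bound coincides with the lower bound, proving~(1) for all $s>0$.

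It remains to extend~(1) to all coprime $s$. For this I would invoke Van Cott's twisting lemma, which measures the effect on $\tau(J_{r,s})$ of inserting a full twist on the $r$ strands of the cable, i.e.\ the operation $s\mapsto s+r$. Combined with the two-sided bound (window of width $r-1$), the twisting lemma pins down the position of $\tau(J_{r,s})$ within its window and propagates the sharp value already established for large positive $s$ to every $s$ lying in the same residue class modulo $r$; as each coprime residue class contains positive integers, this covers all coprime $s$.

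The genuinely deep ingredient is Hedden's cabling computation behind the two-sided bound; the genus estimate and the mirror reduction are formal. The main obstacle is the extension to $s\le 0$: there the upper genus bound is no longer sharp (the Seifert genus of $J_{r,s}$ for $s<0$ is $r\,g_3(J)+\tfrac{(r-1)(-s-1)}{2}$, far above the claimed value), so all the work is carried by the twisting lemma. This step must use that $J$ is non-trivial, i.e.\ that $g_3(J)=|\tau(J)|>0$ strictly: for the unknot $U$ one has $g_3=\tau=0$ yet $\tau(U_{r,s})=\tau(T_{r,s})=\tfrac{(r-1)(s+1)}{2}$ for $s<0$, so the formula in~(1) fails. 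Any correct argument must therefore break the symmetry between positive and negative $s$ using the strictly positive genus of the companion.
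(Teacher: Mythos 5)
First, a point of comparison: the paper does not prove this proposition at all; it is quoted verbatim from Van Cott \cite[Corollary 3]{MR2629765}, so your attempt can only be measured against her argument. Two of your three steps are correct and match the standard reductions: the mirror argument deducing (2) from (1) (using $-(J_{r,s})=(-J)_{r,-s}$ and $\tau(-K)=-\tau(K)$) is fine, and your proof of (1) for $s>0$, squeezing $\tau(J_{r,s})$ between Hedden's lower bound $r\tau(J)+\frac{(r-1)(s-1)}{2}$ and the genus upper bound $\tau(J_{r,s})\le g_3(J_{r,s})= r\,g_3(J)+\frac{(r-1)(s-1)}{2}$, is also fine.

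The gap is in your third step, the extension to $s<0$, and it is fatal. The twisting inequality, however you realise it (crossing changes or the band cobordism effecting $s\mapsto s+r$), reads $\tau(J_{r,s})\le\tau(J_{r,s+r})\le\tau(J_{r,s})+\frac{r(r-1)}{2}$. Feeding in the known value $\tau(J_{r,s+r})=r\tau(J)+\frac{(r-1)(s+r-1)}{2}$ for $s+r>0$, the right-hand inequality returns only $\tau(J_{r,s})\ge r\tau(J)+\frac{(r-1)(s-1)}{2}$, which is exactly Hedden's lower bound, and the left-hand inequality returns an upper bound weaker than the top of Hedden's window: the window is not narrowed at all. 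Bottom-of-window values propagate \emph{upward} in $s$ through twisting, never downward. Your own closing paragraph shows why no repair from these ingredients is possible: the unknot satisfies every hypothesis you actually use in this step (the two-sided window, the formula for all $s>0$, the twisting inequality), yet the conclusion fails for it at $s<0$; since nothing in your step uses $g_3(J)>0$, no argument built from those tools alone can succeed. What Van Cott actually does is import a second Floer-theoretic computation at the \emph{other} stable end: Hedden's knot Floer homology determination of $\tau$ of cables with large framing parameter \cite{MR2511910} applies also to nontrivial knots with $\tau=-g_3$ (equivalently, via mirroring, it gives $\tau(J_{r,s})=r\tau(J)+\frac{(r-1)(s-1)}{2}$ for $s\ll 0$ when $g_3(J)=\tau(J)$); this is precisely where non-triviality enters, since for the unknot the two tails differ by $r-1$. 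Her cobordism theorem then shows that $s\mapsto \tau(J_{r,s})-\frac{(r-1)s}{2}$ is non-increasing, and a non-increasing function with equal values at both tails is constant, which yields (1) for every coprime $s$. So the correct proof needs Floer input at both ends plus a formal monotonicity; your proposal supplies only the input at $+\infty$.
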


\begin{example}\label{exm:SmoothFamily}
Consider $\Wh^+(T_{2,3})$, the positive Whitehead double of the right-handed trefoil $T_{2,3}$. The Ozsv{\'a}th-Szab{\'o} torus knot formula gives~$\tau(T_{2,3}) = 1$. Note as well that there exists a genus 1 Seifert surface for~$\Wh^+(T_{2,3})$. As $|\tau|$ is a lower bound for the Seifert genus, these facts combine to show that $g_3(\Wh^+(T_{2,3}))=1$.

Now consider the family of knots~$J_i:=\Wh^+(T_{2,3})\#\cdots\#\Wh^+(T_{2,3})$, the $i$--fold connected sum. As $\tau(T_{2,3}) = 1$, by Theorem~\ref{thm:hedden}, $\tau(\Wh^+(T_{2,3}))=1$ and so~$\tau(J_i)=i$ and the $J_i$ are mutually distinct in smooth concordance.
The knots~$J_i$ are all topological null concordant as they have Alexander polynomial~$1$; see
e.g.~\cite[Theorem 11.7 B]{Freedman-Quinn:1990-1}.
\end{example}

\section{$\Diff$ versus\ $\Top$ for the null homotopic class in any $3$--manifold}

\noindent Assume for this section that $Y\neq S^3$. Our goal is to prove the following theorem.
\begin{theoremA}
For every $3$--manifold $Y \neq S^3$, there exists an infinite family of null homotopic knots $K_1, K_2, \dots$ in $Y$ that are all topologically concordant to one another but are mutually distinct in smooth almost concordance.
\end{theoremA}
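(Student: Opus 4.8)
The plan is to transport the question into $S^3$ by a covering--link argument, and there to exploit the family $J_i := i\cdot \Wh^+(T_{2,3})$ of Example~\ref{exm:SmoothFamily}, which satisfies $\tau(J_i)=i$ and consists of topologically slice knots. The engine driving the whole argument is Proposition~\ref{prop:almostimpliesconc}: if I can arrange that \emph{every} component of the covering link of each $K_i$ is unknotted, then any smooth almost concordance $K_i\sim K_j$ forces the covering links to be smoothly \emph{concordant} as links. The point of insisting on unknotted components is that the extra freedom in almost concordance, namely the tie--in of a local knot, is then automatically absorbed: in the notation of Proposition~\ref{prop:almostimpliesconc} the relevant local knot $d(\wt K)J$ is forced to be null concordant. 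Thus the burden shifts entirely onto distinguishing the covering links in smooth link concordance.

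For the construction, since $Y\neq S^3$ I have $\pi_1(Y)\neq 1$ by the Poincar\'e conjecture, so I fix an essential simple closed curve $\gamma\subset Y$ with tubular neighbourhood a solid torus $T$. I would take $K_i\subset T\subset Y$ to be the satellite of $\gamma$ determined by a fixed winding number~$0$ ``doubling'' pattern into which $J_i$ has been tied. Winding number~$0$ guarantees that $K_i$ is null homotopic, as required. The pattern is to be chosen (this is where an explicit picture enters) so that the lift of $K_i$ to a suitable cover $\pi\colon\wt Y\to Y$, pushed into $S^3$ via the Boden--Nagel embedding of Lemma~\ref{lem:boden-nagel}, is the Bing double $\BD(J_i)$: the two strands of $K_i$ running alongside $\gamma$ are separated by a full period in the cover and clasp one another there, with $J_i$ reappearing tied into the resulting clasp. (This is cleanest in the illustrative case $Y=\R P^3$, where the relevant cover has degree two; in general one argues with the analogous multi--component covering link visible inside the Boden--Nagel image.) The crucial feature is that the components of $\BD(J_i)$ are \emph{unknots} for every $i$, so Proposition~\ref{prop:almostimpliesconc} applies uniformly. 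Topological concordance of the whole family is then immediate: each $J_i$ has Alexander polynomial~$1$ and so is topologically slice by \cite{MR666159}; since the $K_i$ arise by tying these mutually topologically concordant knots into one fixed pattern on $\gamma$, the knots $K_i$ are all topologically concordant to one another (indeed to the double built from the unknot).

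It then remains to show that $\BD(J_i)$ and $\BD(J_j)$ are \emph{not} smoothly concordant for $i\neq j$, and this is the step I expect to be the main obstacle. The idea is a second covering--link manoeuvre: a smooth link concordance $\BD(J_i)\sim\BD(J_j)$ should be lifted to a cyclic branched cover adapted to the Bing double, where the clasped strands assemble into a cable of $J_i$ respectively $J_j$; applying Van Cott's cabling formula (Proposition~\ref{prop:vancottcable}) one obtains a smooth concordance between cables whose $\tau$ invariants are affine functions of $i$ and of $j$, and hence must be equal, contradicting $i\neq j$. Carrying this out rigorously requires (a) verifying the covering--link computation identifying the lift of $K_i$ with $\BD(J_i)$, including the bookkeeping of components and of the induced covering degrees when $\pi_1(Y)$ is infinite and one only sees part of the cover inside the Boden--Nagel image, and (b) making precise the branched--cover/cabling reduction that converts a concordance of Bing doubles into a $\tau$--detectable concordance of knots. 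Once these two technical points are in place, the chain ``almost concordance $\Rightarrow$ (Proposition~\ref{prop:almostimpliesconc}) link concordance of covering links $\Rightarrow$ (cabling and Van Cott) equality of $\tau$'' shows that the $K_i$ are mutually distinct in smooth almost concordance, completing the proof.
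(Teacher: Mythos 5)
Your overall strategy --- null homotopic satellites of an essential curve with a winding number zero doubling pattern, Proposition~\ref{prop:almostimpliesconc} to upgrade smooth almost concordance to smooth concordance of covering links, identification of a two--component piece of the covering link with $\BD(J_i)$ via the Boden--Nagel embedding, then $\tau$ --- is indeed the paper's strategy, but only for its \emph{generic} case, and your sketch has a genuine gap exactly where the paper is forced to split into cases. The identification of a covering double with the Bing double $\BD(J_i)$ (Proposition~\ref{prop:BD}) requires the deck transformation $t$ determined by $[\gamma]$ to have order \emph{greater than two}. If $t$ has order two, then $t(\wt K)=t^{-1}(\wt K)$, so in the daisy chain of Figure~\ref{fig:daisychain} the two lifts clasp each other \emph{twice}, once on each side, and the resulting two--component link is not a Bing double (compare the covering link $L_J$ of Figure~\ref{fig:rp3knot}). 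So your parenthetical claim that the argument ``is cleanest in the illustrative case $Y=\R P^3$, where the relevant cover has degree two'' is exactly backwards: $\R P^3$ is the unique $Y\neq S^3$ whose fundamental group has no element of order greater than two (the paper proves this via geometrisation), and it is precisely the case in which your Bing--double identification fails. The paper handles it by a separate argument in Section~\ref{sec:RP3}: there one takes the double branched cover of $S^3$ over one unknotted component of $L_J$, identifies the lift of the other component as $2J\#2J^r$, and only then applies $\tau$ (Proposition~\ref{prop:exceptionalcase}). Relatedly, in the general case you cannot take an arbitrary essential curve $\gamma$; you must \emph{choose} one whose class has order greater than two, which exists for every $Y\neq S^3,\R P^3$, and your write-up never imposes or uses this hypothesis.

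The second gap is in the step you yourself flag as the main obstacle: showing $\BD(J_i)$ and $\BD(J_j)$ are not smoothly concordant. Your plan --- lift to a cyclic branched cover in which ``the clasped strands assemble into a cable'' and then apply Van Cott (Proposition~\ref{prop:vancottcable}) --- does not compute correctly. The double cover of $S^3$ branched over one (unknotted) component of $\BD(J)$ does not produce a cable of $J$: the preimage of the other component is a two--component link, each component of which is $J\#J^r$. This is the Cha--Livingston--Ruberman observation the paper uses in Proposition~\ref{prop:heddenfamily}. Given a smooth concordance of Bing doubles, branching over one annulus of the concordance yields only a \emph{rational} homology $S^3\times[0,1]$, inside which one component of the lift of the other annulus is a rational smooth concordance from $J\#J^r$ to $J'\#(J')^r$; since $\tau$ is a rational concordance invariant, a homomorphism, and unchanged by orientation reversal, one concludes $\tau(J)=\tau(J')$ directly. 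No cabling formula is needed or applicable here --- Van Cott enters the paper only in the lens space arguments for Theorem~\ref{thm:B}. With these two corrections (restricting the generic argument to classes of order greater than two and supplying the separate $\R P^3$ argument; replacing the cable/Van Cott step by the branched cover to $J\#J^r$ and rational $\tau$), your outline becomes the paper's proof.
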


\noindent The theorem will be divided into two cases:\begin{enumerate}
\item The case when $Y\neq \R P^3$, proved in Proposition~\ref{prop:heddenfamily}.
\item The case when $Y=\R P^3$, proved in Proposition~\ref{prop:exceptionalcase}.
\end{enumerate}

\subsection{Proof of Theorem~\ref{thm:A} when $Y\neq \R P^3$}

\begin{definition}
Suppose $K\subset Y$ and $J\subset S^3$ are knots and that $\psi$ is a choice of framing for $K$. Then we define a knot $K_J\subset Y$ as the image of the knot~$P_J\subset S^1\times D^2$ from Figure~\ref{fig:pattern1} under the embedding $S^1\times D^2\subset Y$ determined by the framing~$\psi$. (The framing is intentionally suppressed from the notation, for brevity.)\begin{figure}[h]

\begin{tikzpicture}

\begin{scope}[shift={(0,0)}, yscale=0.7,xscale=0.7]\begin{knot}[
clip width=5,
]

\strand [black, thick] (0,0.3)
to [out=up, in=left, looseness=0.8] (1,1.4)
to [out=right, in=up, looseness=0.8] (2.3,1.1)
to [out=down, in=right, looseness=0.8] (1,0.8)
to [out=left, in=left, looseness=0.8] (1,-0.8)
to [out=right, in=up, looseness=0.8] (2.3,-1.1)
to [out=down, in=right, looseness=0.8] (1,-1.4)
to [out=left, in=down] (0,-0.3);

\strand [black, thick] (5,0.3)
to [out=up, in=right, looseness=0.8] (4,1.2)
to [out=left, in=up, looseness=0.6] (1.7,0.9)
to [out=down, in=left, looseness=0.6] (4,0.6)
to [out=right, in=right, looseness=0.6] (4,-0.6)
to [out=left, in=up, looseness=0.6] (1.7,-0.9)
to [out=down, in=left, looseness=0.8] (4,-1.2)
to [out=right, in=down] (5,-0.3);

\strand [black, thick] (5,0.3)
to [out=right, in=right, looseness=3] (2.5,-3.4)
to [out=left, in=left, looseness=3] (0,0.3);

\strand [black, thick] (5,-0.3)
to [out=right, in=right, looseness=3] (2.5,-2.8)
to [out=left, in=left, looseness=3] (0,-0.3);

\strand [black, ultra thin] (9,-1)
to [out=down, in=right, looseness=0.8] (2.5,-4)
to [out=left, in=down, looseness=0.8] (-4,-1)
to [out=up, in=left, looseness=0.8] (2.5,1.8)
to [out=right, in=up, looseness=0.8] (9,-1);

\strand [black, ultra thin] (4,-2)
to [out=250, in=0, looseness=0.3] (3,-2.3)
to [out=180, in=0, looseness=0.5] (2.5,-2.32)
to [out=180, in=0, looseness=0.5] (2,-2.3)
to [out=180, in=290, looseness=0.3] (1,-2);

\strand [black, ultra thin] (3,-2.3)
to [out=110, in=0] (2.5,-2)
to [out=180, in=70] (2,-2.3);

\flipcrossings{2,3}
\end{knot}

\begin{scope}[xscale=0.5, yscale=0.5, shift={(7,2.45)}]
\draw[thick] (-0.5, 0.5) -- (0,0);
\draw[thick] (-0.5, -0.5) -- (0,0);
\end{scope}

\draw [thick, draw=black, fill=white]
       (2.7,-1.5) -- (2.7,-0.4) -- (3.8,-0.4) -- (3.8,-1.5) -- cycle;
\node at (3.25,-0.95) {$J$};

\end{scope}

\end{tikzpicture}

\caption{A knot $P_J$ in the solid torus, where the $J$--box denotes tying the strands into the knot $J$.}
\label{fig:pattern1}
\end{figure}
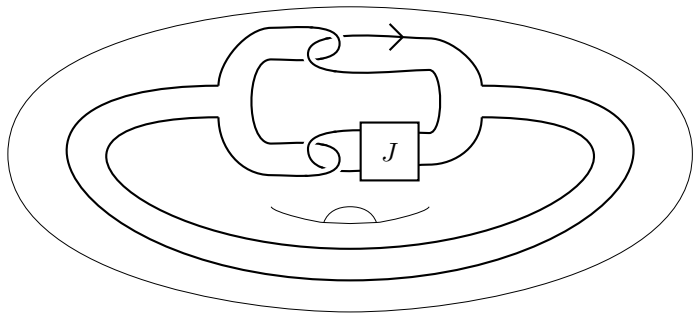

\end{definition}

We may undo the clasps of the knot $P_J$ in Figure~\ref{fig:pattern1} by means of a free homotopy, showing that $P_J$ is freely null homotopic in $S^1\times D^2$. It follows that each knot~$K_J$ is freely null homotopic in $Y$.

Let $K\subset Y$ be a knot which is homotopically essential and choose $J\subset S^3$ and a framing $\psi$ of $K$. In the universal cover $\pi\colon \wt{Y}\to Y$ write $t\colon \wt{Y}\to\wt{Y}$ for the deck transformation corresponding to $[K]\in\pi_1(Y)$. As $K_J$ is null homotopic, the discussion in Section~\ref{subsec:coveringlinks} implies that for each component $\wt{K}$ of $\pi^{-1}(K_J)$, the degree of the covering $\wt{K}\to K_J$ is 1. Moreover, as $[K]$ is nontrivial in $\pi_1(Y)$, the deck transformation $t$ is nontrivial. This justifies the following definition.

\begin{definition}
Any choice of 2--component link $\wt{K} \cup t(\wt{K})\subset \pi^{-1}(K_J)\subset \wt{Y}$ is called a \emph{covering double} of $K_J$.
\end{definition}

\noindent Here is a key observation.

\begin{proposition}\label{prop:BD}Let $K\subset Y$ and let $\pi\colon \wt{Y}\to Y$ be the universal cover. Write the deck transformation corresponding to $[K]\in\pi_1(Y)$ as $t\colon \wt{Y}\to \wt{Y}$ and assume that the order of $t$ is greater than two. Let $\psi$ be any framing of $K$, let $J\subset S^3$ be a knot, and let $\wt{K}\cup t(\wt{K})$ be a covering double of $K_J$. Remove a $3$--disc from~$Y$ away from~$K_J$ and embed $\phi\colon \pi^{-1}(\cl(Y\sm D^3))\hookrightarrow S^3$ using Lemma~\ref{lem:boden-nagel}. Then the~$2$--component link $\phi(\wt{K}\cup t(\wt{K}))\subset S^3$ is the Bing double $\BD(J)$.
\end{proposition}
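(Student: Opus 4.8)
The plan is to understand precisely what the covering double looks like near the knot $K_J$, and then to show that the embedding $\phi$ of Lemma~\ref{lem:boden-nagel} carries the local picture to the standard Bing double diagram. The key geometric input is that $K_J$ is constructed as the image of the pattern $P_J \subset S^1 \times D^2$ from Figure~\ref{fig:pattern1}, embedded into $Y$ via the framing $\psi$ of $K$. Since $P_J$ is freely null homotopic in $S^1 \times D^2$ (as noted just after the definition of $K_J$), each component of the preimage $\pi^{-1}(K_J)$ maps homeomorphically down to $K_J$, so each lifted copy $\wt{K}$ is itself a copy of the pattern $P_J$ sitting inside a lift of the solid torus neighbourhood $S^1 \times D^2 \subset Y$.

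First I would set up the local model in the universal cover. The solid torus $S^1 \times D^2$ along which $K_J$ is framed lifts, in $\wt{Y}$, to a string of copies of $D^1 \times D^2$ (arcs times the disc) indexed by the cyclic subgroup generated by $t$; consecutive copies are glued along their disc ends, and the deck transformation $t$ shifts this string by one step. The crucial point is that the pattern $P_J$ has two clasps that reach out across the $S^1$ direction of the solid torus: when we pass to the cover, the component $\wt{K}$ is an arc that, near its two ends, clasps into the adjacent lifted copies of the solid torus — in particular into $t(\wt{K})$ and into $t^{-1}(\wt{K})$. The hypothesis that the order of $t$ is greater than two is exactly what guarantees that $t(\wt{K})$, $\wt{K}$, and $t^{-1}(\wt{K})$ are three genuinely distinct components, so that when we isolate the two-component sublink $\wt{K} \cup t(\wt{K})$, the relevant clasping occurs between exactly these two components in the expected way, and the $J$-box on $\wt{K}$ contributes a single copy of $J$ tied into one component.

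Next I would identify the resulting two-component configuration with the Bing double. Recall from Section~\ref{subsec:coveringlinks} that tying $J$ into $K_J$ results in tying $d(\wt{K}) = 1$ copy of $J$ into the component $\wt{K}$, so exactly one of the two components carries a $J$-box and the other is an unknotted clasping loop; comparing with Figure~\ref{fig:BDWh}(a), this is precisely the combinatorial description of $\BD(J)$. The embedding $\phi$ is a diffeomorphism onto its image, so it preserves the link type of $\wt{K} \cup t(\wt{K})$; thus it suffices to verify that the abstract link $\wt{K} \cup t(\wt{K}) \subset \wt{Y}$ (equivalently, its image under $\phi$ in $S^3$) is the standard Bing double, which follows from matching the clasp pattern of $P_J$ to the two clasps in the Bing double diagram.

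The main obstacle I anticipate is bookkeeping the clasps correctly under the covering map, and in particular confirming that the hypothesis $\mathrm{order}(t) > 2$ is used in an essential way (if $t$ had order $2$, then $t^{-1}(\wt K) = t(\wt K)$ and the two clasps of $P_J$ would both land on the \emph{same} second component, changing the link type away from the Bing double — indeed this order-$2$ case is treated separately in the paper). I would therefore be careful to draw the lift of $P_J$ explicitly in the string of solid tori and track which lifted solid torus each clasp enters, checking the signs/handedness of the clasps against Figure~\ref{fig:BDWh}(a). Once the local diagrammatic match is established, invariance of link type under the diffeomorphism $\phi$ finishes the argument with no further difficulty.
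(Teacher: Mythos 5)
Your overall strategy is the same as the paper's (lift the pattern to the chain of solid tori, restrict to two adjacent components, and match the result with Figure~\ref{fig:BDWh}(a)), and your reading of the order~$>2$ hypothesis is exactly right. However, your final step contains a genuine gap, and it is precisely the point where the paper has to do work. You write that since $\phi$ is a diffeomorphism onto its image it ``preserves the link type'' of $\wt{K}\cup t(\wt{K})$, so that it suffices to identify the abstract link in $\wt{Y}$. That is not a valid principle: a link in $N=\pi^{-1}(\cl(Y\sm D^3))$ has no well-defined ``link type in $S^3$'' independent of the chosen embedding --- for instance, a solid torus embeds in $S^3$ in knotted ways, and the $S^3$--isotopy class of a link it contains depends on that choice. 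Lemma~\ref{lem:boden-nagel} only provides \emph{existence} of some embedding $\phi$, with no control over it. The paper's proof confronts this head on: Figure~\ref{fig:daisychain} depicts $\phi(\pi^{-1}(K_J))$ with ``?''--boxes recording the unknown knotting that $\phi$ may introduce in the bands joining consecutive units, and the proof then argues that this knotting occurs band-wise and hence can be removed by an isotopy of $S^3$ once only two components are retained. Your argument needs this step, or an equivalent one: e.g., isotope the two-component sublink \emph{inside} $\wt{Y}$ into a $3$--ball in which it is the standard $\BD(J)$ diagram, push that isotopy forward through $\phi$, and invoke uniqueness of ball embeddings (the local-knot lemma at the start of Section~\ref{sec:basics}); note that when $t$ has order exactly $3$ the three units met by the sublink close up into a solid torus rather than a ball, so even this route requires a little care.

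A second, more minor inaccuracy: it is not true that ``exactly one of the two components carries a $J$--box.'' Every component of $\pi^{-1}(K_J)$ passes through exactly one $J$--box, so both components of the covering double do. What is true is that one of these two $J$--boxes sits on a finger which, once the third component is discarded, clasps nothing; because the two strands through a $J$--box run parallel (this is a winding-number-zero doubling operation, \emph{not} a connected sum, so the $\wt{K}\# d(\wt{K})J$ discussion from Section~\ref{subsec:coveringlinks} that you cite is not the relevant mechanism), that free finger bounds an embedded band and can be retracted by an isotopy. Only after this retraction, together with the analogous retraction of the other component's free loop, does the sublink take the form of Figure~\ref{fig:BDWh}(a) with a single effective copy of $J$. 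Your proof should include these isotopies explicitly rather than asserting the one-box configuration at the outset.
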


\begin{proof}
See Figure~\ref{fig:daisychain} for a picture of the embedded preimage $\phi(\pi^{-1}(K_J))\subset S^3$. The ?--boxes indicate the embedding $\phi$ may knot the `bands' between the components in an unknown manner in $S^3$. But as this unknown knotting occurs band-wise it can be removed by an isotopy when we restrict to a (2--component) covering double.
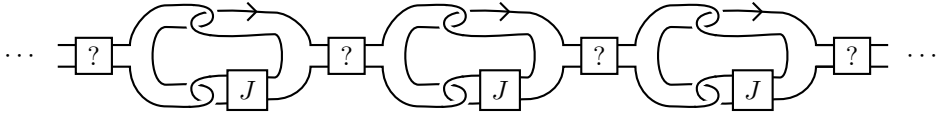
\begin{figure}[h]

\begin{tikzpicture}[scale=0.48]

\begin{scope}[shift={(0,0)}]

\begin{scope}[xscale=0.5, yscale=0.5, shift={(7,2.45)}]
\draw[thick] (-0.5, 0.5) -- (0,0);
\draw[thick] (-0.5, -0.5) -- (0,0);
\end{scope}

\begin{knot}[
clip radius=7pt,
clip width=5,
]

\strand [black, thick] (-2,0.3)
to [out=right, in=left] (0,0.3)
to [out=up, in=left, looseness=0.8] (1,1.4)
to [out=right, in=up, looseness=0.8] (2.3,1.1)
to [out=down, in=right, looseness=0.8] (1,0.8)
to [out=left, in=left, looseness=0.8] (1,-0.8)
to [out=right, in=up, looseness=0.8] (2.3,-1.1)
to [out=down, in=right, looseness=0.8] (1,-1.4)
to [out=left, in=down] (0,-0.3)
to [out=left, in=right] (-2,-0.3);

\strand [black, thick] (6,0.3)
to [out=left, in=right] (5,0.3)
to [out=up, in=right, looseness=0.8] (4,1.2)
to [out=left, in=up, looseness=0.6] (1.7,0.9)
to [out=down, in=left, looseness=0.6] (4,0.6)
to [out=right, in=right, looseness=0.6] (4,-0.6)
to [out=left, in=up, looseness=0.6] (1.7,-0.9)
to [out=down, in=left, looseness=0.8] (4,-1.2)
to [out=right, in=down] (5,-0.3)
to [out=right, in=left] (6,-0.3);

\flipcrossings{2,3}
\end{knot}

\draw [thick, draw=black, fill=white]
       (2.7,-1.5) -- (2.7,-0.4) -- (3.8,-0.4) -- (3.8,-1.5) -- cycle;
\node at (3.25,-0.95) {$J$};

\end{scope}

\begin{scope}[shift={(7,0)}]

\begin{scope}[xscale=0.5, yscale=0.5, shift={(7,2.45)}]
\draw[thick] (-0.5, 0.5) -- (0,0);
\draw[thick] (-0.5, -0.5) -- (0,0);
\end{scope}

\begin{knot}[
clip radius=7pt,
clip width=5,
]

\strand [black, thick] (-1,0.3)
to [out=right, in=left] (0,0.3)
to [out=up, in=left, looseness=0.8] (1,1.4)
to [out=right, in=up, looseness=0.8] (2.3,1.1)
to [out=down, in=right, looseness=0.8] (1,0.8)
to [out=left, in=left, looseness=0.8] (1,-0.8)
to [out=right, in=up, looseness=0.8] (2.3,-1.1)
to [out=down, in=right, looseness=0.8] (1,-1.4)
to [out=left, in=down] (0,-0.3)
to [out=left, in=right] (-1,-0.3);

\strand [black, thick] (6,0.3)
to [out=left, in=right] (5,0.3)
to [out=up, in=right, looseness=0.8] (4,1.2)
to [out=left, in=up, looseness=0.6] (1.7,0.9)
to [out=down, in=left, looseness=0.6] (4,0.6)
to [out=right, in=right, looseness=0.6] (4,-0.6)
to [out=left, in=up, looseness=0.6] (1.7,-0.9)
to [out=down, in=left, looseness=0.8] (4,-1.2)
to [out=right, in=down] (5,-0.3)
to [out=right, in=left] (6,-0.3);

\flipcrossings{2,3}
\end{knot}

\draw [thick, draw=black, fill=white]
       (2.7,-1.5) -- (2.7,-0.4) -- (3.8,-0.4) -- (3.8,-1.5) -- cycle;
\node at (3.25,-0.95) {$J$};

\end{scope}

\begin{scope}[shift={(14,0)}]

\begin{scope}[xscale=0.5, yscale=0.5, shift={(7,2.45)}]
\draw[thick] (-0.5, 0.5) -- (0,0);
\draw[thick] (-0.5, -0.5) -- (0,0);
\end{scope}

\begin{knot}[
clip radius=7pt,
clip width=5,
]

\strand [black, thick] (-1,0.3)
to [out=right, in=left] (0,0.3)
to [out=up, in=left, looseness=0.8] (1,1.4)
to [out=right, in=up, looseness=0.8] (2.3,1.1)
to [out=down, in=right, looseness=0.8] (1,0.8)
to [out=left, in=left, looseness=0.8] (1,-0.8)
to [out=right, in=up, looseness=0.8] (2.3,-1.1)
to [out=down, in=right, looseness=0.8] (1,-1.4)
to [out=left, in=down] (0,-0.3)
to [out=left, in=right] (-1,-0.3);

\strand [black, thick] (6,0.3)
to [out=left, in=right] (5,0.3)
to [out=up, in=right, looseness=0.8] (4,1.2)
to [out=left, in=up, looseness=0.6] (1.7,0.9)
to [out=down, in=left, looseness=0.6] (4,0.6)
to [out=right, in=right, looseness=0.6] (4,-0.6)
to [out=left, in=up, looseness=0.6] (1.7,-0.9)
to [out=down, in=left, looseness=0.8] (4,-1.2)
to [out=right, in=down] (5,-0.3)
to [out=right, in=left] (6,-0.3);

\flipcrossings{2,3}
\end{knot}

\draw [thick, draw=black, fill=white]
       (2.7,-1.5) -- (2.7,-0.4) -- (3.8,-0.4) -- (3.8,-1.5) -- cycle;
\node at (3.25,-0.95) {$J$};

\end{scope}

%
%
%
%
%

\draw[thick, draw=black] (19,0.3)
to [out=right, in=left] (21,0.3);

\draw[thick, draw=black] (19,-0.3)
to [out=right, in=left] (21,-0.3);

\begin{scope}[shift={(-1,0)}]
\draw [thick, draw=black, fill=white]
       (-0.5,-0.5) -- (0.5,-0.5) -- (0.5,0.5) -- (-0.5,0.5) -- cycle;
\node at (0,0) {?};
\end{scope}

\begin{scope}[shift={(6,0)}]
\draw [thick, draw=black, fill=white]
       (-0.5,-0.5) -- (0.5,-0.5) -- (0.5,0.5) -- (-0.5,0.5) -- cycle;
\node at (0,0) {?};
\end{scope}

\begin{scope}[shift={(13,0)}]
\draw [thick, draw=black, fill=white]
       (-0.5,-0.5) -- (0.5,-0.5) -- (0.5,0.5) -- (-0.5,0.5) -- cycle;
\node at (0,0) {?};
\end{scope}

\begin{scope}[shift={(20,0)}]
\draw [thick, draw=black, fill=white]
       (-0.5,-0.5) -- (0.5,-0.5) -- (0.5,0.5) -- (-0.5,0.5) -- cycle;
\node at (0,0) {?};
\end{scope}

\node at (-3,0) {$\cdots$};

\node at (22,0) {$\cdots$};

\end{tikzpicture}
\caption{The preimage $\pi^{-1}(K_J)$ embedded in $S^3$ via $\phi$. The $\text{?}$--boxes indicate the possibility that $\phi$ may knot the `bands' in an unknown manner.}
\label{fig:daisychain}
\end{figure}

\end{proof}

\noindent The following corollary is immediate.

\begin{corollary}Assume the hypotheses of Proposition~\ref{prop:BD} and let $J_1, J_2$ be knots in $S^3$. If the covering doubles $\wt{K_i}\cup t(\wt{K_i})$ for $K_{J_i}$, $i=1,2$ are concordant to one another  in $\wt{Y}$, then $\BD(J_1)$ is concordant to $\BD(J_2)$.
\end{corollary}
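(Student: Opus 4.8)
The plan is to push the assumed concordance down into $S^3$ using the Boden--Nagel embedding, exactly as in the proofs of Propositions~\ref{prop:CtoCYinj} and~\ref{prop:almostimpliesconc}, and then to read off its two boundary links as Bing doubles by invoking Proposition~\ref{prop:BD}. Write $C\subset \wt{Y}\times I$ for the given concordance from $\wt{K_1}\cup t(\wt{K_1})$ to $\wt{K_2}\cup t(\wt{K_2})$, and let $p\colon \wt{Y}\times I\to\wt{Y}$ denote the projection. Since $C$ is a compact surface, its shadow $\pi(p(C))\subset Y$ is a compact subset of topological dimension at most~$2$, and hence has empty interior in the $3$--manifold $Y$.

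First I would choose a $3$--disc $D^3\subset Y$ lying in the open dense complement $Y\sm\pi(p(C))$. By construction $C$ avoids $\pi^{-1}(D^3)\times I$, so it lies in $N\times I$ for $N:=\cl(\wt{Y\sm D^3})$; moreover $D^3$ is automatically disjoint from $K_{J_1}$ and $K_{J_2}$, since these are precisely the shadows of the two boundary components of $C$. Applying Lemma~\ref{lem:boden-nagel} to produce an embedding $\phi\colon N\hookrightarrow S^3$, the map $\phi\times\Id\colon N\times I\hookrightarrow S^3\times I$ then carries $C$ to a genuine concordance in $S^3\times I$ between $\phi(\wt{K_1}\cup t(\wt{K_1}))$ and $\phi(\wt{K_2}\cup t(\wt{K_2}))$. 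Finally, Proposition~\ref{prop:BD}, applied with this very same $D^3$ and $\phi$ to each of $K_{J_1}$ and $K_{J_2}$ in turn, identifies these two boundary links as $\BD(J_1)$ and $\BD(J_2)$ respectively, yielding the desired concordance.

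As the paper already advertises, the argument is essentially immediate, and the only point demanding any care is the one highlighted above: the disc $D^3$ must be selected disjoint from the \emph{entire} concordance $C$, not merely from its two boundary links, so that a \emph{single} Boden--Nagel embedding $\phi$ can be used simultaneously to transport the concordance and to recognise both of its ends via Proposition~\ref{prop:BD}. I expect no real obstacle beyond checking that $\pi(p(C))$ is nowhere dense, which is immediate from the fact that a concordance is a $2$--dimensional object sitting inside the $3$--manifold $Y$.
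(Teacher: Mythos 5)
Your outline is indeed the one the paper intends when it calls the corollary ``immediate'': pick a single $3$--disc $D^3\subset Y$ whose preimage times $I$ misses the given concordance $C$, transport $C$ by one Boden--Nagel embedding $\phi\times\Id$, and identify both ends as $\BD(J_1)$ and $\BD(J_2)$ by applying Proposition~\ref{prop:BD} with that same $D^3$ and $\phi$. The gap sits exactly at the step you yourself flag as the only delicate one. You justify the existence of $D^3$ by asserting that, since $C$ is a compact surface, its shadow $\pi(p(C))$ has topological dimension at most $2$ and hence empty interior. But topological dimension is \emph{not} monotone under continuous maps---continuous images of compacta can have strictly larger dimension (Peano curves)---and $p|_C$ is merely continuous. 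This is a genuine failure, not a technicality: if $v\colon[0,1]\to[0,1]^3$ is a Peano curve with $v(0)=v(1)=0$, then $(x,t)\mapsto(x+v(t),t)$ is a level-preserving homeomorphism of a chart $\R^3\times I$, and composing any locally flat concordance with such a homeomorphism (damped off near the boundary of the chart) fixes its ends, keeps it locally flat, but makes its shadow contain an open subset of the $3$--manifold. So in the topological category---which the corollary covers, since it is stated for whichever $\CAT$ one fixes, exactly as in Proposition~\ref{prop:almostimpliesconc}---the concordance you are handed may have $\pi(p(C))$ equal to all of $Y$, and the disc you need simply does not exist; nothing in your argument repairs this, because you work with the given $C$ throughout.

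In the smooth category the conclusion of your step is true, but for a reason you do not give: $\pi\circ p|_C$ is then a smooth map from a compact surface into a $3$--manifold, so by Sard's theorem its image has measure zero, hence empty interior; compactness and two-dimensionality of the source alone prove nothing. It is worth noting that the paper never argues via shadows' dimension: in Propositions~\ref{prop:CtoCYinj} and~\ref{prop:almostimpliesconc} the disc is produced by an isotopy device (push an arc off the concordance, straighten it in the complement to a vertical arc $\pt\times[0,1]$, then thicken), which works in both categories. To repair your proof, either restrict to $\CAT=\Diff$---the only case the paper actually uses, in Lemma~\ref{lem:mostwork}---and cite Sard, or, in the topological case, first move $C$ by an ambient isotopy rel its ends so that it misses the vertical lines $\pi^{-1}(y)\times I$ over a point $y$ avoiding $K_{J_1}\cup K_{J_2}$ (this requires topological transversality or an arc-straightening argument, not a dimension count); compactness of $C$ together with a $\pi$--invariant metric then yields a disc $D^3\ni y$ with $\pi^{-1}(D^3)\times I$ disjoint from the isotoped concordance, after which your transport-and-identify step goes through verbatim.
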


Note that the conclusion of Proposition~\ref{prop:BD} is independent of the choice of $\psi$ that was made.

\begin{lemma}\label{lem:mostwork}Let $K\subset Y$ such that $[K]\in\pi_1(Y)$ is of order greater than 2, and fix a framing $\psi$ for $K$. For any two knots $J,J'\subset S^3$, if $K_J,K_{J'}\subset Y$ are smoothly almost concordant then the Bing doubles~$\BD(J)$ and $\BD(J')$ are smoothly concordant.
\end{lemma}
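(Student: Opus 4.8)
The plan is to push the almost concordance up into the universal cover $\pi\colon \wt{Y}\to Y$, pair up the lifted components using the deck transformation $t$ corresponding to $[K]$, and thereby reduce to Proposition~\ref{prop:BD} and the corollary immediately following it. By definition of smooth almost concordance there is a local knot $L\subset S^3$ and a smooth concordance $A\subset Y\times I$ from $K_J\# L$ to $K_{J'}$. As in the proof of Proposition~\ref{prop:CtoCYinj}, I would choose a $3$--disc $D^3\subset Y$ disjoint from the projection of $A$ to $Y$, so that $A$, the knots $K_J\# L$ and $K_{J'}$, and all of their lifts are disjoint from $\pi^{-1}(D^3)$. Everything then lives in the universal cover $N$ of $Y\sm D^3$, which I embed into $S^3$ via the map $\phi$ of Lemma~\ref{lem:boden-nagel}.

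Next I would lift and symmetrise. The preimage $\pi^{-1}(A)$ is invariant under $t\times\Id$, and by the discussion of Section~\ref{subsec:coveringlinks} each of its components is a concordance. Pick one component $\wt{A}$; its negative end is a lift $\wt{K}$ of $K_J\# L$ and its positive end is a lift $\wt{K'}$ of $K_{J'}$. Since $[K]\neq e$ the lift $t\wt{K}$ is distinct from $\wt{K}$, so $\wt{A}$ and $t\wt{A}$ are distinct, hence disjoint, components of $\pi^{-1}(A)$, and $\wt{A}\cup t\wt{A}$ is a smooth concordance between the two--component links $\wt{K}\cup t\wt{K}$ and $\wt{K'}\cup t\wt{K'}$. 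The latter is a covering double of $K_{J'}$, so since $t$ has order greater than two, Proposition~\ref{prop:BD} identifies its $\phi$--image with $\BD(J')$. Applying $\phi\times\Id$ to $\wt{A}\cup t\wt{A}$ thus produces a smooth concordance in $S^3\times I$ whose positive end is $\BD(J')$ and whose negative end is the $\phi$--image of the covering double $\wt{K}\cup t\wt{K}$ of $K_J\# L$.

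The remaining task, which I expect to be the main obstacle, is to identify this negative end with $\BD(J)$; this is exactly where the local knot $L$ must be controlled. The knot $K_J\# L$ is again null homotopic and carried by the solid torus: it is obtained from the pattern of Figure~\ref{fig:pattern1} by tying the extra local knot $L$ into a strand, and its concordance class does not depend on where along $K_J$ the connected sum is formed. I would therefore arrange the connected-sum site to lie on the band of the pattern running in the essential $S^1$--direction, so that in the universal cover the copies of $L$ all sit on the connecting bands of the daisy chain of Figure~\ref{fig:daisychain}. Restricting to the two--component covering double $\wt{K}\cup t\wt{K}$, these copies of $L$ then lie on precisely the bands whose unknown $\phi$--knotting is removed in the proof of Proposition~\ref{prop:BD}, and removing them by the same band-wise isotopy identifies $\phi(\wt{K}\cup t\wt{K})$ with $\BD(J)$. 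Concatenating with the concordance produced above yields a smooth concordance from $\BD(J)$ to $\BD(J')$ in $S^3$, as required. The delicate point deserving the most care is verifying that the local knots $L$ genuinely lie on removable bands, rather than being tied into the surviving components of the covering double; once this is confirmed, the remaining assembly is carried out exactly as in the corollary to Proposition~\ref{prop:BD}.
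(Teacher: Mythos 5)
Your setup is fine and parallels the paper: lifting the almost concordance, taking the $t$-equivariant pair $\wt{A}\cup t\wt{A}$, and identifying the positive end with $\BD(J')$ via Proposition~\ref{prop:BD} all work. The fatal step is exactly the one you flagged: removing the copies of $L$ by the ``same band-wise isotopy'' as the $?$--boxes. These are two different kinds of knotting. The $?$--box knotting of Proposition~\ref{prop:BD} affects both strands of a band of a \emph{single} component travelling together along a possibly knotted path; it is undone by retracting the free loop at the end of that band, and in particular it never changes the knot type of any individual component, which stays unknotted. By contrast, the copy of $L$ produced by the connected sum is tied into a \emph{single} strand: each component of $\pi^{-1}(K_J\# L)$ is a degree-one lift of $K_J\# L$, so after applying $\phi$ its knot type in $S^3$ is $(\text{unknot})\,\#\, L = L$. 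This is an isotopy invariant of the covering double, independent of where along $K_J$ you place the connect-sum site. Hence $\phi(\wt{K}\cup t\wt{K})$ has components of knot type $L$ while $\BD(J)$ has unknotted components, and no ambient isotopy can identify them whenever $L$ is knotted --- and the lemma must allow arbitrary $L$.

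The copies of $L$ have to be removed by a \emph{concordance}, not an isotopy, and for that you must first prove that $L$ is null concordant in $S^3$ --- which is not a hypothesis; all you are given is that $K_J\#L$ is concordant to $K_{J'}$ in $Y$. This is precisely the content of Proposition~\ref{prop:almostimpliesconc}, which the paper's proof invokes: since every component of $\pi^{-1}(K_J)$ and of $\pi^{-1}(K_{J'})$ is unknotted in $\wt{Y}$, a single component $\wt{A}$ of the lifted concordance runs from $\wt{K}\#L$ to $\wt{K'}$, i.e.\ from the local knot $L$ to the unknot in $\wt{Y}$; pushing this into $S^3$ by the Boden--Nagel embedding (Lemma~\ref{lem:boden-nagel}) shows $L$ is null concordant in $S^3$. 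One then implants these local null concordances to build a concordance from $\pi^{-1}(K_J\#L)$ to $\pi^{-1}(K_J)$, concatenates with $\pi^{-1}(A)$, restricts ($t$-equivariantly) to covering doubles, and only then applies Proposition~\ref{prop:BD} to both ends. So your argument is repaired by replacing your isotopy claim with an appeal to (or a reproduction of) Proposition~\ref{prop:almostimpliesconc}; as written, it proves the lemma only for unknotted $L$.
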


\begin{proof}The preimage of any $K_J$ in the universal cover $\wt{Y}$ under the covering map, has unknotted components. Suppose $K_J,K_{J'}$ are smoothly almost concordant. As their preimages in $\wt{Y}$ have unknotted components, Proposition~\ref{prop:almostimpliesconc} implies that these preimages are concordant in $\wt{Y}$. In particular, any choice of covering doubles~$L_1\cup L_2$ and $L_1'\cup L_2'$, for $K_J$ and $K_{J'}$ respectively, are concordant. So, by Proposition~\ref{prop:BD},~$\BD(J)$ is concordant to $\BD(J')$.
\end{proof}

A crucial hypothesis for Proposition~\ref{prop:BD} is that the order of the deck transformation $t$ is greater than two. The $3$--manifolds with no elements of order greater than 2 in their fundamental group form a very short list.

\begin{proposition}\label{prop:RP3}If $Y$ is a $3$--manifold, then either $\pi_1(Y)$ contains an element of order greater than 2 or $Y\in\{S^3, \,\R P^3\}$.
\end{proposition}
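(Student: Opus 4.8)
The plan is to convert the hypothesis into an algebraic constraint on $\pi_1(Y)$ and then apply the theory of free actions on spheres. Suppose $\pi_1(Y)$ has no element of order greater than $2$, so that every element satisfies $x^2=e$. Such a group is abelian, since $e=(xy)^2=xyxy$ together with $x^2=y^2=e$ forces $yx=xy$; hence $\pi_1(Y)$ is an elementary abelian $2$--group. As $Y$ is closed, $\pi_1(Y)$ is finitely generated, and a finitely generated elementary abelian $2$--group is finite, so $\pi_1(Y)\cong(\Z/2)^k$ for some $k\geq 0$.

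Next I would pass to the universal cover $\pi\colon\wt Y\to Y$. Because $\pi_1(Y)$ is finite, $\wt Y$ is a closed, simply connected $3$--manifold, and Poincar\'e duality then forces $\wt Y$ to be an integral homology $3$--sphere. The group $(\Z/2)^k$ acts freely on $\wt Y$ with quotient $Y$. At this point the key input is Smith theory: $\Z/2\times\Z/2$ cannot act freely on a $\Z/2$--homology sphere, so in particular $(\Z/2)^2$ admits no free action on $\wt Y$. This rules out every $k\geq 2$, leaving only $k\in\{0,1\}$.

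It remains to identify $Y$ in the two surviving cases, and here I would invoke the Poincar\'e conjecture to upgrade $\wt Y$ to $S^3$. If $k=0$ then $Y=\wt Y\cong S^3$. If $k=1$ then $\pi_1(Y)\cong\Z/2$ acts freely on $S^3$; by Livesay's classification every free involution of $S^3$ is conjugate to the antipodal map, whose quotient is $\R P^3$, so $Y\cong\R P^3$. This establishes the claimed dichotomy $Y\in\{S^3,\R P^3\}$.

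The argument is short, and I expect its only genuinely heavy inputs to be the classification results at the end---the Poincar\'e conjecture and Livesay's theorem on free involutions---whereas the elimination of $k\geq 2$ needs only the elementary observation that a simply connected closed $3$--manifold is automatically a homology sphere, together with Smith theory. The one step to treat with care is the finiteness of $\pi_1(Y)$: the hypothesis alone yields an elementary abelian $2$--group, and it is finite generation of the fundamental group of a closed manifold that promotes it to a finite group $(\Z/2)^k$.
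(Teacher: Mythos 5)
Your proof is correct, and its first half coincides with the paper's: the exponent--$2$ hypothesis forces $\pi_1(Y)$ to be abelian, and finite generation (compactness of $Y$) makes it a finite elementary abelian $2$--group $(\Z/2)^k$. From there the two arguments genuinely diverge. The paper applies the geometrisation theorem: the universal cover is $S^3$, the deck action may be taken isometric, and Hopf's classification of finite groups acting freely and isometrically on $S^3$ shows the only abelian possibilities are cyclic, so only $C_2$ survives, with $\R P^3$ as the unique quotient. You instead observe that the universal cover is an integral homology sphere by Poincar\'e duality alone, and eliminate every $k\geq 2$ with Smith theory, since $(\Z/2)^2$ admits no free action on a mod--$2$ homology sphere and any such subgroup of the deck group would act freely --- a purely algebro--topological step requiring no geometry. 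The price is that you still need Perelman-level input for the endgame: the Poincar\'e conjecture identifies $\wt{Y}$ with $S^3$, and then Livesay's classical theorem on free involutions of $S^3$ (or, alternatively, elliptisation) gives $Y\cong \R P^3$ when $k=1$. So your route localises the heavy geometric input to the final identification of the two surviving manifolds, whereas the paper uses geometrisation both to restrict the group and to identify the quotient; your Smith-theoretic step is the more elementary and more general way to exclude $(\Z/2)^k$, $k\geq 2$, from being a $3$--manifold group (it excludes free actions on homology spheres altogether), and Livesay's theorem is in effect the classical result underlying the paper's assertion that $\R P^3$ is the unique $3$--manifold with fundamental group $C_2$.
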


\begin{proof}
If all the nontrivial elements of $\pi_1(Y)$ are of order two then the group is abelian because for any two elements $a,b$, we have $a b a^{-1} b^{-1}=a b a b=(ab)^2=1$. Hence since $\pi_1(Y)$ is finitely generated, the group must be finite.  The universal cover of $Y$ is therefore a simply connected closed 3--manifold, thus is $S^3$ by the geometrisation theorem. The possible fundamental groups of $Y$ are then equal to the finite abelian groups that act freely and isometrically on $S^3$.  The possible finite groups were determined in~\cite[\textsection 2]{MR1512281} and are listed in \cite[Section~1.7]{MR3444187}. Of these groups, the only nontrivial abelian groups are the cyclic groups~$C_p$ for~$p \geq 2$.
 Of course only the cyclic group $C_2$ fails to contain an element of order greater than two.  In particular we point out that $\prod_m C_2$ does not occur as the fundamental group of a closed 3--manifold for $m \geq 2$, as shown in~\cite[Section~1.11,~Table~1.2]{MR3444187}.
The unique 3--manifold with fundamental group isomorphic to $C_2$ is $\R P^3$.
\end{proof}

Fix a $3$--manifold $Y$ which is equal neither to $S^3$, nor to $\R P^3$. Choose a class~$g\in\pi_1(Y)$ of order at least three. Fix a representative knot $K\subset Y$ of $g$ and choose a framing $\psi$ for $K$. Write $D=\Wh^+(T_{2,3})$ and define a family of knots
\[\mathcal{F}:=\{K_J\subset Y\, |\, J\in\{nD\}_{n=1}^\infty\}.\] Recall from Example~\ref{exm:SmoothFamily} that the knots $nD$ are all topologically null concordant. Hence the knots in $\mathcal{F}$ are all topologically concordant to $K_U$, where $U\subset S^3$ is the unknot, and thus are mutually topologically concordant. In particular, the knots in $\mathcal{F}$ are mutually topologically almost concordant.

The next proposition completes the proof of Theorem~\ref{thm:A} when $Y\neq \R P^3$.

\begin{proposition}\label{prop:heddenfamily}
The knots in the infinite family $\mathcal{F}$ are mutually distinct in smooth almost concordance.
\end{proposition}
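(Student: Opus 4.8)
The plan is to reduce the statement about smooth almost concordance of the knots $K_{nD}\subset Y$ to a question about smooth concordance of Bing doubles in $S^3$, and then to obstruct the latter using the $\tau$ invariant via a covering link argument. Concretely, suppose for contradiction that $K_{nD}$ and $K_{mD}$ are smoothly almost concordant for some $n\neq m$. Since the order of the deck transformation $t$ associated to $[K]$ is at least three by our choice of $g$, Lemma~\ref{lem:mostwork} applies and tells us that the Bing doubles $\BD(nD)$ and $\BD(mD)$ are smoothly concordant in $S^3$. The goal is therefore to show that $\BD(nD)$ and $\BD(mD)$ are \emph{not} smoothly concordant when $n\neq m$, which yields the desired contradiction and proves mutual distinctness.

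The key idea for obstructing concordance of the Bing doubles is the standard relationship between the Bing double $\BD(J)$ and the Whitehead double $\Wh^\pm(J)$: fusing the two components of $\BD(J)$ along a band, or equivalently taking an appropriate cover or infection view, recovers a Whitehead double. More precisely, I would pass to a suitable branched or unbranched cover, or use the fact that a concordance of Bing doubles induces a concordance of their untwisted doubles, to deduce that a smooth concordance $\BD(nD)\sim \BD(mD)$ would force a smooth concordance between $\Wh^+(nD)$ and $\Wh^+(mD)$ (up to the relevant cabling/doubling operation). At that stage the $\tau$ invariant becomes applicable. Using Hedden's Theorem~\ref{thm:hedden} together with the computation $\tau(nD)=n\cdot\tau(D)=n>0$ from Example~\ref{exm:SmoothFamily}, one sees that the relevant doubles have $\tau=1$ regardless of $n$, so I would instead need to extract a finer invariant that separates different values of $n$ — for instance by applying Proposition~\ref{prop:vancottcable} (Van Cott's cabling formula) to a cable of the doubled knot, where the coefficient $r\cdot\tau(J)$ records $n$ linearly.

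The main obstacle is precisely this last point: $\tau$ of a Whitehead double is insensitive to the magnitude of $\tau(J)$ once it is positive, so a naive application of Hedden's theorem cannot distinguish $nD$ from $mD$. The crucial step will be to arrange the covering link argument so that the invariant sees $\tau(nD)=n$ directly rather than through the saturating Whitehead-doubling operation. I expect this requires choosing the framing $\psi$ and the pattern $P_J$ carefully so that the covering-double-to-Bing-double correspondence of Proposition~\ref{prop:BD}, followed by a cabling operation satisfying the genus hypothesis $g_3=\tau$, produces a knot whose $\tau$ invariant is a strictly increasing (indeed affine) function of $n$. Once such an invariant is in hand, distinctness is immediate: if $K_{nD}\sim K_{mD}$ in smooth almost concordance then the associated $\tau$ values coincide, forcing $n=m$. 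I would close by remarking that the knots $K_{nD}$ all lie in the fixed free homotopy class $e$ (shown null homotopic earlier), and are mutually topologically concordant since the $nD$ have Alexander polynomial one, so the family indeed realises the claimed infinite smooth-versus-topological disparity.
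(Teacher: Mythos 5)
Your reduction is the same as the paper's: assuming $K_{nD}$ and $K_{mD}$ are smoothly almost concordant, Lemma~\ref{lem:mostwork} (which applies because the class $[K]$ was chosen of order at least three) gives a smooth concordance from $\BD(nD)$ to $\BD(mD)$. But the heart of the proof --- showing no such concordance exists when $n \neq m$ --- is missing from your proposal, and you essentially say so yourself. The route you sketch (fuse the Bing double into a Whitehead double, then apply Hedden's Theorem~\ref{thm:hedden}) fails for exactly the reason you identify: $\tau(\Wh^+(J)) = 1$ whenever $\tau(J) > 0$, so it cannot separate $nD$ from $mD$. It also has a second defect you do not mention: a concordance of links does not induce a concordance of band-sums of their components, since the band is extra data not controlled by the concordance. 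Your fallback --- re-choosing the framing $\psi$ and the pattern $P_J$ and invoking Van Cott's formula --- is speculation rather than proof: the family $\mathcal{F}$ is fixed in the statement, so you cannot redesign the construction at this stage, and no specific cabling operation on the $2$--component link $\BD(J)$ is exhibited or shown to satisfy the hypotheses of Proposition~\ref{prop:vancottcable}.

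The missing idea, which the paper takes from Cha-Livingston-Ruberman, is a branched covering argument that lets $\tau$ see $\tau(J)$ linearly instead of through the saturating doubling operation. Each component of $\BD(J)$ is unknotted, so the double cover of $S^3$ branched over one component is again $S^3$, and the preimage of the other component is a $2$--component link each of whose components is $J \# J^r$. Given a smooth concordance $A_1 \cup A_2 \subset S^3 \times [0,1]$ from $\BD(nD)$ to $\BD(mD)$, take the double cover of $S^3 \times [0,1]$ branched over $A_2$; a Mayer-Vietoris argument shows this cover is a rational homology $S^3 \times [0,1]$, so a single component of the preimage of $A_1$ is a \emph{rational} smooth concordance from $nD \# (nD)^r$ to $mD \# (mD)^r$. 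Since $\tau$ is a rational concordance invariant, a homomorphism, and unchanged by orientation reversal, this forces $2\tau(nD) = 2\tau(mD)$, hence $n = m$ by Example~\ref{exm:SmoothFamily}, a contradiction. This is precisely the ``invariant affine in $n$'' you were hoping to manufacture, but it comes from branched covers and rational concordance, not from cabling.
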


\begin{proof}
We recall an observation from Cha-Livingston-Ruberman \cite{Cha-Livingston-Ruberman:2006-1}. Given a knot $J\subset S^3$, consider the double cover of $S^3$ branched over a component of the Bing double $\BD(J)$. Write $J^r$ for $J$ with the reversed orientation. The preimage of the other component of $\BD(J)$ under the covering map is a 2--component link with each component equal to $J\#J^r$. Now if there is a smooth concordance~$A_1\cup A_2\subset S^3\times [0,1]$ from $\BD(J)$ to $\BD(J')$, we may consider the double cover~$\pi\colon M\to S^3\times [0,1]$, branched over a single component of that concordance,~$A_2$, say. A Meier-Vietoris argument shows $H_*(M;\Q)=H_*(S^3;\Q)$, and hence a single component of the preimage $\pi^{-1}(A_1)$ is a rational smooth concordance from~$J\#J^r$ to~$J'\#(J')^r$. As the~$\tau$ invariant is a rational smooth concordance invariant, a homomorphism, and does not change under orientation reversal, this implies that~$\tau(J)=\tau(J')$. Combining this observation with Lemma~\ref{lem:mostwork}, it now suffices to recall from Example~\ref{exm:SmoothFamily} that the knots $nD \in S^3$, for $n>0$, have the property that~$\tau(nD)\neq \tau(mD)$ when $n\neq m$.
\end{proof}

We remark that the only features required of the family of knots from Example~\ref{exm:SmoothFamily} in order that they could be used in the proof above were that they are all topologically concordant to the unknot, and that $\tau(nD)\neq \tau(mD)$ when $n\neq m$. As such, any family with these properties could have been used to define $\mathcal{F}$ and prove the proposition.

\subsection{Proof of Theorem~\ref{thm:A} when $Y=\R P^3$}
\label{sec:RP3}
Let $J\subset S^3$ be a knot and now write~$K_J\subset \R P^3$ and $L_J\subset S^3$, to denote respectively the knot and the covering link depicted in Figure~\ref{fig:rp3knot}. The proof that $L_J$ is the covering link of $K_J$ follows from Lemma~\ref{lem:coveringlink-lens-spaces} as $\R P^3= L(2,1)$.

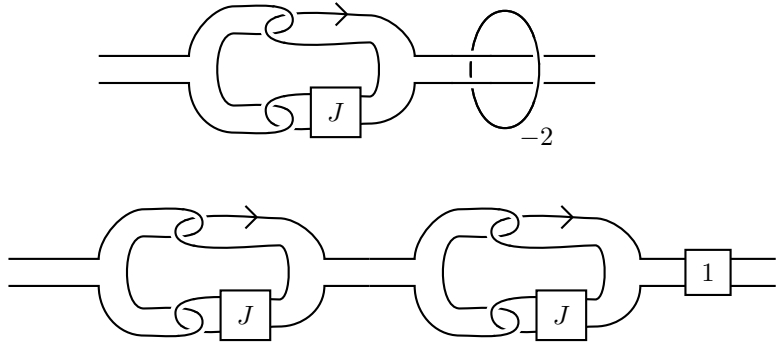
\begin{figure}[h]

\begin{tikzpicture}[scale=0.6]

\begin{scope}[shift={(2,4.5)}]

\begin{scope}[xscale=0.5, yscale=0.5, shift={(7,2.45)}]
\draw[thick] (-0.5, 0.5) -- (0,0);
\draw[thick] (-0.5, -0.5) -- (0,0);
\end{scope}

\begin{knot}[
clip radius=7pt,
clip width=5,
]

\strand [black, thick] (-2,0.3)
to [out=right, in=left] (0,0.3)
to [out=up, in=left, looseness=0.8] (1,1.4)
to [out=right, in=up, looseness=0.8] (2.3,1.1)
to [out=down, in=right, looseness=0.8] (1,0.8)
to [out=left, in=left, looseness=0.8] (1,-0.8)
to [out=right, in=up, looseness=0.8] (2.3,-1.1)
to [out=down, in=right, looseness=0.8] (1,-1.4)
to [out=left, in=down] (0,-0.3)
to [out=left, in=right] (-2,-0.3);

\strand [black, thick] (9,0.3)
to [out=left, in=right] (5,0.3)
to [out=up, in=right, looseness=0.8] (4,1.2)
to [out=left, in=up, looseness=0.6] (1.7,0.9)
to [out=down, in=left, looseness=0.6] (4,0.6)
to [out=right, in=right, looseness=0.6] (4,-0.6)
to [out=left, in=up, looseness=0.6] (1.7,-0.9)
to [out=down, in=left, looseness=0.8] (4,-1.2)
to [out=right, in=down] (5,-0.3)
to [out=right, in=left] (9,-0.3);

\strand[thick, draw=black] (7,1.3)
to [out=right, in=right] (7,-1.3)
to [out=left, in=left] (7,1.3);

\flipcrossings{2,3,5,7}
\end{knot}

\draw [thick, draw=black, fill=white]
       (2.7,-1.5) -- (2.7,-0.4) -- (3.8,-0.4) -- (3.8,-1.5) -- cycle;
\node at (3.25,-0.95) {$J$};

\node at (7.7,-1.5) {$-2$};

\end{scope}

\begin{scope}[shift={(0,0)}]

\begin{scope}[xscale=0.5, yscale=0.5, shift={(7,2.45)}]
\draw[thick] (-0.5, 0.5) -- (0,0);
\draw[thick] (-0.5, -0.5) -- (0,0);
\end{scope}

\begin{knot}[
clip radius=7pt,
clip width=5,
]

\strand [black, thick] (-2,0.3)
to [out=right, in=left] (0,0.3)
to [out=up, in=left, looseness=0.8] (1,1.4)
to [out=right, in=up, looseness=0.8] (2.3,1.1)
to [out=down, in=right, looseness=0.8] (1,0.8)
to [out=left, in=left, looseness=0.8] (1,-0.8)
to [out=right, in=up, looseness=0.8] (2.3,-1.1)
to [out=down, in=right, looseness=0.8] (1,-1.4)
to [out=left, in=down] (0,-0.3)
to [out=left, in=right] (-2,-0.3);

\strand [black, thick] (6,0.3)
to [out=left, in=right] (5,0.3)
to [out=up, in=right, looseness=0.8] (4,1.2)
to [out=left, in=up, looseness=0.6] (1.7,0.9)
to [out=down, in=left, looseness=0.6] (4,0.6)
to [out=right, in=right, looseness=0.6] (4,-0.6)
to [out=left, in=up, looseness=0.6] (1.7,-0.9)
to [out=down, in=left, looseness=0.8] (4,-1.2)
to [out=right, in=down] (5,-0.3)
to [out=right, in=left] (6,-0.3);

\flipcrossings{2,3}
\end{knot}

\draw [thick, draw=black, fill=white]
       (2.7,-1.5) -- (2.7,-0.4) -- (3.8,-0.4) -- (3.8,-1.5) -- cycle;
\node at (3.25,-0.95) {$J$};

\end{scope}

\begin{scope}[shift={(7,0)}]

\begin{scope}[xscale=0.5, yscale=0.5, shift={(7,2.45)}]
\draw[thick] (-0.5, 0.5) -- (0,0);
\draw[thick] (-0.5, -0.5) -- (0,0);
\end{scope}

\begin{knot}[
clip radius=7pt,
clip width=5,
]

\strand [black, thick] (-1,0.3)
to [out=right, in=left] (0,0.3)
to [out=up, in=left, looseness=0.8] (1,1.4)
to [out=right, in=up, looseness=0.8] (2.3,1.1)
to [out=down, in=right, looseness=0.8] (1,0.8)
to [out=left, in=left, looseness=0.8] (1,-0.8)
to [out=right, in=up, looseness=0.8] (2.3,-1.1)
to [out=down, in=right, looseness=0.8] (1,-1.4)
to [out=left, in=down] (0,-0.3)
to [out=left, in=right] (-1,-0.3);

\strand [black, thick] (8,0.3)
to [out=left, in=right] (5,0.3)
to [out=up, in=right, looseness=0.8] (4,1.2)
to [out=left, in=up, looseness=0.6] (1.7,0.9)
to [out=down, in=left, looseness=0.6] (4,0.6)
to [out=right, in=right, looseness=0.6] (4,-0.6)
to [out=left, in=up, looseness=0.6] (1.7,-0.9)
to [out=down, in=left, looseness=0.8] (4,-1.2)
to [out=right, in=down] (5,-0.3)
to [out=right, in=left] (8,-0.3);

\flipcrossings{2,3}
\end{knot}

\draw [thick, draw=black, fill=white]
       (2.7,-1.5) -- (2.7,-0.4) -- (3.8,-0.4) -- (3.8,-1.5) -- cycle;
\node at (3.25,-0.95) {$J$};

\end{scope}

\begin{scope}[shift={(13.5,0)}]
\draw [thick, draw=black, fill=white]
       (-0.5,-0.5) -- (0.5,-0.5) -- (0.5,0.5) -- (-0.5,0.5) -- cycle;
\node at (0,0) {$1$};
\end{scope}

\end{tikzpicture}
\caption{The knot $K_J$ in $\R P^3$, and its covering link $L_J$ in $S^3$.}
\label{fig:rp3knot}
\end{figure}

Label the components of $L_J$ as $L_1$ and $L_2$. As $L_1$ is unknotted, the double branched cover of $S^3$ over $L_1$ is again $S^3$. Write $\wt{L}_J\subset S^3$ for the preimage of $L_2$ under this double branched cover. In Figure~\ref{fig:manipulate} we move the covering link $L_J$ in to a new form by an isotopy. From this new form it is straightforward to see that $\wt{L}_J$ is the second link drawn in Figure~\ref{fig:manipulate}.

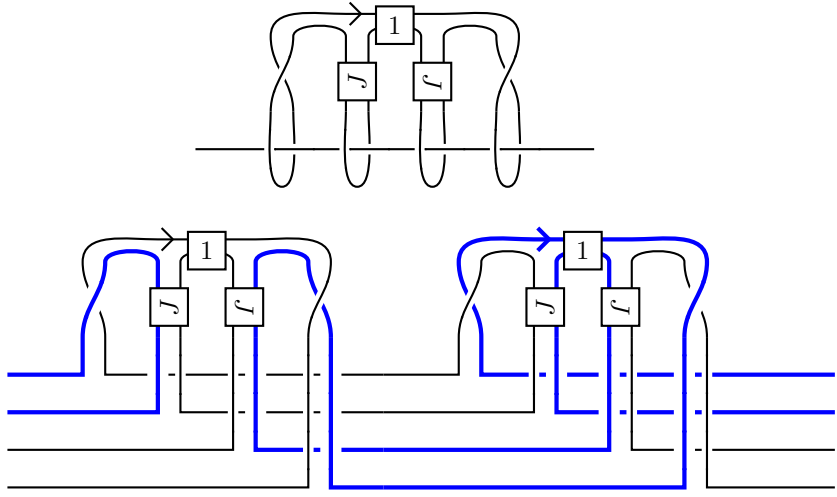
\begin{figure}[h]

\begin{tikzpicture}[scale=1]

\begin{scope}[shift={(0,0)}]

\begin{scope}[xscale=0.3, yscale=0.3, shift={(4,7.67)}]
\draw[thick] (-0.5, 0.5) -- (0,0);
\draw[thick] (-0.5, -0.5) -- (0,0);
\end{scope}

\begin{knot}[
clip radius=7pt,
clip width=5,
]

\strand [black, thick] (0.3,1)
to [out=up, in=down] (0,2)
to [out=up, in=left] (1,2.3)
to [out=right, in=left] (2.3,2.3)
to [out=right, in=up] (3.3,2)
to [out=down, in=up] (3,1);

\strand [black, thick] (0,1)
to [out=up, in=down] (0.3,2)
to [out=up, in=up, looseness=0.7] (1,2)
to [out=down, in=up] (1,1);

\strand [black, thick] (1.3,1)
to [out=up, in=down] (1.3,2)
to [out=up, in=up, looseness=0.7] (2,2)
to [out=down, in=up] (2,1);

\strand [black, thick] (2.3,1)
to [out=up, in=down] (2.3,2)
to [out=up, in=up, looseness=0.7] (3,2)
to [out=down, in=up] (3.3,1);

\strand [black, thick] (0,1)
to [out=down, in=left, looseness=0.6] (0.15,0)
to [out=right, in=down, looseness=0.6] (0.3,1);

\strand [black, thick] (1,1)
to [out=down, in=left, looseness=0.6] (1.15,0)
to [out=right, in=down, looseness=0.6] (1.3,1);

\strand [black, thick] (2,1)
to [out=down, in=left, looseness=0.6] (2.15,0)
to [out=right, in=down, looseness=0.6] (2.3,1);

\strand [black, thick] (3,1)
to [out=down, in=left, looseness=0.6] (3.15,0)
to [out=right, in=down, looseness=0.6] (3.3,1);

\strand[black, thick] (-1,0.5)
to [out=right, in=left] (4.3,0.5);

\flipcrossings{1, 4, 6, 8, 10}
\end{knot}

\draw [thick, draw=black, fill=white]
       (1.4,1.9) -- (1.9,1.9) -- (1.9,2.4) -- (1.4,2.4) -- cycle;
\node at (1.65, 2.15) {$1$};

\draw [thick, draw=black, fill=white]
       (0.9,1.15) -- (1.4,1.15) -- (1.4,1.65) -- (0.9,1.65) -- cycle;
\node[rotate=90] at (1.15,1.4) {$J$};

\draw [thick, draw=black, fill=white]
       (1.9,1.15) -- (2.4,1.15) -- (2.4,1.65) -- (1.9,1.65) -- cycle;
\node[rotate=-90] at (2.15,1.4) {$J$};

\end{scope}

\begin{scope}[shift={(-2.5,-3)}]

\begin{scope}[xscale=0.3, yscale=0.3, shift={(4,7.67)}]
\draw[thick] (-0.5, 0.5) -- (0,0);
\draw[thick] (-0.5, -0.5) -- (0,0);
\end{scope}

\begin{knot}[
clip radius=7pt,
clip width=5,
]

\strand [black, thick] (4,0.5)
to [out=left, in=right] (0.3,0.5)
to [out=up, in=down] (0.3,1)
to [out=up, in=down] (0,2)
to [out=up, in=left] (1,2.3)
to [out=right, in=left] (2.3,2.3)
to [out=right, in=up] (3.3,2)
to [out=down, in=up] (3,1);

\strand [black, thick] (3,1)
to [out=down, in=up] (3,-1)
to [out=left, in=right] (-1,-1);

\strand [blue, ultra thick] (-1,0.5)
to [out=right, in=left] (0,0.5)
to [out=up, in=down] (0,1)
to [out=up, in=down] (0.3,2)
to [out=up, in=up, looseness=0.7] (1,2)
to [out=down, in=up] (1,1);

\strand[blue, ultra thick] (1,1)
to [out=down, in=up] (1,0)
to [out=left, in=right] (-1,0);

\strand [black, thick] (1.3,1)
to [out=up, in=down] (1.3,2)
to [out=up, in=up, looseness=0.7] (2,2)
to [out=down, in=up] (2,1);

\strand[black, thick] (2,1)
to [out=down, in=up] (2,-0.5)
to [out=left, in=right] (-1,-0.5);

\strand[blue, ultra thick] (2.3,1)
to [out=down, in =up] (2.3,-0.5)
to [out=right, in=left] (4, -0.5);

\strand [blue, ultra thick] (2.3,1)
to [out=up, in=down] (2.3,2)
to [out=up, in=up, looseness=0.7] (3,2)
to [out=down, in=up] (3.3,1);

\strand[blue, ultra thick] (3.3,1)
to [out=down, in =up] (3.3,-1)
to [out=right, in=left] (4, -1);

\strand [black, thick] (4,0)
to [out=left, in=right] (1.3,0)
to [out=up, in=down] (1.3, 1);

\flipcrossings{1,2,3,4,5,7,8,12}
\end{knot}

\draw [thick, draw=black, fill=white]
       (1.4,1.9) -- (1.9,1.9) -- (1.9,2.4) -- (1.4,2.4) -- cycle;
\node at (1.65, 2.15) {$1$};

\draw [thick, draw=black, fill=white]
       (0.9,1.15) -- (1.4,1.15) -- (1.4,1.65) -- (0.9,1.65) -- cycle;
\node[rotate=90] at (1.15,1.4) {$J$};

\draw [thick, draw=black, fill=white]
       (1.9,1.15) -- (2.4,1.15) -- (2.4,1.65) -- (1.9,1.65) -- cycle;
\node[rotate=-90] at (2.15,1.4) {$J$};

\end{scope}

\begin{scope}[shift={(2.5,-3)}]

\begin{scope}[xscale=0.3, yscale=0.3, shift={(4,7.67)}]
\draw[blue, ultra thick] (-0.5, 0.5) -- (0,0);
\draw[blue, ultra thick] (-0.5, -0.5) -- (0,0);
\end{scope}

\begin{knot}[
clip radius=7pt,
clip width=5,
]

\strand [blue, ultra thick] (5,0.5)
to [out=left, in=right] (0.3,0.5)
to [out=up, in=down] (0.3,1)
to [out=up, in=down] (0,2)
to [out=up, in=left] (1,2.3)
to [out=right, in=left] (2.3,2.3)
to [out=right, in=up] (3.3,2)
to [out=down, in=up] (3,1);

\strand [blue, ultra thick] (3,1)
to [out=down, in=up] (3,-1)
to [out=left, in=right] (-1,-1);

\strand [black, thick] (-1,0.5)
to [out=right, in=left] (0,0.5)
to [out=up, in=down] (0,1)
to [out=up, in=down] (0.3,2)
to [out=up, in=up, looseness=0.7] (1,2)
to [out=down, in=up] (1,1);

\strand[black, thick] (1,1)
to [out=down, in=up] (1,0)
to [out=left, in=right] (-1,0);

\strand [blue, ultra thick] (1.3,1)
to [out=up, in=down] (1.3,2)
to [out=up, in=up, looseness=0.7] (2,2)
to [out=down, in=up] (2,1);

\strand[blue, ultra thick] (2,1)
to [out=down, in=up] (2,-0.5)
to [out=left, in=right] (-1,-0.5);

\strand[black, thick] (2.3,1)
to [out=down, in =up] (2.3,-0.5)
to [out=right, in=left] (5, -0.5);

\strand [black, thick] (2.3,1)
to [out=up, in=down] (2.3,2)
to [out=up, in=up, looseness=0.7] (3,2)
to [out=down, in=up] (3.3,1);

\strand[black, thick] (3.3,1)
to [out=down, in =up] (3.3,-1)
to [out=right, in=left] (5, -1);

\strand [blue, ultra thick] (5,0)
to [out=left, in=right] (1.3,0)
to [out=up, in=down] (1.3, 1);

\flipcrossings{1,2,3,4,5,7,8,12}
\end{knot}

\draw [thick, draw=black, fill=white]
       (1.4,1.9) -- (1.9,1.9) -- (1.9,2.4) -- (1.4,2.4) -- cycle;
\node at (1.65, 2.15) {$1$};

\draw [thick, draw=black, fill=white]
       (0.9,1.15) -- (1.4,1.15) -- (1.4,1.65) -- (0.9,1.65) -- cycle;
\node[rotate=90] at (1.15,1.4) {$J$};

\draw [thick, draw=black, fill=white]
       (1.9,1.15) -- (2.4,1.15) -- (2.4,1.65) -- (1.9,1.65) -- cycle;
\node[rotate=-90] at (2.15,1.4) {$J$};

\end{scope}

\end{tikzpicture}
\caption{The link $\wt{L}_J$ has 2 components, each a copy of $2J\#2(J^r)$.}
\label{fig:manipulate}
\end{figure}

Now select one of the 2 components of $\wt{L}_J$, for example we take the component highlighted in Figure~\ref{fig:manipulate}. By following this strand around, we see that this component is $2J\#2J^r$, where $-^r$ denotes the operation of taking the reversed orientation.

Write $D=\Wh^+(T_{2,3})$ and define a family of knots
\[\mathcal{F}':=\{K_J\subset Y\, |\, J\in\{nD\}_{n=1}^\infty\}.\]The knots in $\mathcal{F}'$ are mutually topologically concordant. In particular the knots in~$\mathcal{F}'$ are mutually topologically almost concordant.

\begin{proposition}\label{prop:exceptionalcase}
The knots in the infinite family $\mathcal{F}'$ are mutually distinct in smooth almost concordance.
\end{proposition}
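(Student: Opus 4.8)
The plan is to run the covering-link-and-$\tau$ strategy of Proposition~\ref{prop:heddenfamily}, but since the deck transformation of the universal cover $\pi\colon S^3\to\R P^3$ has order two, the Bing double step is unavailable and I will instead pass to a \emph{second}, branched, double cover. Recall from Example~\ref{exm:SmoothFamily} that $\tau(nD)=n$ for $D=\Wh^+(T_{2,3})$, so it suffices to prove that if $K_{J}$ and $K_{J'}$ are smoothly almost concordant then $\tau(J)=\tau(J')$; applied to $J=nD$ and $J'=mD$ this forces $n=m$, and hence the knots of $\mathcal F'$ are mutually distinct in smooth almost concordance.

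First I would note that both components $L_1,L_2$ of the covering link $L_J=\pi^{-1}(K_J)\subset S^3$ are unknotted, just as for the pattern used in the general construction, so that Proposition~\ref{prop:almostimpliesconc} applies. Assuming $K_J$ and $K_{J'}$ are smoothly almost concordant, that proposition yields a smooth concordance in $S^3\times I$ between the two-component links $L_J$ and $L_{J'}$; write it as $A_1\sqcup A_2$, with labelling arranged so that $A_i$ is a concordance annulus from $L_i$ to $L_i'$.

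Next I would form the double cover $\pi_W\colon W\to S^3\times I$ branched over $A_1$. Because $A_1$ is a concordance between the unknots $L_1$ and $L_1'$, a Mayer--Vietoris argument exactly as in the proof of Proposition~\ref{prop:heddenfamily} (following Cha--Livingston--Ruberman~\cite{Cha-Livingston-Ruberman:2006-1}) shows $H_*(W;\Q)=H_*(S^3;\Q)$ with $\partial W=S^3\sqcup S^3$, so $W$ is a rational homology $S^3\times I$. Since $L_2$ lifts to the two-component link $\wt L_J$ in the double branched cover $\Sigma_2(S^3,L_1)=S^3$, the linking number $\lk(L_1,L_2)$ is even; consequently the monodromy around the core of $A_2$ is trivial and $\pi_W^{-1}(A_2)$ is a disjoint union of two annuli. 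A single one of these is a rational concordance in $W$ from a component of $\wt L_J$ to a component of $\wt L_{J'}$, that is, from $2J\#2J^{r}$ to $2J'\#2(J')^{r}$.

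Finally, since $\tau$ is an invariant of rational smooth concordance, is additive under connected sum, and satisfies $\tau(J)=\tau(J^{r})$, this rational concordance gives $4\tau(J)=\tau(2J\#2J^{r})=\tau(2J'\#2(J')^{r})=4\tau(J')$, whence $\tau(J)=\tau(J')$ as required. I expect the main obstacle to be the penultimate step: confirming that branching over the concordance $A_1$ of unknots produces a genuine rational homology cobordism, and that the preimage of $A_2$ is an honest concordance rather than a higher-genus cobordism. This amounts to adapting the Cha--Livingston--Ruberman branched-cover computation to the present two-stage setting, where the first cover is the universal cover of $\R P^3$ and only the second is branched.
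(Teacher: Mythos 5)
Your proposal is correct and follows essentially the same route as the paper's proof: apply Proposition~\ref{prop:almostimpliesconc} to the covering links $L_J$, $L_{J'}$ (using that their components are unknotted), pass to the double branched cover over the first annulus of the resulting concordance to obtain a rational concordance from $2J\#2J^r$ to $2J'\#2(J')^r$, and conclude via the rational concordance invariance, additivity, and reversal-invariance of $\tau$. The only difference is that you spell out details the paper leaves implicit (the Mayer--Vietoris computation and the trivial-monodromy argument showing $\pi_W^{-1}(A_2)$ is two annuli), and these details are correct.
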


\begin{proof}
Suppose $K_J,K_{J'}\in\mathcal{F'}$ are smoothly almost concordant. Write $\pi\colon S^3\to \R P^3$ for the universal cover. Then as $L_J$ and $L_J'$ have unknotted components we apply Proposition~\ref{prop:almostimpliesconc} to conclude that $L_J$ and $L_J'$ are smoothly concordant. Lifting to the double branched covers over the respective first components, there is a smooth concordance from $\wt{L}_J$ to $\wt{L}_J'$. Selecting one component of this concordance we obtain a rational concordance from $2J\#2J^r$ to $2J'\#2(J')^r$. As the $\tau$ invariant is a rational concordance invariant, a homomorphism, and does not change under orientation reversal, it is thus sufficient to recall that the knots $\mathcal{F'}$ were constructed using a family of knots $nD$ with distinct~$\tau$ invariants.\end{proof}

\section{$\Diff$ versus\ $\Top$ for any class in a lens space}

We will now show that, at least for lens spaces, the distinction between smooth and topological almost concordance is not restricted to the nullhomotopic class. Indeed, in every free homotopy class in a lens space there is a distinction.

\begin{theoremB}
  Let $Y = L(p,q)$, for $\gcd(p,q)=1$, $p>1$, and let $x \in [S^1, Y]$ be any free homotopy class.   Then there exist topologically concordant knots $K$ and $K'$ in~$Y$ representing the class $x$, that are distinct in smooth almost concordance.
\end{theoremB}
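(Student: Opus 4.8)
The plan is to reduce immediately to the case where $x$ is homotopically essential, since the null homotopic case is exactly Theorem~\ref{thm:A}. As $\pi_1(L(p,q))\cong\Z/p$ is abelian, the free homotopy class $x$ is just an element of $\Z/p$; let $n$ denote its order, so $n\mid p$ and $n>1$ when $x$ is essential, and note that any knot realizing $x$ by winding $\ell$ times around the core of the Heegaard solid torus has covering link with $\gcd(\ell,p)=p/n$ components. Rather than an infinite family, it suffices (matching the statement of Theorem~\ref{thm:B}) to produce a single \emph{pair} $K=K_U$, $K'=K_D$ representing $x$, where $D=\Wh^+(T_{2,3})$, built by a common satellite pattern in the solid torus that ties in a local knot $J\in\{U,D\}$. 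Because $D$ has Alexander polynomial $1$ it is topologically slice by~\cite{MR666159}, so tying it in can be undone by a locally supported topological concordance; this makes $K_U$ and $K_D$ topologically concordant, hence topologically almost concordant, and all the work lies in obstructing a \emph{smooth} almost concordance between them.

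The engine for the obstruction is uniform across the essential cases. I would design the pattern $P_J$ in $U=S^1\times D^2\subset L(p,q)$ so that $K_J$ winds $\ell$ times (realizing $x$) and so that, after passing to the covering link $\wt L_J\subset S^3$ of Lemma~\ref{lem:coveringlink-lens-spaces}, the copies of $J$ sit only on the \emph{bands} linking distinct components, exactly as in the daisy-chain picture of Figure~\ref{fig:daisychain}. This is the crucial design constraint: it guarantees that each component of $\wt L_J$ is individually \emph{unknotted}, which is precisely the hypothesis needed to apply Proposition~\ref{prop:almostimpliesconc}. A smooth almost concordance upstairs then forces a smooth concordance of the covering links $\wt L_U$ and $\wt L_D$ in $S^3$. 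From this link concordance I would isolate a single $\tau$-detectable knot by passing to a suitable double branched cover, following the Cha--Livingston--Ruberman observation~\cite{Cha-Livingston-Ruberman:2006-1} already used in Proposition~\ref{prop:heddenfamily}: branching over all but one component produces a rational smooth concordance between knots of the form $aD\# aD^r$ and the unknot, and since $\tau$ is a rational concordance homomorphism invariant under orientation reversal with $\tau(D)=1$, the value $\tau(aD\# aD^r)=2a\neq 0$ obstructs the concordance.

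The two essential cases with $Y\neq\R P^3$ diverge in exactly how the single knot is extracted, because the deck transformation $t$ corresponding to $[K_J]=x$ now \emph{fixes} each lift (the induced cover $\wt K\to K_J$ has degree $n$ and $t$ generates its deck group), so the covering-double mechanism of Proposition~\ref{prop:BD} does not apply verbatim. When $n>2$ and the preimage has at least two components, I would extract a covering double using a deck transformation \emph{transverse} to the $t$-action, which the Boden--Nagel embedding of Lemma~\ref{lem:boden-nagel} sends to a Bing double $\BD(J)$, and then finish exactly as in Proposition~\ref{prop:heddenfamily}. When $n=2$ (forcing $p>2$, so $Y\neq\R P^3$) each lift wraps twice and the clean Bing-double picture is unavailable; here I would analyze the doubly-wrapped component directly, taking the double branched cover over an unknotted partner component to expose a knot of the form $2J\#2J^r$, precisely as in the manipulation of Figure~\ref{fig:manipulate} but now inside a genuine lens space, and detect it with $\tau$. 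The remaining single-component situation, where $x$ is a generator of order $p>2$, I would treat by computing $\tau$ of the covering \emph{knot} itself via the Van Cott cabling formula of Proposition~\ref{prop:vancottcable}, using that the ambiguity from local knotting contributes only multiples of $p$, so that $\tau$ taken modulo $p$ survives, in the spirit of Celoria~\cite{Celoria16}.

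The genuinely hardest and structurally distinct case is $Y=\R P^3=L(2,1)$ with $x$ the nontrivial class, and this is the step I expect to be the main obstacle. Here $p=n=2$: the preimage in $S^3$ is a single component, every covering double degenerates, and the cancellations in both the branched-cover computation and the mod-$p$ reduction render $\tau$ blind to the difference between $K_U$ and $K_D$. This is precisely why the other cases cannot simply be specialized. To resolve it I would replace $\tau$ by the Ozsv\'ath--Stipsicz--Szab\'o $\Upsilon$ invariant~\cite{Ozsvath14}, which enjoys the same concordance-homomorphism and genus-bounding formalism but retains strictly more information; the plan is to run the same covering reductions and then show that $\Upsilon_{K_D}\neq\Upsilon_{K_U}$ at some parameter $s\in(0,2)$, yielding the required pair. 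Beyond the $\R P^3$ computation, I anticipate that verifying the band-wise placement of $J$ — and hence the unknottedness of the covering components — for an explicit essential pattern, together with pinning down the exact knot type produced by each branched cover, will be the most delicate bookkeeping in the argument.
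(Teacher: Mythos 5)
Your four-case decomposition (null homotopic via Theorem~\ref{thm:A}; order greater than $2$; order $2$ with $Y\neq\R P^3$; and $\R P^3$) and your choice of tools match the paper, but the essential cases contain genuine gaps, the worst being the generic case of order $>2$. First, the ``uniform engine'' cannot be built there. Writing $[x]=\ell\in\Z/p$ and $d=\gcd(\ell,p)$, each lift of a representative winds $\ell/d$ times around the covering solid torus and passes through every one of the $p$ lifts of the $J$--box, so when $d=1$ there are no inter-component bands at all, and when $\ell/d>1$ the components are $(\ell/d,\cdot)$--cabled and cannot be made unknotted by band-wise placement of the boxes; hence Proposition~\ref{prop:almostimpliesconc} is unavailable outside the sub-case $\ell\mid p$. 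Your Bing-double extraction also fails: for a pattern winding $\ell$ times with framing $n'$, two lifts have linking number $\ell(pn'+\ell q)/d^2$ plus clasp terms that contribute zero, and $pn'+\ell q\neq 0$ for every integer $n'$ (since $\gcd(p,\ell q)=d<p$), whereas the components of $\BD(J)$ have linking number zero; the daisy chain of Proposition~\ref{prop:BD} and Figure~\ref{fig:daisychain} genuinely requires a null homotopic pattern, and no choice of deck transformation repairs this. Second, your fallback ``$\tau$ mod $p$'' computation collapses with your chosen local knot $D=\Wh^+(T_{2,3})$: since $g_3(D)=\tau(D)$, Proposition~\ref{prop:vancottcable}\,(\ref{item:vancottcable1}) gives $\tau\bigl((pD)_{\ell,p+\ell q}\bigr)-\tau\bigl(T_{\ell,p+\ell q}\bigr)=\ell p\equiv 0\pmod p$, so the constraint $\tau(\wt K_D)+p\,\tau(\widehat J)=\tau(\wt K_U)$ is solvable (take $\tau(\widehat J)=-\ell$) and nothing is obstructed. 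The paper's Proposition~\ref{prop:lens_space_generic} uses $-D=\Wh^-(-T_{2,3})$ precisely so that $g_3=-\tau$ and case~(\ref{item:vancottcable2}) of Van Cott applies, leaving the residue $\ell/d-1\pmod{p/d}$; and since even this is vacuous when $\ell\mid p$ (e.g.\ for generators, $\ell=1$), it must be paired with the orientation-reversal trick of also considering representatives winding $p-\ell$ times with reversed orientation, so that the two conclusions $\ell\mid p$ and $(p-\ell)\mid p$ together force the excluded equality $\ell=p/2$. Both the sign choice and this pairing are the heart of the generic case and are absent from your outline.

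The two order-$2$ cases also have gaps. For $L(2n,q)$, $n>1$, your plan to take a double branched cover over an unknotted partner component and ``expose $2J\#2J^r$'' is unjustified: adjacent components of the covering link are not a zero-linking pair as in Figure~\ref{fig:manipulate}; their linking number is dictated by the $q$--twisting, so the number of components and the knot type of the lifted curve in the branched cover are not what you assert. The paper's Proposition~\ref{prop:L2nqnotRP3} avoids branched covers entirely: it reverses orientation on one component, performs a band move producing a genus-one surface in $D^4$ bounding $\Wh^+(J)\#\Wh^+(J)$ (or $2\Wh^+(J)\#2(\Wh^+(J))^r$ when $n=2$) and contradicts the slice-genus bound $|\tau|$. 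For $\R P^3$ you correctly reach for $\Upsilon$, but your target statement, that $\Upsilon$ of the two knots differ at some parameter, ignores the local-knot ambiguity that defines almost concordance: after the covering reduction one must exclude a smooth concordance $(D\#D)_{3,11}\sim T_{3,11}\#\widehat J\#\widehat J$ for \emph{every} knot $\widehat J$, so one needs a feature that no summand of the form $\widehat J\#\widehat J$ can cancel. The paper's Proposition~\ref{prop:genwhiteheaddoubles} supplies exactly this parity mechanism: the slope of $\Upsilon_{(D\#D)_{3,11}\#-T_{3,11}}$ equals $5$, an odd number, on the interval $(\tmfrac{2}{5},\tmfrac{2}{3})$, whereas the slope of $\Upsilon_{\widehat J\#\widehat J}$ is even wherever defined. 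Without this (or a substitute) the $\R P^3$ case, which you rightly identify as the crux, remains open in your proposal.
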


Note that the \emph{order} of a free homotopy class $x\in [S^1,Y]$ is always well-defined despite the fact that $[S^1,Y]$ is not a group. We divide Theorem~\ref{thm:B} into four cases:\begin{enumerate}
\item \label{item:case1} The case when $x$ is order 0 is covered by Theorem~\ref{thm:A}.
\item \label{item:case2}  The case when $x$ is neither order 0 nor 2, proved in Proposition~\ref{prop:lens_space_generic}.
\item \label{item:case3}  The case when $x$ is order 2 and $Y\neq \R P^3$, proved in Proposition~\ref{prop:L2nqnotRP3}.
\item \label{item:case4}  The case when $x$ is order 2 and $Y= \R P^3$, proved in Proposition~\ref{prop:RP3}.
\end{enumerate}

A particularly neat covering link argument is possible in case (\ref{item:case2}), the generic case. The argument is possible because of the way arithmetic modulo $p$ interacts with knot orientation reversal in this case. In case (\ref{item:case3}) we use the fact that $\tau$ is a smooth slice genus lower bound together with a covering link argument. The argument for case (\ref{item:case4}) requires a completely different method. This case is exceptional because, unique among lens spaces $L(2n,q)$, the preimage of a knot in $L(2,1)$ representing the class of order~$2$ is a $1$--component link in $S^3$, i.e.\ a knot, so the covering link argument of case~(\ref{item:case3}) does not work, and a different approach is required.

\subsection{Proof of Theorem~\ref{thm:B} when $x$ is not order 0 and $Y\neq \R P^3$}

\begin{proposition}\label{prop:lens_space_generic}Let $x\in[S^1,L(p,q)]$ have order different from 0 and 2. Then there exist topologically concordant knots $K$ and $K'$ representing $x$ which are distinct in smooth almost concordance.
\end{proposition}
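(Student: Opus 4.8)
The plan is to run the same covering-link philosophy used for Theorem~\ref{thm:A}, but now starting from a homotopically essential pattern. Fix a representative $K\subset L(p,q)$ of the class $x$ lying in the solid torus $U=S^1\times D^2$ of Figure~\ref{fig:CoveredLink} and winding $\ell$ times around the core, where $\ell$ is chosen so that $\ell\bmod p$ has order $d:=\operatorname{ord}(x)$ in $\Z/p$; thus $\gcd(\ell,p)=p/d$. I would then define $K_J\subset L(p,q)$ by tying a knot $J\subset S^3$ into $K$ by a doubling/Bing-type clasp pattern, as in Figure~\ref{fig:pattern1}, arranged so that the added clasping is freely null homotopic in $U$. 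This guarantees that $K_J$ still represents $x$, and that whenever $J$ is topologically slice the knot $K_J$ is topologically concordant to $K_U$. Taking $J=nD$ with $D=\Wh^+(T_{2,3})$ as in Example~\ref{exm:SmoothFamily} then makes all the $K_{nD}$ mutually topologically concordant, so it remains only to separate two of them, say $K=K_U$ and $K'=K_D$, in smooth almost concordance.

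For the smooth obstruction I would pass to the universal cover $\pi\colon S^3\to L(p,q)$ and compute $L_J:=\pi^{-1}(K_J)$ using Lemma~\ref{lem:coveringlink-lens-spaces}: it has $\gcd(\ell,p)=p/d$ components, and the doubling pattern is designed precisely so that each component is \emph{unknotted} in $S^3$, the clasps encoding $J$ without knotting the underlying curve. Because the covering links of $K_J$ and of $K_{J'}$ then have unknotted components, Proposition~\ref{prop:almostimpliesconc} applies: if $K_J$ and $K_{J'}$ were smoothly almost concordant, the links $L_J$ and $L_{J'}$ would be smoothly concordant in $S^3$. The point of passing to this covering link, exactly as in the null homotopic case, is that it eliminates the ambiguity introduced by the local-knot action.

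It then remains to extract from $L_J$ a numerical smooth-concordance invariant that detects $J$. Following the $\R P^3$ computation of Figure~\ref{fig:manipulate}, I would take an iterated double branched cover of $S^3$ along all but one of the unknotted components of $L_J$; a Mayer--Vietoris computation (as in the proof of Proposition~\ref{prop:almostimpliesconc}) shows that each stage is a rational homology sphere, so that a smooth concordance of the links lifts to a rational smooth concordance of the preimage knot of the remaining component. By construction this preimage knot is built from copies of $J$ and its reverse $J^r$, with multiplicities recording how a single covering component threads $d$ times through the stacked tangle of Lemma~\ref{lem:coveringlink-lens-spaces}. Since $\tau$ is a rational smooth concordance homomorphism with $\tau(J^r)=\tau(J)$, one obtains $\tau=c\,\tau(J)$ for an explicit constant $c=c(p,q,d)$; for $J=nD$ this equals $cn$, and provided $c\neq 0$ this distinguishes distinct $n$, completing the proof.

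The step I expect to be the main obstacle, and the precise place where the hypothesis $d\neq 2$ enters, is showing that the preimage knot is genuinely a \emph{connected sum} of (coherently oriented) copies of $J$, so that $c\neq 0$. As one follows a covering component through the $d$ stacked tangles, the direction in which it traverses each copy of the $J$-box is governed by the residues $k\ell \bmod p$ for $k=0,\dots,d-1$, and this is exactly where arithmetic modulo $p$ meets orientation reversal. When $\operatorname{ord}(x)\neq 0,2$ these traversals are coherently oriented, the pattern resolves into a connected sum, and $\tau$ is an exact nonzero multiple of $\tau(J)$. For an order~$2$ class, by contrast, the relation $x=x^{-1}$ forces the two traversals to be antiparallel, so one sees a \emph{cable} of $J$ rather than a connected sum; there $\tau$ is not additive and only the slice-genus bound is available, via Proposition~\ref{prop:vancottcable}, which is exactly why that case is treated separately in Proposition~\ref{prop:L2nqnotRP3}.
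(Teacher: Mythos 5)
Your proposal has a fatal gap at its central step: the claim that the pattern can be ``arranged so that each component of the covering link is unknotted in $S^3$.'' For a homotopically essential class this is in general impossible, no matter how the clasping is designed. Take $x$ primitive, i.e.\ $\gcd(\ell,p)=1$, so that the preimage of \emph{any} representative $K$ of $x$ is a single knot $\wt{K}\subset S^3$, invariant under the free $\Z/p$ deck action. If $\wt{K}$ were unknotted, its exterior would be a solid torus invariant under this action (choose the tubular neighbourhood equivariantly), and the quotient of a solid torus by a free finite cyclic action is again a solid torus; hence $K$ itself would have solid-torus exterior in $L(p,q)$, i.e.\ $K$ would be a core of a genus one Heegaard splitting. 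But Heegaard cores represent at most four classes in $\pi_1(L(p,q))$ (the two standard cores, up to orientation), so for instance the class $2\in\Z/5\cong\pi_1(L(5,1))$, which has order $5$ and therefore falls under Proposition~\ref{prop:lens_space_generic}, admits \emph{no} representative with unknotted preimage. (A similar obstruction, run in the intermediate cover, rules out the non-primitive classes with $\ell\nmid p$.) Consequently Proposition~\ref{prop:almostimpliesconc} cannot be invoked, and everything downstream of it in your argument --- the iterated branched covers, the claim that a smooth almost concordance yields an honest concordance of covering links, and the formula $\tau=c\,\tau(J)$ --- collapses. This is exactly why the paper does \emph{not} use Proposition~\ref{prop:almostimpliesconc} in this case: once the components are knotted, the best one can extract from an almost concordance is a smooth concordance in $S^3\times[0,1]$ between single covering components \emph{with an unremovable local-knot term} $\#\left(\frac{p}{d}\right)\widehat{J}$, and an argument must be built to survive that ambiguity.

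The paper's actual proof is structured around precisely this constraint, and it also shows that you have misplaced where the hypothesis on the order of $x$ enters. The paper uses coherently oriented cable patterns ($J$--boxes), so the covering components are the cable $(p(-D))_{\ell/d,(p+\ell q)/d}$ and the torus knot $T_{\ell/d,(p+\ell q)/d}$, with $d=\gcd(\ell,p)$; applying $\tau$ via Van Cott's formula (Proposition~\ref{prop:vancottcable}) and the torus knot formula turns the hypothetical concordance into a congruence $1\equiv \ell/d \pmod{p/d}$, i.e.\ ``$\ell$ divides $p$'' --- the unknown $\tau(\widehat{J})$ is killed because it only appears multiplied by $p/d$. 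Repeating this with the orientation-reversed curve of winding $p-\ell$ (which also represents $x$) gives ``$p-\ell$ divides $p$,'' and only the conjunction of the two divisibilities contradicts $2\ell\neq p$. So the order-$\neq 2$ hypothesis enters through this final arithmetic, forcing a non-constructive either/or conclusion (one cannot say which of the two pairs is distinguished), not through coherence of orientations in the pattern. Indeed you have the two techniques exactly inverted: it is the \emph{order two} case (Proposition~\ref{prop:L2nqnotRP3}) that uses clasp-type patterns with unknotted covering components together with a slice-genus bound from $\tau$, while the generic case uses cables and Van Cott's formula; your concluding paragraph attributes each method to the wrong case.
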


\begin{proof}
Fix an identification $\pi_1(L(p,q))\cong \Z/p\Z$ by choosing an oriented meridian~$\eta$ to the $-p/q$ surgery curve in the surgery diagram defining $L(p,q)$. Under this identification~$[x]=\ell$ for some $0<\ell<p$, $2\ell\neq p$. For an integer $a$ and a knot~$J\subset S^3$, consider the curve~$K(J,a)\subset L(p,q)$ depicted in Figure~\ref{fig:Kplus}. By Lemma~\ref{lem:coveringlink-lens-spaces}, the preimage of~$K(J,a)$ in the universal cover is given by the second picture in Figure~\ref{fig:Kplus}. Choose a single component~$\wt{K}(J,a)$ of the preimage.

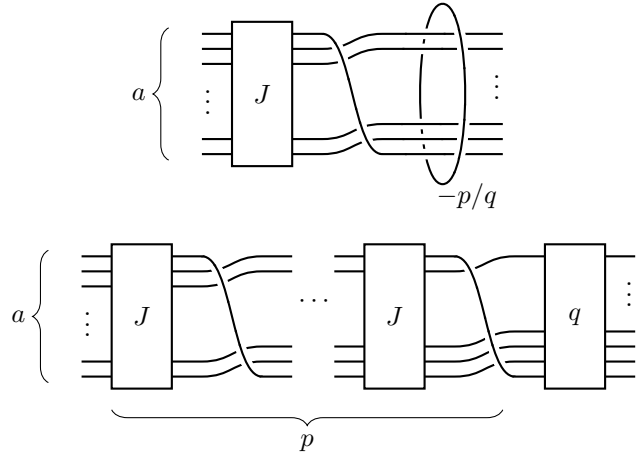
\begin{figure}[h]

\begin{tikzpicture}[scale=.8]

\begin{scope}[shift={(2,3.7)}]\begin{knot}[
clip radius=7pt,
clip width=5,
]

\strand[black,thick] (2,2) ..  controls +(0,0) and +(-0,0) .. (4,2) .. controls +(0.5,0) and +(-0.5,0) .. (5,0) .. controls +(0.5,0) and +(-0.5,0) .. (7,0);

\strand[black,thick] (2,1.75) ..  controls +(0,0) and +(-0,0) .. (4,1.75) .. controls +(0.5,0) and +(-0.5,0) .. (5,2) .. controls +(0.5,0) and +(-0.5,0) .. (7,2);

\strand[black,thick] (2,1.5) ..  controls +(0,0) and +(-0,0) .. (4,1.5) .. controls +(0.5,0) and +(-0.5,0) .. (5,1.75) .. controls +(0.5,0) and +(-0.5,0) .. (7,1.75);

\strand[black,thick] (2,0.25) ..  controls +(0,0) and +(-0,0) .. (4,0.25) .. controls +(0.5,0) and +(-0.5,0) .. (5,0.5) .. controls +(0.5,0) and +(-0.5,0) .. (7,0.5);

\strand[black,thick] (2,0) ..  controls +(0,0) and +(-0,0) ..  (4,0) .. controls +(0.5,0) and +(-0.5,0) .. (5,0.25) .. controls +(0.5,0) and +(-0.5,0) .. (7,0.25);

\strand[black,thick] (6,2.5) .. controls +(0.5,0) and +(0.5,0) .. (6,-0.5) .. controls +(-0.5,0) and +(-0.5,0) .. (6,2.5);

\flipcrossings{13,11,9,7,5}

\end{knot}

\node at (6.4,-0.7) {$-p/q$};
\node at (2.1,1) {$\vdots$};
\node at (6.9,1.25) {$\vdots$};

\draw [thick, draw=black, fill=white]
       (2.5, -0.2) -- (3.5,-0.2) -- (3.5,2.2) -- (2.5,2.2) -- cycle;
\node[rotate=0] at (3,1) {$J$};

\draw [decorate,decoration={brace,amplitude=6pt,raise=8pt}] (1.8,-0.1) -- (1.8,2.1);
\node at (0.95,1) {$a$};

\end{scope}

\begin{scope}[shift={(0,0)}]\begin{knot}[
clip radius=7pt,
clip width=5,
]

\strand[black,thick] (2,2) ..  controls +(0,0) and +(-0,0) .. (4,2) .. controls +(0.5,0) and +(-0.5,0) .. (5,0) .. controls +(0.5,0) and +(-0.5,0) .. (5.5,0);

\strand[black,thick] (2,1.75) ..  controls +(0,0) and +(-0,0) .. (4,1.75) .. controls +(0.5,0) and +(-0.5,0) .. (5,2) .. controls +(0.5,0) and +(-0.5,0) .. (5.5,2);

\strand[black,thick] (2,1.5) ..  controls +(0,0) and +(-0,0) .. (4,1.5) .. controls +(0.5,0) and +(-0.5,0) .. (5,1.75) .. controls +(0.5,0) and +(-0.5,0) .. (5.5,1.75);

\strand[black,thick] (2,0.25) ..  controls +(0,0) and +(-0,0) .. (4,0.25) .. controls +(0.5,0) and +(-0.5,0) .. (5,0.5) .. controls +(0.5,0) and +(-0.5,0) .. (5.5,0.5);

\strand[black,thick] (2,0) ..  controls +(0,0) and +(-0,0) ..  (4,0) .. controls +(0.5,0) and +(-0.5,0) .. (5,0.25) .. controls +(0.5,0) and +(-0.5,0) .. (5.5,0.25);

\flipcrossings{13,11,9,7,5}

\end{knot}

\node at (2.1,1) {$\vdots$};
\node at (5.9,1.25) {$\cdots$};

\draw [thick, draw=black, fill=white]
       (2.5, -0.2) -- (3.5,-0.2) -- (3.5,2.2) -- (2.5,2.2) -- cycle;
\node[rotate=0] at (3,1) {$J$};

\draw [decorate,decoration={brace,amplitude=6pt,raise=8pt}] (1.8,-0.1) -- (1.8,2.1);
\node at (0.95,1) {$a$};

\end{scope}

\begin{scope}[shift={(4.2,0)}]\begin{knot}[
clip radius=7pt,
clip width=5,
]

\strand[black,thick] (2,2) ..  controls +(0,0) and +(-0,0) .. (4,2) .. controls +(0.5,0) and +(-0.5,0) .. (5,0) .. controls +(0.5,0) and +(-0.5,0) .. (7,0);

\strand[black,thick] (2,1.75) ..  controls +(0,0) and +(-0,0) .. (4,1.75) .. controls +(0.5,0) and +(-0.5,0) .. (5,2) .. controls +(0.5,0) and +(-0.5,0) .. (7,2);

\strand[black,thick] (2,0.5) ..  controls +(0,0) and +(-0,0) .. (4,0.5) .. controls +(0.5,0) and +(-0.5,0) .. (5,0.75) .. controls +(0.5,0) and +(-0.5,0) .. (7,0.75);

\strand[black,thick] (2,0.25) ..  controls +(0,0) and +(-0,0) .. (4,0.25) .. controls +(0.5,0) and +(-0.5,0) .. (5,0.5) .. controls +(0.5,0) and +(-0.5,0) .. (7,0.5);

\strand[black,thick] (2,0) ..  controls +(0,0) and +(-0,0) ..  (4,0) .. controls +(0.5,0) and +(-0.5,0) .. (5,0.25) .. controls +(0.5,0) and +(-0.5,0) .. (7,0.25);

\flipcrossings{13,11,9,7,5}

\end{knot}

\node at (6.9,1.5) {$\vdots$};

\draw [thick, draw=black, fill=white]
       (2.5, -0.2) -- (3.5,-0.2) -- (3.5,2.2) -- (2.5,2.2) -- cycle;
\node[rotate=0] at (3,1) {$J$};

\draw [thick, draw=black, fill=white]
       (5.5, -0.2) -- (6.5,-0.2) -- (6.5,2.2) -- (5.5,2.2) -- cycle;
\node[rotate=0] at (6,1) {$q$};

\end{scope}

\draw [decorate,decoration={brace,amplitude=6pt,raise=8pt}]  (9,-0.2) -- (2.5,-0.2);
\node at (5.75,-1.1) {$p$};

\end{tikzpicture}
\caption{The curve $K(J,a)$ in $L(p,q)$ and its preimage in $S^3$.}
\label{fig:Kplus}
\end{figure}
Orienting $K(J,a)$ to agree with the meridian $\eta$, and setting~$a=\ell$, we obtain a knot $K_+(J,\ell)$ representing $x$, for any $J$. Orienting the curve to disagree with the meridian $\eta$ and setting~$a=p-\ell$, we obtain a knot $K_-(J,p-\ell)$ representing~$x$, for any $J$. Write $-D=\Wh^-(-T_{2,3})$, $U\subset S^3$ for the unknot and define $d:=\gcd(\ell,p)=\gcd(p-\ell,p)$.

Define four knots in $L(p,q)$:\[\text{$K_+=K_+(U,\ell)$, $K'_+=K_+(-D,\ell)$, $K_-=K_-(U,p-\ell)$, $K_-'=K_-(-D,p-\ell)$.}\]
As $-D$ is topologically null concordant, there are topological concordances~$K_+\sim K_+'$ and~$K_-\sim K_-'$.

Suppose that $K_+$ is smoothly almost concordant to $K_+'$. Comparing with Figure~\ref{fig:Kplus} we see  that the covering links of $K_+$ and $K_+'$ each have $\gcd(\ell,p+\ell q)$ components, which is calculated to be $\gcd(\ell,p+\ell q)=\gcd(\ell, p)=d$ components. Restricting to a single component of each of the covering links, and recalling the discussion in Section~\ref{subsec:coveringlinks}, there exists some $\widehat{J}\subset S^3$ such that there is a smooth concordance between the knots
\[(p(-D))_{\frac{\ell}{d},\frac{p+\ell q}{d}}\#\left(\tmfrac{p}{d}\right)\widehat{J}\quad\text{and}\quad T_{\frac{\ell}{d},\frac{p+\ell q}{d}},\]
where $d:=\gcd(\ell,p)$. We now wish to use Van Cott's cabling formulae~\ref{prop:vancottcable}. We have already seen in Example~\ref{exm:SmoothFamily} that $\tau(\Wh^+(T_{2,3}))=1$. Recall as well that $\Wh^{-}(-J)=\Wh^+(J)$ for any knot $J\subset S^3$, so that $\Wh^{-}(-T_{2,3})=-\Wh^+(T_{2,3})$. As $\tau$ is a homomorphism from the knot concordance group, we conclude~$\tau(-D)=-1$. There is a genus~1 Seifert surface for~$\Wh^{-}(-T_{2,3})$, so~$\tau(-D)=-g(-D)$. We may thus apply Van Cott's cabling formulae, together with the Ozsv{\'a}th-Szab{\'o} torus knot formula, to calculate:
\[\begin{array}{*2{>{\displaystyle}l}}
&\tau\left((p(-D))_{\frac{\ell}{d},\frac{p+\ell q}{d}}\#\left(\tmfrac{p}{d}\right)\widehat{J}\right)= \tau\left(T_{\frac{\ell}{d},\frac{p+\ell q}{d}}\right)\\
\implies&\frac{\ell}{d}\cdot\tau(p(-D))+\frac{\left(\tmfrac{\ell}{d}-1\right)\left(\tmfrac{p+\ell q}{d}+1\right)}{2}+\tau\left(\frac{p}{d}\cdot \widehat{J}\right)=\frac{\left(\tmfrac{\ell}{d}-1\right)\left(\tmfrac{p+\ell q}{d}-1\right)}{2}\\
&\\
\implies&\frac{\ell p}{d}\cdot\tau(-D)+\frac{p}{d}\cdot\tau(\widehat{J})=1-\frac{\ell}{d}\\
&\\
\implies& 1\equiv \frac{\ell}{d}\quad\text{mod $\frac{p}{d}$}.\end{array}\]
Since $0< \frac{\ell}{d}< \frac{p}{d}$, this final statement is equivalent to the statement that $\ell=d$. Recalling that $d=\gcd(\ell, p)$, this is equivalent to the statement that $\ell$ divides $p$.

Suppose now that we also have that $K_-$ is smoothly almost concordant to $K_-'$. By exactly the same argument, we obtain the statement that $p-\ell$ divides $p$.

As $0<\ell<p$, at least one of $\ell$ and $p-\ell$ is greater than or equal to $p/2$. Further, since they both divide $p$, it follows that at least one of them is $p/2$, which implies that $\ell=p/2$. But this contradicts our initial hypotheses, as $x$ does not have order 2. Hence either $K_+$ is not smoothly concordant to $K_+'$, or $K_-$ is not smoothly concordant to $K_-'$. Either way, we obtain a pair of knots as claimed in the statement of the proposition.
\end{proof}

\begin{proposition}\label{prop:L2nqnotRP3}Suppose $n>1$ and let $x\in[S^1,L(2n,q)]$ be the element of order~$2$. Then there exist topologically concordant knots $K$ and $K'$ in $L(2n,q)$ representing $x$ which are distinct in smooth almost concordance.
\end{proposition}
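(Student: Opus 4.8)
The plan is to transcribe the covering-link and branched-cover strategy behind the proof of Theorem~\ref{thm:A} (Propositions~\ref{prop:BD} and~\ref{prop:heddenfamily}) to a knot representing the order~$2$ class, rather than the null homotopic class. Fix the identification $\pi_1(L(2n,q)) \cong \Z/2n\Z$ so that $x$ corresponds to $n$, and set $-D = \Wh^-(-T_{2,3})$, recalling $\tau(-D) = -1$. First I would construct, for each knot $J \subset S^3$, a knot $K_J \subset L(2n,q)$ representing $x$ by a pattern in the solid torus $U = S^1 \times D^2$ that winds $n$ times and into which $J$ is tied by Bing-type clasps, in analogy with (but winding $n$ times, unlike) the null homotopic pattern $P_J$ of Figure~\ref{fig:pattern1}. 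The purpose of the clasps is exactly as in Theorem~\ref{thm:A}: the pattern should be arranged so that every component of the preimage of $K_J$ in the universal cover $S^3$ is unknotted, with the copy of $J$ pushed into the ``bands'' linking adjacent components. Since $-D$ has Alexander polynomial~$1$ and is therefore topologically slice, the knots $K := K_U$ and $K' := K_{-D}$ are then topologically concordant, and both represent $x$.

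Next, suppose for a contradiction that $K$ and $K'$ are smoothly almost concordant, and let $\pi\colon S^3 \to L(2n,q)$ be the universal cover. By Lemma~\ref{lem:coveringlink-lens-spaces} the preimages $\pi^{-1}(K)$ and $\pi^{-1}(K')$ are $n$-component links, and by construction all of their components are unknotted. This is the structural feature distinguishing the order~$2$ case from the generic case of Proposition~\ref{prop:lens_space_generic}: because the components are unknots I can apply Proposition~\ref{prop:almostimpliesconc} directly, which absorbs the unknown local knot and yields a genuine smooth concordance $\pi^{-1}(K) \sim \pi^{-1}(K')$ in $S^3 \times I$. There is no leftover local summand $\widehat{J}$ to contend with, which is precisely why one can bypass the modular Van Cott computation — which degenerates here, since $\ell/d = n/\gcd(n,2n) = 1$ forces the trivial cable and the surviving modulus $p/d=2$ is too small to obstruct anything.

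With the smooth concordance of covering links in hand, I would restrict to a two-component covering double made of two adjacent components; this is the one step where $n > 1$ is essential, since it guarantees at least two components, whereas for $\R P^3 = L(2,1)$ the preimage is a single knot and this step is unavailable, forcing the separate $\Upsilon$ argument. Because $\pi^{-1}(A)$ is deck-equivariant, the concordance matches covering doubles of $K$ to covering doubles of $K'$; and, as in Proposition~\ref{prop:BD}, after the Boden--Nagel embedding of Lemma~\ref{lem:boden-nagel} each covering double is a Bing double, so the concordance restricts to a concordance $\BD(U) \sim \BD(-D)$. Applying the Cha--Livingston--Ruberman branched-cover observation recalled in the proof of Proposition~\ref{prop:heddenfamily} — taking the double cover branched over one unknotted component of the concordance, where a Mayer--Vietoris argument gives rational homology $S^3$ — produces from a single component of the preimage of the other component a rational smooth concordance from $U \# U^r$ to $(-D) \# (-D)^r$. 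Since $\tau$ is a rational concordance invariant, a homomorphism, orientation-reversal invariant, and a lower bound for the rational slice genus, this would force $0 = \tau(U) = \tau\big((-D)\#(-D)^r\big) = 2\tau(-D) = -2$, a contradiction. Hence $K$ and $K'$ are not smoothly almost concordant.

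The main obstacle I anticipate is entirely in the first step: one must exhibit a concrete pattern in $S^1 \times D^2$ representing the winding-$n$ (hence order~$2$) class whose universal covering link is a chain of $n$ unknots admitting two-component covering doubles that are Bing doubles with $J$ confined to the bands. The delicate points are verifying that the components are genuinely unknotted — so that Proposition~\ref{prop:almostimpliesconc} applies and the branched cover of $S^3$ is again $S^3$ — and that each covering double really is $\BD(J)$ after the embedding $\phi$ of Lemma~\ref{lem:boden-nagel}, with any band-wise knotting introduced by $\phi$ removable by isotopy as in Proposition~\ref{prop:BD}. Once this order~$2$ analogue of Figure~\ref{fig:pattern1} is designed and its cover computed, everything downstream is a direct transcription of the Theorem~\ref{thm:A} argument.
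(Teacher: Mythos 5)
There is a genuine and, unfortunately, fatal gap at exactly the step you deferred to the end: a two--component covering double of a knot representing the order~$2$ class can \emph{never} be a Bing double, for a parity reason your plan does not account for. Since $[K_J]=n$ has order $2$ in $\pi_1(L(2n,q))\cong\Z/2n$, the components of the covering link correspond to cosets of $\langle n\rangle$, and the stabiliser of each component is $\langle n\rangle$ itself; hence the deck transformation $t^n$ (which, in the coordinates of Section~\ref{subsec:lensspaces}, is $(z_1,z_2)\mapsto(-z_1,-z_2)$ because $q$ is odd) preserves \emph{every} component $C_i$. Each $C_i$ is therefore the connected preimage of an essential knot $\bar C_i$ in the intermediate quotient $S^3/\langle t^n\rangle\cong\R P^3$. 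Now for any two disjoint essential knots in $\R P^3$ the linking number of their preimages in $S^3$ is always \emph{odd}: it is invariant mod~$2$ under crossing changes downstairs (a crossing change between $\bar C_i$ and $\bar C_j$ lifts to two crossing changes of the same sign, so $\lk$ changes by $\pm2$, while self-crossing changes lift to self-crossing changes and do not affect it), all essential knots in $\R P^3$ are freely homotopic, and the model pair of parallel copies of the core of the Heegaard solid torus, lifted via $(t,z)\mapsto(t^2,t^{-q}z)$, has $\lk=q$, odd. This matches the paper's own computation in this setting: two-component sublinks of the covering link of $K_U$ are torus links with pairwise linking number $q+2$, odd. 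But the two components of $\BD(J)$ have linking number $0$, so no choice of pattern --- however clever the clasps --- can produce $\BD(J)$, nor any $\lk=0$ surrogate such as $\BD(J\#J)$, as a covering double. The downstream transcription collapses for the same reason: with odd linking number, the double cover of $S^3$ branched over one unknotted component pulls the other component back to a single \emph{connected} knot, not to the two copies of $J\#J^r$ that the Cha--Livingston--Ruberman step requires, so the identity $0=2\tau(-D)$ is never reached.

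This parity obstruction is precisely why the paper's proof of Proposition~\ref{prop:L2nqnotRP3} abandons the Bing double/branched cover mechanism of Theorem~\ref{thm:A} and argues differently. There, $K_J$ is built from Whitehead-type $W(J)$--boxes wound $n$ times (Figure~\ref{fig:definingfig}); the covering-link components are unknots, and Proposition~\ref{prop:almostimpliesconc} is applied exactly as you propose; but then one reverses the orientation of one component of a two-component sublink, performs a band move, and concatenates with the concordance and a capping disc to obtain a smoothly embedded genus one surface in $D^4$ whose boundary is $\Wh^+(J)\#\Wh^+(J)$ when $n>2$, respectively $2\Wh^+(J)\#2(\Wh^+(J))^r$ when $n=2$. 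Taking $J=\Wh^+(T_{2,3})$, Theorem~\ref{thm:hedden} gives $\tau=2$, respectively $4$, contradicting the slice genus bound $g\geq|\tau|$. Note that the $t^n$-equivariant ``doubling'' that destroys your Bing double --- everything joining adjacent components appears in $t^n$-translated pairs --- is exactly what the paper exploits: it is why the resulting boundary knot is a sum of two (or four) Whitehead doubles, pushing $|\tau|$ strictly above the genus of the surface.
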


\begin{proof}

For a knot $J\subset S^3$ define a knot $K_J$ in $L(2n,q)$ as in Figure~\ref{fig:definingfig}.
\begin{figure}[h]

\begin{tikzpicture}[scale=1.14]

\begin{scope}[shift={(0,0)}]\begin{knot}[
clip radius=7pt,
clip width=5,
]

\strand[black,thick] (2,2) ..  controls +(0,0) and +(-0,0) .. (4,2) .. controls +(0.5,0) and +(-0.5,0) .. (5,0) .. controls +(0.5,0) and +(-0.5,0) .. (7,0);

\strand[black,thick] (2,1.6) ..  controls +(0,0) and +(-0,0) .. (4,1.6) .. controls +(0.5,0) and +(-0.5,0) .. (5,2) .. controls +(0.5,0) and +(-0.5,0) .. (7,2);

\strand[black,thick] (2,1.2) ..  controls +(0,0) and +(-0,0) .. (4,1.2) .. controls +(0.5,0) and +(-0.5,0) .. (5,1.6) .. controls +(0.5,0) and +(-0.5,0) .. (7,1.6);

\strand[black,thick] (2,0.4) ..  controls +(0,0) and +(-0,0) .. (4,0.4) .. controls +(0.5,0) and +(-0.5,0) .. (5,0.8) .. controls +(0.5,0) and +(-0.5,0) .. (7,0.8);

\strand[black,thick] (2,0) ..  controls +(0,0) and +(-0,0) ..  (4,0) .. controls +(0.5,0) and +(-0.5,0) .. (5,0.4) .. controls +(0.5,0) and +(-0.5,0) .. (7,0.4);

\strand[black,thick] (6,2.5) .. controls +(0.5,0) and +(0.5,0) .. (6,-0.5) .. controls +(-0.5,0) and +(-0.5,0) .. (6,2.5);

\flipcrossings{13,11,9,7,5}

\end{knot}

\node at (6.4,-0.7) {$-2n/q$};
\node at (2.1,0.9) {$\vdots$};
\node at (6.9,1.25) {$\vdots$};

\draw [thick, draw=black, fill=white]
       (2.5, 1.4) -- (3.5,1.4) -- (3.5,2.2) -- (2.5,2.2) -- cycle;
\node[rotate=0] at (3,1.8) {$W(J)$};

\draw [decorate,decoration={brace,amplitude=6pt,raise=8pt}] (2,-0.1) -- (2,2.1);
\node at (1.4,1) {$n$};

\end{scope}

\begin{scope}[xscale=1.25, yscale=1.25, shift={(6.2,1.2)}]\begin{knot}[
clip radius=7pt,
clip width=5,
]

\strand [black, thick] (1.6,0.6)
to [out=right, in=left] (2.2,0.6)
to [out=right, in=left, looseness=1] (2.6,-0.65)
to [out=right, in=left, looseness=1] (3,0.6)
to [out=right, in=left] (3.6, 0.6);

\strand [black, thick] (1.6,-0.45)
to [out=right, in=left] (2.15, -0.45)
to [out=right, in=left] (3.05, -0.45)
to [out=right, in=left] (3.6, -0.45);

\flipcrossings{1}
\end{knot}

\draw [thick, draw=black, fill=white]
       (2.3, -0.25) -- (2.9,-0.25) -- (2.9,0.35) -- (2.3,0.35) -- cycle;
\node[rotate=90] at (2.6,0.05) {$J$};

\draw [thick, draw=black]
       (1.6, -0.8) -- (3.6,-0.8) -- (3.6,0.8) -- (1.6,0.8) -- cycle;

\draw [thick, draw=black, fill=white]
       (0, -0.4) -- (1,-0.4) -- (1,0.4) -- (0,0.4) -- cycle;
\node[rotate=0] at (0.5,0) {$W(J)$};

\node[rotate=0] at (1.3,0) {$:=$};

\end{scope}

\begin{scope}[shift={(8.1,-0.3)}]
\draw [thick, draw=black, fill=white]
       (0, -0.4) -- (1,-0.4) -- (1,0.4) -- (0,0.4) -- cycle;
\node[rotate=0] at (0.5,0) {$W(J)$};
\draw[black,thick,->] (0,-0.25)
to [out=left, in=down] (-0.25,0)
to [out=up, in=left] (0,0.25);
\draw[black,thick,->](1,0.25)
to [out=right, in=up] (1.25,0)
to [out=down, in=right] (1,-0.25);
\node[rotate=0] at (2.2,0) {$=\Wh^+(J)$};
\end{scope}

\end{tikzpicture}
\caption{The knot $K_J\subset L(2n,q)$ representing the element $x$ of order 2. In the left-hand diagram, the strands are oriented from left to right.}
\label{fig:definingfig}
\end{figure}
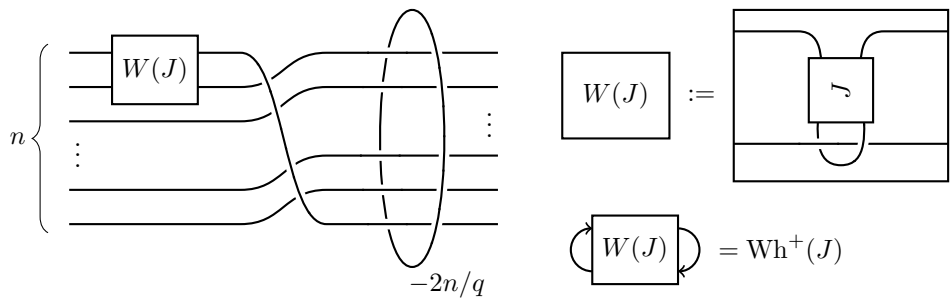
This knot winds algebraically $n$ times around the surgery curve for $L(2n,q)$ and so represents the element of order 2 in $[S^1,L(2n,q)]$. Note that closing off the $W(J)$--box, as indicated in Figure~\ref{fig:definingfig}, results in the positive Whitehead double of $J$.

Write $\widetilde{K}_J$ for the link in $S^3$ obtained as the covering link of $K_J$ under the universal cover. Applying Lemma~\ref{lem:coveringlink-lens-spaces} to $K_J$, we see there are $\gcd(n,2n)=n$ components of $\widetilde{K}_J$. Each component of $\widetilde{K}_J$ intersects any given $W(J)$--box in either~$0$ or~$2$ points (although a given component might hit multiple boxes), showing that these boxes do not affect the knotting of a single component of~$\widetilde{K}_J$. Thus the components of $\wt{K}_J$ are torus knots $T(1,2+q)$, i.e.\ are unknotted. Now we choose a two component sublink $L_J\subset S^3$ of $\widetilde{K}_J$, consisting of two adjacent strands. The possibilities are depicted in Figure~\ref{fig:2component}.
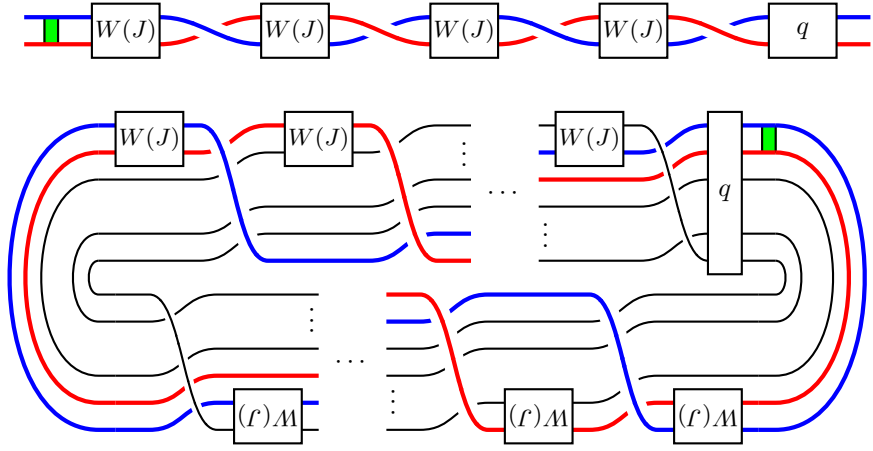
\begin{figure}
\begin{tikzpicture}[scale=0.9]

\begin{scope}[shift={(0,0)}]

\begin{knot}[
clip radius=7pt,
clip width=5,
]

\strand[blue,ultra thick] (2.5,2) ..  controls +(0,0) and +(-0,0) .. (4,2) .. controls +(0.5,0) and +(-0.5,0) .. (5,0)  ..  controls +(0,0) and +(-0,0) ..  (6.5,0) .. controls +(0.5,0) and +(-0.5,0) .. (7.5,0.4) .. controls +(0.5,0) and +(-0.5,0) .. (8,0.4);

\strand[red, ultra thick] (2.5,1.6) ..  controls +(0,0) and +(-0,0) .. (4,1.6) .. controls +(0.5,0) and +(-0.5,0) .. (5,2)  ..  controls +(0,0) and +(-0,0) .. (6.5,2)  .. controls +(0.5,0) and +(-0.5,0) .. (7.5,0)  .. controls +(0.5,0) and +(-0.5,0) .. (8,0);

\strand[black,thick] (2.5,1.2) ..  controls +(0,0) and +(-0,0) .. (4,1.2) .. controls +(0.5,0) and +(-0.5,0) .. (5,1.6) ..  controls +(0,0) and +(-0,0) .. (6.5,1.6) .. controls +(0.5,0) and +(-0.5,0) .. (7.5,2) .. controls +(0.5,0) and +(-0.5,0) .. (8,2);

\strand[black,thick] (2.5,0.4) ..  controls +(0,0) and +(-0,0) .. (4,0.4) .. controls +(0.5,0) and +(-0.5,0) .. (5,0.8) ..  controls +(0,0) and +(-0,0) .. (6.5,0.8) .. controls +(0.5,0) and +(-0.5,0) .. (7.5,1.2) .. controls +(0.5,0) and +(-0.5,0) .. (8,1.2);

\strand[black,thick] (2.5,0) ..  controls +(0,0) and +(-0,0) ..  (4,0) .. controls +(0.5,0) and +(-0.5,0) .. (5,0.4) ..  controls +(0,0) and +(-0,0) .. (6.5,0.4) .. controls +(0.5,0) and +(-0.5,0) .. (7.5,0.8) .. controls +(0.5,0) and +(-0.5,0) .. (8,0.8);

\flipcrossings{2}

\end{knot}

\draw [thick, draw=black, fill=white]
       (2.75, 1.4) -- (3.75,1.4) -- (3.75,2.2) -- (2.75,2.2) -- cycle;
\node[rotate=0] at (3.25,1.8) {\small{$W(J)$}};

\draw [thick, draw=black, fill=white]
       (5.25, 1.4) -- (6.25,1.4) -- (6.25,2.2) -- (5.25,2.2) -- cycle;
\node[rotate=0] at (5.75,1.8) {\small{$W(J)$}};

\node at (7.9,1.7) {$\vdots$};

\end{scope}

\begin{scope}[shift={(6.5,0)}]

\draw [thick, draw=green, fill=green]
       (5.8, 1.6) -- (6,1.6) -- (6,2) -- (5.8,2) -- cycle;
\draw [thick, black]
       (5.8, 1.6) -- (5.8, 2);
\draw [thick, black]
       (6, 1.6) -- (6, 2);

\begin{knot}[
clip radius=7pt,
clip width=5,
]

\strand[black,thick] (2.5,2) ..  controls +(0,0) and +(-0,0) .. (4,2) .. controls +(0.5,0) and +(-0.5,0) .. (5,0) .. controls +(0.5,0) and +(-0.5,0) .. (6,0);

\strand[blue,ultra thick] (2.5,1.6) ..  controls +(0,0) and +(-0,0)  .. (4,1.6) .. controls +(0.5,0) and +(-0.5,0) .. (5,2) .. controls +(0.5,0) and +(-0.5,0) .. (6,2)  ;

\strand[red, ultra thick] (2.5,1.2) ..  controls +(0,0) and +(-0,0) .. (4,1.2)  .. controls +(0.5,0) and +(-0.5,0) .. (5,1.6) .. controls +(0.5,0) and +(-0.5,0) .. (6,1.6) ;

\strand[black,thick] (2.5,0.8) ..  controls +(0,0) and +(-0,0) .. (4,0.8) .. controls +(0.5,0) and +(-0.5,0) .. (5,1.2) .. controls +(0.5,0) and +(-0.5,0) .. (6,1.2) ;

\strand[black,thick] (2.5,0) ..  controls +(0,0) and +(-0,0) ..  (4,0) .. controls +(0.5,0) and +(-0.5,0) .. (5,0.4)  .. controls +(0.5,0) and +(-0.5,0) .. (6,0.4) ;

\flipcrossings{}

\end{knot}

\draw [thick, draw=black, fill=white]
       (5, -0.2) -- (5.5,-0.2) -- (5.5,2.2) -- (5,2.2) -- cycle;
\node[rotate=0] at (5.25,1) {$q$};

\draw [thick, draw=black, fill=white]
       (2.75, 1.4) -- (3.75,1.4) -- (3.75,2.2) -- (2.75,2.2) -- cycle;
\node[rotate=0] at (3.25,1.8) {\small{$W(J)$}};

\node at (2.6,0.5) {$\vdots$};

\node at (2,1) {$\cdots$};

\end{scope}

\begin{scope}[shift={(14.75,-0.5)}, rotate=180]

\begin{scope}[shift={(0,0)}]\begin{knot}[
clip radius=7pt,
clip width=5,
]

\strand[blue,ultra thick] (2.5,2) ..  controls +(0,0) and +(-0,0) .. (4,2) .. controls +(0.5,0) and +(-0.5,0) .. (5,0)  ..  controls +(0,0) and +(-0,0) ..  (6.5,0) .. controls +(0.5,0) and +(-0.5,0) .. (7.5,0.4) .. controls +(0.5,0) and +(-0.5,0) .. (8,0.4);

\strand[red,ultra thick] (2.5,1.6) ..  controls +(0,0) and +(-0,0) .. (4,1.6) .. controls +(0.5,0) and +(-0.5,0) .. (5,2)  ..  controls +(0,0) and +(-0,0) .. (6.5,2)  .. controls +(0.5,0) and +(-0.5,0) .. (7.5,0)  .. controls +(0.5,0) and +(-0.5,0) .. (8,0);

\strand[black,thick] (2.5,1.2) ..  controls +(0,0) and +(-0,0) .. (4,1.2) .. controls +(0.5,0) and +(-0.5,0) .. (5,1.6) ..  controls +(0,0) and +(-0,0) .. (6.5,1.6) .. controls +(0.5,0) and +(-0.5,0) .. (7.5,2) .. controls +(0.5,0) and +(-0.5,0) .. (8,2);

\strand[black,thick] (2.5,0.4) ..  controls +(0,0) and +(-0,0) .. (4,0.4) .. controls +(0.5,0) and +(-0.5,0) .. (5,0.8) ..  controls +(0,0) and +(-0,0) .. (6.5,0.8) .. controls +(0.5,0) and +(-0.5,0) .. (7.5,1.2) .. controls +(0.5,0) and +(-0.5,0) .. (8,1.2);

\strand[black,thick] (2.5,0) ..  controls +(0,0) and +(-0,0) ..  (4,0) .. controls +(0.5,0) and +(-0.5,0) .. (5,0.4) ..  controls +(0,0) and +(-0,0) .. (6.5,0.4) .. controls +(0.5,0) and +(-0.5,0) .. (7.5,0.8) .. controls +(0.5,0) and +(-0.5,0) .. (8,0.8);

\flipcrossings{2}

\end{knot}

\draw [thick, draw=black, fill=white]
       (2.75, 1.4) -- (3.75,1.4) -- (3.75,2.2) -- (2.75,2.2) -- cycle;
\node[rotate=180] at (3.25,1.8) {\small{$W(J)$}};

\draw [thick, draw=black, fill=white]
       (5.25, 1.4) -- (6.25,1.4) -- (6.25,2.2) -- (5.25,2.2) -- cycle;
\node[rotate=180] at (5.75,1.8) {\small{$W(J)$}};

\node[rotate=180] at (7.9,1.7) {$\vdots$};

\end{scope}

\begin{scope}[shift={(6.5,0)}]\begin{knot}[
clip radius=7pt,
clip width=5,
]

\strand[black,thick] (2.5,2) ..  controls +(0,0) and +(-0,0) .. (4,2) .. controls +(0.5,0) and +(-0.5,0) .. (5,0) .. controls +(0.5,0) and +(-0.5,0) .. (5.5,0);

\strand[blue, ultra thick] (2.5,1.6) ..  controls +(0,0) and +(-0,0)  .. (4,1.6) .. controls +(0.5,0) and +(-0.5,0) .. (5,2) .. controls +(0.5,0) and +(-0.5,0) .. (5.5,2)  ;

\strand[red, ultra thick] (2.5,1.2) ..  controls +(0,0) and +(-0,0) .. (4,1.2)  .. controls +(0.5,0) and +(-0.5,0) .. (5,1.6) .. controls +(0.5,0) and +(-0.5,0) .. (5.5,1.6) ;

\strand[black,thick] (2.5,0.8) ..  controls +(0,0) and +(-0,0) .. (4,0.8) .. controls +(0.5,0) and +(-0.5,0) .. (5,1.2) .. controls +(0.5,0) and +(-0.5,0) .. (5.5,1.2) ;

\strand[black,thick] (2.5,0) ..  controls +(0,0) and +(-0,0) ..  (4,0) .. controls +(0.5,0) and +(-0.5,0) .. (5,0.4)  .. controls +(0.5,0) and +(-0.5,0) .. (5.5,0.4) ;

\flipcrossings{}

\end{knot}

\draw [thick, draw=black, fill=white]
       (2.75, 1.4) -- (3.75,1.4) -- (3.75,2.2) -- (2.75,2.2) -- cycle;
\node[rotate=180] at (3.25,1.8) {\small{$W(J)$}};

\node[rotate=180] at (2.6,0.5) {$\vdots$};

\node at (2,1) {$\cdots$};

\end{scope}
\end{scope}

\draw[blue,ultra thick] (2.5,2)
to [out=left, in=left] (2.5,-2.5)
to [out=right, in=left] (2.75,-2.5);

\draw[red,ultra thick] (2.5,1.6)
to [out=left, in=left] (2.5,-2.1)
to [out=right, in=left] (2.75,-2.1);

\draw[black, thick] (2.5,1.2)
to [out=left, in=left] (2.5,-1.7)
to [out=right, in=left] (2.75,-1.7);

\draw[black, thick] (2.5,0.4)
to [out=left, in=left] (2.5,-0.9)
to [out=right, in=left] (2.75,-0.9);

\draw[black, thick] (2.5,0)
to [out=left, in=left] (2.5,-0.5)
to [out=right, in=left] (2.75,-0.5);

\draw[blue,ultra thick] (12.5,2)
to [out=right, in=right] (12.5,-2.5)
to [out=left, in=right] (12.25,-2.5);

\draw[red,ultra thick] (12.5,1.6)
to [out=right, in=right] (12.5,-2.1)
to [out=left, in=right] (12.25,-2.1);

\draw[black, thick] (12.5,1.2)
to [out=right, in=right] (12.5,-1.7)
to [out=left, in=right] (12.25,-1.7);

\draw[black, thick] (12.5,0.4)
to [out=right, in=right] (12.5,-0.9)
to [out=left, in=right] (12.25,-0.9);

\draw[black, thick] (12.5,0)
to [out=right, in=right] (12.5,-0.5)
to [out=left, in=right] (12.25,-0.5);

\begin{scope}[shift={(2.4,3)}]

\draw [thick, draw=green, fill=green]
       (-0.7, 0.2) -- (-0.5,0.2) -- (-0.5,0.6) -- (-0.7,0.6) -- cycle;
\draw [thick, black]
       (-0.7, 0.2) -- (-0.7, 0.6);
\draw [thick, black]
       (-0.5, 0.2) -- (-0.5, 0.6);

\begin{knot}[
clip radius=7pt,
clip width=5,
]

\strand [blue,ultra thick] (-1,0.6)
to [out=right, in=left] (0,0.6)
to [out=right, in=left] (1,0.6)
to [out=right, in=left] (2.5,0.2)
to [out=right, in=left] (3.5,0.2)
to [out=right, in=left] (5,0.6)
to [out=right, in=left] (6,0.6)
to [out=right, in=left] (7.5,0.2)
to [out=right, in=left] (8.5,0.2)
to [out=right, in=left] (10,0.6)
to [out=right, in=left] (11,0.6)
to [out=right, in=left] (11.5,0.6);

\strand [red,ultra thick] (-1,0.2)
to [out=right, in=left] (0,0.2)
to [out=right, in=left] (1,0.2)
to [out=right, in=left] (2.5,0.6)
to [out=right, in=left] (3.5,0.6)
to [out=right, in=left] (5,0.2)
to [out=right, in=left] (6,0.2)
to [out=right, in=left] (7.5,0.6)
to [out=right, in=left] (8.5,0.6)
to [out=right, in=left] (10,0.2)
to [out=right, in=left] (11,0.2)
to [out=right, in=left] (11.5,0.2);

\flipcrossings{2,4}

\end{knot}

\draw [thick, draw=black, fill=white]
       (0, 0) -- (1,0) -- (1,0.8) -- (0,0.8) -- cycle;
\node[rotate=0] at (0.5,0.4) {\small{$W(J)$}};

\draw [thick, draw=black, fill=white]
       (2.5, 0) -- (3.5,0) -- (3.5,0.8) -- (2.5,0.8) -- cycle;
\node[rotate=0] at (3,0.4) {\small{$W(J)$}};

\draw [thick, draw=black, fill=white]
       (5, 0) -- (6,0) -- (6,0.8) -- (5,0.8) -- cycle;
\node[rotate=0] at (5.5,0.4) {\small{$W(J)$}};

\draw [thick, draw=black, fill=white]
       (7.5, 0) -- (8.5,0) -- (8.5,0.8) -- (7.5,0.8) -- cycle;
\node[rotate=0] at (8,0.4) {\small{$W(J)$}};

\draw [thick, draw=black, fill=white]
       (10, 0) -- (11,0) -- (11,0.8) -- (10,0.8) -- cycle;
\node[rotate=0] at (10.5,0.4) {$q$};

\end{scope}

\end{tikzpicture}
\caption{2--component sublinks $L_J$ of $\widetilde{K}_J$ when $n=2$ and when~$n>2$. Each case has a single band move, that can only be performed after orientation reversal on one strand.  Before the band move, in the top figure the orientations run from left to right, while in the bottom figure the orientations run clockwise.}
\label{fig:2component}
\end{figure}
We wish to perform band moves as indicated in Figure~\ref{fig:2component}. In order to do so, first reverse the orientation on one of the link components touched by the band. Call the resulting link $\widehat{L}_J$. Now perform the indicated band move. When $n=2$, the band move determines a smoothly embedded surface~$\Sigma(J)\subset S^3\times [0,1]$ between $\widehat{L}_J$ and the connected sum~$2\Wh^+(J)\#2(\Wh^+(J))^r$. When $n>2$, the band move determines an embedded smooth surface from $\widehat{L}_J$ to $\Wh^+(J)\#\Wh^+(J)$. In both cases, $\Sigma(J)$ is a pair of trousers.

Suppose $J\subset S^3$ is such that $K_J$ and $K_{U}$ are smoothly almost concordant where~$U\subset S^3$ is the unknot. As the components of the covering links are unknotted, we may apply Proposition~\ref{prop:almostimpliesconc} to conclude that the covering links $\widetilde{K}_J$ and $\widetilde{K}_{U}$ are smoothly concordant. Form the link $L_J$ and note that it is concordant to a 2--component sublink of $\widetilde{K}_{U}$. Any $2$--component sublink of $\widetilde{K}_{U}$ is a torus link~$T_{2,q+2}$. Form the link $\widehat{L}_J$ by reversing the orientation on one component of $L_J$. Reverse the orientation on the corresponding component of the concordance and on the corresponding component of $\widetilde{K}_{U}$ to obtain a concordance $A\subset S^3\times [0,1]$ from~$\widehat{L}_J$ to a $2$--component link $\widehat{U}$. Choose some band move on $\widehat{U}$ which results in the unknot and write $\Sigma'\subset S^3\times[0,1]$ for the surface which is the trace of that band move. Concatenate smoothly embedded surfaces as follows:\[\Sigma:=\Sigma(J)\cup_{\widehat{L}_J}A\cup_{\widehat{U}}\Sigma'\subset S^3\times[0,1],\]drawn schematically in Figure~\ref{fig:surface}.
\begin{figure}

\begin{tikzpicture}[scale=1.7]

\draw[black, thick] (-2,0.3)
to [out=right, in=left] (-1,0.9)
to [out=right, in=left] (0,0.9)
to [out=right, in=left] (1,0.9)
to [out=right, in=left] (2,0.3);

\draw [thick, black] (2,-0.3)
to [out=left, in=right] (1,-0.9)
to [out=left, in=right] (0,-0.9)
to [out=left, in=right] (-1,-0.9)
to [out=left, in=right] (-2,-0.3);

\draw[black, thick] (-1,0.3)
to [out=right, in=left] (1,0.3)
to [out=right, in=right, looseness=1.2] (1, -0.3)
to [out=left, in=right] (-1, -0.3)
to [out=left, in=left, looseness=1.2] (-1, 0.3);

\draw[black, thick, dashed] (-2,0.3)
to [out=right, in=right] (-2, -0.3);
\draw[black, thick] (-2,- 0.3)
to [out=left, in=left] (-2,0.3);

\draw[black, thick, dashed] (-1,0.9)
to [out=right, in=right] (-1, 0.3);
\draw[black, thick] (-1, 0.3)
to [out=left, in=left] (-1,0.9);

\draw[black, thick, dashed] (1,0.9)
to [out=right, in=right] (1, 0.3);
\draw[black, thick] (1, 0.3)
to [out=left, in=left] (1,0.9);

\draw[black, thick, dashed] (-1,-0.3)
to [out=right, in=right] (-1, -0.9);
\draw[black, thick] (-1, -0.9)
to [out=left, in=left] (-1,-0.3);

\draw[black, thick, dashed] (1,-0.3)
to [out=right, in=right] (1, -0.9);
\draw[black, thick] (1, -0.9)
to [out=left, in=left] (1,-0.3);

\draw[black, thick] (2,0.3)
to [out=right, in=right] (2, -0.3);
\draw[black, thick] (2,- 0.3)
to [out=left, in=left] (2,0.3);

\node[rotate=0] at (0,0) {$A$};
\node[rotate=0] at (-1.5,0) {\small{$\Sigma(J)$}};
\node[rotate=0] at (1.53,0) {\small{$\Sigma'$}};
\node[rotate=0] at (-1,-1.2) {$\widehat{L}_J$};
\node[rotate=0] at (1,-1.2) {$\widehat{U}$};
\draw [decorate,decoration={brace,amplitude=6pt,raise=8pt}] (2.5,1) -- (2.5,-1);
\node at (3,0) {$\Sigma$};

\draw[black, thick, ->] (0.15,0.05)
to [out=25, in=-45] (0.25,0.5);

\draw[black, thick, ->] (0.15,-0.05)
to [out=-25, in=45] (0.25,-0.5);

\end{tikzpicture}
\caption{The genus one surface $\Sigma$.}
\label{fig:surface}
\end{figure}
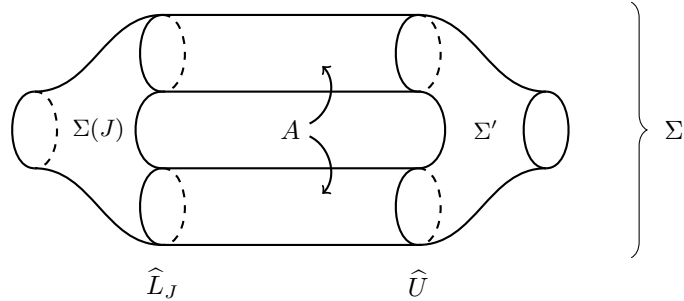
Cap off the unknotted end of $\Sigma$ by an unknotted~$D^2\subset D^4$. The capped off surface $\Sigma\cup D^2$ is a smoothly embedded genus 1 surface in the 4--ball that bounds
\[\begin{array}{rl}
2\Wh^+(J)\#2(\Wh^+(J))^r& \text{when $n=2$},\\
\Wh^+(J)\#\Wh^+(J)& \text{when $n>2$.}
\end{array}\]

Now we set $J=\Wh^+(T_{2,3})$. Recall the absolute value of the $\tau$ invariant is a lower bound for the smooth slice genus. We have already seen that $\tau(\Wh^+(T_{2,3}))=1$, so by Theorem~\ref{thm:hedden} we have $\tau(\Wh^+(J))=\tau(\Wh^+(\Wh^+(T_{2,3})))=1$. Hence $\tau(2\Wh^+(J)\#2(\Wh^+(J))^r)=4$ and $\tau(\Wh^+(J)\#\Wh^+(J))=2$. In each case, this contradicts the existence of a smoothly embedded genus 1 slicing surface. This contradiction shows that $K_J$ is not smoothly almost concordant to $K_{U}$. However, the positive Whitehead double of a knot is always topologically concordant to the unknot, so $K_J$ and $K_{U}$ are topologically concordant.
\end{proof}

\subsection{Proof of Theorem~\ref{thm:B} when $x$ is order 2 and $Y= \R P^3$}

We will make an argument using the Ozsv{\'a}th-Stipsicz-Szab{\'o} $\Upsilon$ invariant \cite{Ozsvath14} as the tool to distinguish the smooth almost concordance classes when $Y=\R P^3$ and $x$ is of order 2. We recall some features of this invariant. For a knot $J\subset S^3$ there is a certain piecewise linear function $\Upsilon_J\colon [0,2]\to \R$ such that $\Upsilon_J(0)=\Upsilon_J(2)=0$ and~$\Upsilon_J(t)=\Upsilon_J(2-t)$. The $\Upsilon$ invariants define a group homomorphism from $\mathcal{C}^{\Diff}$ to the group of $\R$--valued continuous functions on $[0,2]$. When the slope of $\Upsilon_J(t)$ is defined, it is an integer \cite[Proposition 1.4]{Ozsvath14}.

\begin{proposition}\label{prop:RP3}Let $x\in[S^1,L(2,1)]$ be the element of order 2. Then there exist topologically concordant knots $K$ and $K'$ representing $x$ which are distinct in smooth almost concordance.
\end{proposition}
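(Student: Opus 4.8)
The plan is to produce a single pair $K = K_U$ and $K' = K_J$ of knots in $\R P^3 = L(2,1)$ representing the order-$2$ class, built from the same $W(J)$-box pattern used in Proposition~\ref{prop:L2nqnotRP3} but now with the single winding appropriate to $n=1$. The crucial structural difference from the $n>1$ case is that, by Lemma~\ref{lem:coveringlink-lens-spaces}, the covering link in the universal cover $S^3$ now has $\gcd(1,2)=1$ components; that is, the covering knot $\widetilde{K}_J\subset S^3$ is a single, genuinely knotted knot. This is exactly why the covering-link argument of Proposition~\ref{prop:L2nqnotRP3} fails here: there is no $2$--component sublink to split off, and the hypotheses of Proposition~\ref{prop:almostimpliesconc} (unknotted components) are not met, so an entirely different obstruction is required.

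First I would identify $\widetilde{K}_J$ and $\widetilde{K}_U$ explicitly by stacking the pattern twice and inserting the single full twist coming from $q=1$, as in Figure~\ref{fig:CoveredLink}. This should realise $\widetilde{K}_J$ as a cable-type satellite built from the positive Whitehead double $\Wh^+(J)$, with $\widetilde{K}_U$ an explicit (possibly trivial) torus knot, so that both $\Upsilon$ functions can be determined. The purpose of the construction is to arrange that the piecewise linear function $\Upsilon_{\widetilde{K}_J} - \Upsilon_{\widetilde{K}_U}$ acquires a slope that is an \emph{odd} integer at some interior breakpoint $t_0\in(0,2)$.

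The key maneuver is then the concordance-lifting step. Suppose, for contradiction, that $K_J$ and $K_U$ are smoothly almost concordant. Then there is a knot $J_0\subset S^3$ and a smooth concordance $K_J\#J_0 \sim K_U$ in $\R P^3\times I$. Because the single covering knot covers its image with degree $d(\widetilde{K})=2$, lifting this concordance to the double cover $S^3\times I$ and invoking the discussion of Section~\ref{subsec:coveringlinks} yields a smooth concordance in $S^3$ from $\widetilde{K}_J \# 2J_0$ to $\widetilde{K}_U$. Since $\Upsilon$ is a homomorphism from $\mathcal{C}^{\Diff}$ to the continuous $\R$--valued functions on $[0,2]$ and is a smooth concordance invariant, we obtain
\[\Upsilon_{\widetilde{K}_J} - \Upsilon_{\widetilde{K}_U} = -2\,\Upsilon_{J_0}.\]
By \cite[Proposition 1.4]{Ozsvath14} the function $\Upsilon_{J_0}$ has integer slopes wherever it is differentiable, so every slope of the right-hand side is an \emph{even} integer. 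This contradicts the odd interior slope of the left-hand side secured in the previous step, and the contradiction completes the proof.

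The main obstacle is the explicit computation in the second step: pinning down the satellite knot $\widetilde{K}_J$ and evaluating its $\Upsilon$ function at the relevant interior parameter $t_0$, while checking that the torus-knot contribution $\Upsilon_{\widetilde{K}_U}$ does not cancel the odd slope. It is worth emphasising why the full $\Upsilon$ function, and not merely $\tau$, is essential here: the $t=0$ slope of $\Upsilon_{\widetilde{K}_J}-\Upsilon_{\widetilde{K}_U}$ recovers $\tau(\widetilde{K}_U)-\tau(\widetilde{K}_J)$, which the same lifting argument forces to be even, so $\tau$ detects no obstruction at all. The distinction between the two smooth almost concordance classes is visible only through an odd slope at an interior breakpoint of $\Upsilon$, and producing and verifying that odd interior slope is the heart of the argument.
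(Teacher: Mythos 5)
Your high-level strategy is exactly the paper's: lift a hypothetical almost concordance to the universal (double) cover, note that the local knot $J_0$ lifts to $J_0\# J_0$, and derive a contradiction because $\Upsilon_{\widetilde{K}_J}-\Upsilon_{\widetilde{K}_U}=-2\Upsilon_{J_0}$ forces all slopes of the left-hand side to be even. Your remark that $\tau$ alone cannot work is also correct. However, there are two genuine gaps, one of which kills the construction as you have specified it.

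First, the choice of pattern. You propose the $W(J)$--box pattern of Proposition~\ref{prop:L2nqnotRP3} ``with the single winding appropriate to $n=1$.'' That box is a two-strand tangle, so with winding number $1$ there is literally nowhere to place it; and the natural repair --- tying a knot locally into the single strand --- is fatal to the parity argument. Indeed, if the downstairs pattern has winding number $1$ and $J$ is inserted locally on the one strand, then the lift of $K_J$ is $C\# J\# J$, where $C$ is the lift of $K_U$: the two lifted $J$--boxes sit locally on a single strand and are just connected summands. Then $\Upsilon_{\widetilde{K}_J}-\Upsilon_{\widetilde{K}_U}=2\Upsilon_J$ has even slopes everywhere, and no contradiction can ever be extracted, for any choice of $J$. (Winding number $1$ downstairs lifts to winding number $1$ upstairs, so one never gets a ``cable-type satellite'' this way.) This is precisely why the paper instead uses the three-strand pattern of Figure~\ref{fig:exceptional}: winding number $3$ is still odd, hence still represents the order-$2$ class, but by Lemma~\ref{lem:coveringlink-lens-spaces} the lift becomes the genuine cable $(J\# J)_{3,11}$, in which the $J$--boxes interact with the cabling non-locally; the odd cabling parameters are the source of the odd slope.

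Second, even granting a suitable pattern, the step you defer --- ``arranging'' and verifying an odd interior slope --- is the entire mathematical content of the proof, and it is far from routine. In the paper this is Proposition~\ref{prop:genwhiteheaddoubles}: with $D=\Wh^+(T_{2,3})$, one shows $\Upsilon_{(D\# D)_{3,11}\#-T_{3,11}}(t)=-4+5t$ on $[\tfrac{2}{5},\tfrac{2}{3}]$. The computation requires the fact that $D\# D$ is $\nu^+$--equivalent to $T_{2,5}$, the Kim--Park theorem that satellites with nonzero winding number preserve $\nu^+$--equivalence, Hedden's criterion guaranteeing $(T_{2,5})_{3,11}$ is an $L$--space knot, and the $L$--space-knot formula for $\Upsilon$ applied to explicit Alexander polynomials. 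Since $\Upsilon$ of Whitehead-double satellites is not directly computable, some such reduction is unavoidable, and without it the proof is incomplete. (You also never verify that $K_J$ and $K_U$ are topologically concordant; this is easy --- $K_J$ is a satellite of $J$ and $D$ is topologically slice --- but it should be said.)
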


\begin{proof}Let $J\subset S^3$ be a knot and consider the knot $K_J$ in $L(2,1)$, depicted in Figure~\ref{fig:exceptional}.
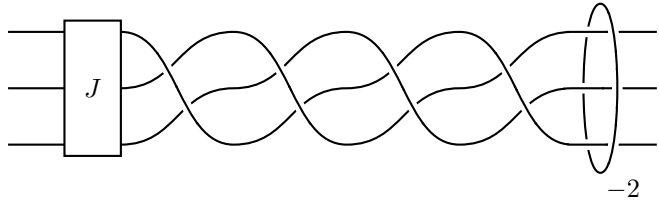
\begin{figure}

\begin{tikzpicture}[scale=0.75]
\begin{knot}[
clip radius=7pt,
clip width=5,
]

\strand[black,thick] (0,2) .. controls +(1,0) and +(-1,0) .. (2,0) .. controls +(1,0) and +(-1,0) .. (4,1) .. controls +(1,0) and +(-1,0) .. (6,2) .. controls +(1,0) and +(-1,0) .. (8,0) .. controls +(1,0) and +(-1,0) .. (9.5,0);

\strand[black,thick] (0,1) .. controls +(1,0) and +(-1,0) .. (2,2) .. controls +(1,0) and +(-1,0) .. (4,0) .. controls +(1,0) and +(-1,0) .. (6,1) .. controls +(1,0) and +(-1,0) .. (8,2) .. controls +(1,0) and +(-1,0) .. (9.5,2);

\strand[black,thick] (0,0) .. controls +(1,0) and +(-1,0) .. (2,1) .. controls +(1,0) and +(-1,0) .. (4,2) .. controls +(1,0) and +(-1,0) .. (6,0) .. controls +(1,0) and +(-1,0) .. (8,1) .. controls +(1,0) and +(-1,0) .. (9.5,1);

\strand[black,thick] (8.5,2.5) .. controls +(0.4,0) and +(0.4,0) .. (8.5,-0.5) .. controls +(-0.4,0) and +(-0.4,0) .. (8.5,2.5);

\flipcrossings{2, 5, 10, 11, 7, 13}
\end{knot}

\node at (8.9,-0.8) {$-2$};

\draw [thick, draw=black, fill=white]
       (-1,-0.2) -- (0,-0.2) -- (0,2.2) -- (-1,2.2) -- cycle;
\node at (-0.5,1) {$J$};

\draw[black, thick] (-1,0) to (-2,0);
\draw[black, thick] (-1,1) to (-2,1);
\draw[black, thick] (-1,2) to (-2,2);

\end{tikzpicture}
\caption{The knot $K_J$ in $L(2,1)$. The orientations run from left to right.}
\label{fig:exceptional}
\end{figure}
Since $K_J$ passes an odd number of times through the surgery curve it represents the class $x$. By Lemma~\ref{lem:coveringlink-lens-spaces}, the preimage of $K_J$ in the universal cover is the $(3,11)$ cable of $J\# J$ in $S^3$. Now write $D:=\Wh^+(T_{2,3})$ and $U$ for the unknot in $S^3$. Note that as $D$ is topologically concordant to $U$, $K_D$ is topologically concordant to $K_U$. We claim they are not smoothly almost concordant. Suppose, for a contradiction, that there exists a knot $\widehat{J}\subset S^3$ such that $K_D$ is smoothly concordant to $K_U\# \widehat{J}$. Then, lifting to the universal cover $S^3$, there is a smooth concordance between $(D\# D)_{3,11}$ and $T_{3,11}\#\widehat{J}\# \widehat{J}$. Thus there is a smooth concordance between $(D\# D)_{3,11}\#-T_{3,11}$ and $\widehat{J}\# \widehat{J}$. We prove below, in Proposition~\ref{prop:genwhiteheaddoubles}, that the slope of the $\Upsilon$ invariant of $(D\# D)_{3,11}\#-T_{3,11}$ is constantly 5 when $\frac{2}{5} < t < \frac{2}{3}$. In particular this is an odd number, whereas the slope of the $\Upsilon$ invariant of $\widehat{J}\# \widehat{J}$ must be even for all $t\in[0,2]$ where the slope is defined, as $\Upsilon$ is additive under connected sum. This contradiction shows that $K_D$ and $K_U$ are not smoothly almost concordant in $L(2,1)$.
\end{proof}

It remains to calculate the claimed $\Upsilon$ invariant slope that we just used. To do so, we will need some further ideas from Heegaard-Floer homology.

Write $\widehat{HF}$ to denote the `hat' version of Heegaard-Floer homology \cite{MR2065507}. A rational homology sphere $X$ is an \emph{$L$--space} if $\rk(\widehat{HF}(X))=|H_1(X;\Z)|$. A knot~$J\subset S^3$ is called an \emph{$L$--space knot} if there exists $n\in\Z_{>0}$ such that $n$--framed surgery on $S^3$ along $J$ returns an $L$--space. By \cite[Proposition 3.2]{Mos71}, for any $r,s>0$ coprime, there exists $n\in\Z_{>0}$ such that $n$--framed surgery on $S^3$ along the torus knot $T_{r,s}$ is a lens space. By \cite[Proposition 2.3]{MR2168576}, all lens spaces are $L$--spaces, so any such torus knot is an $L$--space knot.

In \cite{MR3523259}, Hom and Wu define a non-negative integer valued smooth concordance invariant $\nu^+$ for knots in $S^3$. Following M.\ H.\ Kim and K.\ Park \cite{Kim:2016uq}, we say that two knots $J,J'\subset S^3$ are \emph{$\nu^+$--equivalent} if
\[\nu^+(J\#-J') = \nu^+(-J\#J') = 0.\]
By \cite[Proposition 4.7]{Ozsvath14}, $\nu^+$-equivalent knots have the same $\Upsilon$ function.

\begin{proposition}\label{prop:genwhiteheaddoubles}Let $D$ denote the positive Whitehead double of the right handed trefoil $T_{2,3}$. Then for $\frac{2}{5} \leq t \leq \frac{2}{3}$,
\[\Upsilon_{{(D\# D)}_{3,11}\#-T_{3,11}}(t)= -4+5t
\]
\end{proposition}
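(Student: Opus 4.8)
The plan is to exploit additivity of $\Upsilon$ and reduce the whole computation, via $\nu^+$--equivalence, to $\Upsilon$ functions of torus knots, where the $L$--space knot formula applies. First I would use that $\Upsilon$ is a homomorphism out of $\mathcal{C}^{\Diff}$ with $\Upsilon_{-K}=-\Upsilon_K$ to split
\[
\Upsilon_{(D\#D)_{3,11}\#-T_{3,11}}(t)=\Upsilon_{(D\#D)_{3,11}}(t)-\Upsilon_{T_{3,11}}(t).
\]
The second term is immediately accessible: $T_{3,11}$ is a torus knot, hence an $L$--space knot by the discussion preceding the proposition, so $\Upsilon_{T_{3,11}}$ is read off from its Alexander polynomial/semigroup by the standard $L$--space formula. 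Everything then rests on computing $\Upsilon_{(D\#D)_{3,11}}$ on $[\tfrac{2}{5},\tfrac{2}{3}]$.

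The key reduction is to replace the companion $D=\Wh^+(T_{2,3})$ by the trefoil up to $\nu^+$--equivalence. Since $\tau(D)=1$ and $D$ bounds a genus one Seifert surface, $D$ is a genus one knot with $\tau(D)=g_3(D)=1$; using Hedden's computation of the knot Floer homology of Whitehead doubles \cite{Hedden:2007-1} one sees that the staircase summand of $CFK^\infty(D)$ coincides with that of $T_{2,3}$, the remaining summands being acyclic boxes that do not affect the local (i.e.\ $\nu^+$--) equivalence class. Concretely I would verify $\nu^+(D\#-T_{2,3})=\nu^+(-D\#T_{2,3})=0$, giving $D\sim_{\nu^+}T_{2,3}$. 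Because $\nu^+$ is subadditive, $\nu^+$--equivalence is preserved under connected sum, so $D\#D\sim_{\nu^+}T_{2,3}\#T_{2,3}$; and since the $\Upsilon$ function of a sufficiently positive cable depends only on the local equivalence class of the companion, $(D\#D)_{3,11}$ and $(T_{2,3}\#T_{2,3})_{3,11}$ have the same $\Upsilon$. By \cite[Proposition 4.7]{Ozsvath14}, $\nu^+$--equivalent knots share the same $\Upsilon$ function, so it suffices to compute $\Upsilon_{(T_{2,3}\#T_{2,3})_{3,11}}$.

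The main obstacle is this last computation: the $(3,11)$--cable of the granny knot $T_{2,3}\#T_{2,3}$ is \emph{not} itself an $L$--space knot, so the $L$--space formula does not apply directly, and the naive guess $\Upsilon_{T_{3,11}}(t)+\Upsilon_{T_{2,3}\#T_{2,3}}(3t)$ gives the wrong (even) slope. However, the cable is sufficiently positive, since $11>3\bigl(2g_3(T_{2,3}\#T_{2,3})-1\bigr)=9$, so its knot Floer complex is controlled by $CFK^\infty(T_{2,3}\#T_{2,3})$, the tensor square of the trefoil staircase. I would therefore compute $\Upsilon_{(T_{2,3}\#T_{2,3})_{3,11}}$ by feeding this explicit complex into the cabling description of the $\Upsilon$ invariant, or equivalently by identifying the $\nu^+$--equivalence class of the cable with an explicit $L$--space knot; either route expresses the answer in terms of torus knot $\Upsilon$ functions. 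Subtracting $\Upsilon_{T_{3,11}}$ and simplifying on $[\tfrac{2}{5},\tfrac{2}{3}]$ should then yield $-4+5t$, with the endpoints $t=\tfrac{2}{5},\tfrac{2}{3}$ appearing exactly as the parameter values where the relevant torus-knot slopes change. The crucial qualitative output, namely that the slope is the odd integer $5$, then follows and feeds into Proposition~\ref{prop:RP3}.
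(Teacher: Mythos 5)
Your strategy has the same skeleton as the paper's proof: split off $\Upsilon_{T_{3,11}}$ by additivity and compute it from the $L$--space knot formula of \cite[Theorem 6.2]{Ozsvath14}, then handle $\Upsilon_{(D\#D)_{3,11}}$ by replacing the knot, up to $\nu^+$--equivalence, with a sufficiently positive cable of an $L$--space knot, using the Kim--Park theorem that satellites with nonzero winding number preserve $\nu^+$--equivalence \cite{Kim:2016uq} together with Hedden's cabling criterion \cite{MR2511910}. The difference is \emph{where} you perform the $\nu^+$--reduction, and that is exactly where your argument has a genuine gap.

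By reducing the companion $D$ to $T_{2,3}$, you turn the problem into computing $\Upsilon$ of $(T_{2,3}\#T_{2,3})_{3,11}$, and you correctly identify the resulting ``main obstacle'': the granny knot is not an $L$--space knot (its Alexander polynomial has a coefficient of absolute value $2$), so Hedden's positivity condition $11>3(2g_3-1)=9$ buys you nothing and the $L$--space formula does not apply to its cable. Neither of your two proposed escape routes closes this gap: there is no established ``cabling description of the $\Upsilon$ invariant'' that computes $\Upsilon$ of a cable from $CFK^\infty$ of the companion, and ``identifying the $\nu^+$--equivalence class of the cable with an explicit $L$--space knot'' is precisely the unproved step, not a method. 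The missing ingredient --- which is the paper's key citation --- is the Hedden--Kim--Livingston result \cite[Proposition 6.1]{MR3466802} that $CFK^\infty(D\#D)$ (equivalently, the tensor square of the trefoil staircase) is filtered chain homotopy equivalent to $CFK^\infty(T_{2,5})$ up to an acyclic summand. Granting this, $D\#D\sim_{\nu^+}T_{2,5}$, hence $(D\#D)_{3,11}\sim_{\nu^+}(T_{2,5})_{3,11}$, which \emph{is} an $L$--space knot since $11>3(2g_3(T_{2,5})-1)=9$; its Alexander polynomial is then computed by Seifert's satellite formula, the $L$--space formula gives $\Upsilon_{(D\#D)_{3,11}}(t)=-4-5t$ on $[\tfrac{2}{5},1]$, and subtracting $\Upsilon_{T_{3,11}}(t)=-10t$ (valid on $[0,\tfrac{2}{3}]$) yields $-4+5t$ on $[\tfrac{2}{5},\tfrac{2}{3}]$. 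A smaller point: the filtered statement about the staircase summand of $CFK^\infty(D)$ is not in Hedden's paper \cite{Hedden:2007-1}, which computes $\widehat{HFK}$ and $\tau$ of Whitehead doubles; the statement in the form needed here is due to \cite{MR3466802}, so your attribution should be adjusted even in the step you did carry out.
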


\begin{proof}We calculate the $\Upsilon$ invariants of the individual connected summands and the result will follow since $\Upsilon$ is a group homomorphism from $\mathcal{C}^{\Diff}\to \Z$ to $\R$--valued continuous functions on $[0,2]$.

For an $L$--space knot $J$, the $\Upsilon$ invariant can be calculated as follows. Write the \emph{symmetrised} Alexander polynomial of $J$ as $\sum_{k=0}^n(-1)^kt^{\alpha_k}$ where $\{\alpha_k\}_{k=0}^n$ is a decreasing sequence of integers. Define another sequence of integers $\{m_k\}^n_{k=0}$ by~$m_0:=0$, $m_{2i+1}:=m_{2i}-1$ and $m_{2i+2}:=m_{2i+1}+1+2(\alpha_{2i+2}-\alpha_{2i+1})$, for~$i\geq 0$. By \cite[Theorem 6.2]{Ozsvath14}, the $\Upsilon$ invariant of $K$ is then given by\[\Upsilon_J(t)=\max_{\{i\mid0\leq 2i\leq n\}} \{m_{2i}-t\alpha_{2i}\}.\]

\begin{claim}
\[\Upsilon_{{T}_{3,11}}(t)=      -10t \qquad \mbox{\text{where $0\leq t \leq \frac{2}{3}$.}}
\]
\end{claim}

To show this, we begin by using Equation (\ref{eq:1}) to calculate:\[\Delta_{T_{3,11}}= 1-t+t^3-t^4+t^6-t^7+t^9-t^{10}+t^{11}-t^{13}+t^{14}-t^{16}+t^{17}-t^{19}+t^{20}.\]Symmetrise this polynomial by multiplying with $t^{-10}$. With this symmetrised polynomial, and the above method of \cite[Theorem 6.2]{Ozsvath14}, the remainder of the calculation is a straightforward but lengthy optimisation problem. In particular, on the interval~$[0,\frac{2}{3}]$ this calculation results in the $\Upsilon$ invariant $-10t$, completing the proof of the claim.

\begin{claim}
\[\Upsilon_{{(D\# D)}_{3,11}}(t)= -4-5t \qquad \mbox{\text{where $\frac{2}{5}\leq t \leq 1$.}}
\]
\end{claim}
To show this, we begin by citing the fact that the `infinity' version of the Heegaard-Floer knot chain complex (see \cite{MR2065507,MR2704683}) of $D\# D$ is filtered chain homotopy equivalent to that of $T_{2,5}$ up to an acyclic complex \cite[Proposition 6.1]{MR3466802}. As observed in~\cite[Example 2.3]{Kim:2016uq} this implies that $D\# D$ and $T_{2,5}$ are~$\nu^+$--equivalent. Now~\cite[Theorem B]{Kim:2016uq} states that if $P$ is a pattern in the solid torus with nonzero winding number then taking satellite knots using the pattern~$P$ preserves $\nu^+$--equivalence. Taking the $(3,11)$ cable of knot in $S^3$ corresponds to such a satellite operation. So~$(D\# D)_{3,11}$ and $(T_{2,5})_{3,11}$ are $\nu^+$--equivalent. In particular they have the same~$\Upsilon$ invariant. It remains to calculate the $\Upsilon$ invariant of~$(T_{2,5})_{3,11}$.

In \cite[Theorem 1.10]{MR2511910} it is shown that if $J\subset S^3$ is an $L$--space knot and~$s\geq r(2g_3(J)-1)$ then the $(r,s)$ cable $J_{r,s}$ is an $L$--space knot. As~$T_{2,5}$ is an~$L$--space knot, and $g_3(T_{2,5})=2$, so $(T_{2,5})_{3,11}$ is an $L$--space knot as~$11>3(2\cdot2-1)$. Combining Equation (\ref{eq:1}) with the formula in \cite[Theorem II]{Seifert:1950-1} for the Alexander polynomial of a satellite knot, we calculate
\begin{align*}
&\Delta_{(T_{2,5})_{3,11}}(t)\\
= \,&\Delta_{T_{2,5}}(t^3)\cdot\Delta_{T_{3,11}}(t)\\
= \,&1-t+t^6-t^7+t^{11}-t^{-13}+t^{15}-t^{16}+t^{16}-t^{19}+t^{21}-t^{25}+t^{26}-t^{31}+t^{32}.
\end{align*}
Symmetrise this polynomial by multiplying with $t^{-16}$. The remainder of the calculation is another straightforward but lengthy optimisation problem. On the interval~$[\frac{2}{5},1]$ this calculation results in the $\Upsilon$ invariant $-4-5t$, completing the proof of the claim, and of the proposition.
\end{proof}

\section{Topological almost concordance in lens spaces}

We will now restrict ourselves entirely to the topological category and adapt the ideas so far developed in the paper in order to prove the following theorem, which confirms that \cite[Conjecture 1.3]{FNOP} holds for any free homotopy class in any lens space.

\begin{theoremC}
  Let $Y = L(p,q)$, for $\gcd(p,q)=1$, $p>1$, and let $x \in [S^1, Y]$ be any free homotopy class. Then there are infinitely many topological almost concordance classes in $\CC_x^{\Top}(Y)$.
\end{theoremC}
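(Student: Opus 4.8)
The plan is to run the covering-link machinery of Lemma~\ref{lem:coveringlink-lens-spaces} and Proposition~\ref{prop:almostimpliesconc} exactly as in Theorem~\ref{thm:B}, but to replace the smooth input (Whitehead doubles with nonzero $\tau$) by topological input detected by the Levine--Tristram signature function $\sigma_\omega$, and to produce in each free homotopy class $x$ an \emph{infinite} family $\{K_n\}$ rather than a single pair. Writing $x$ as $\ell\in\Z/p$, the universal cover lift of any knot representing $x$ has exactly $d:=\gcd(\ell,p)$ components (this is forced by $\pi_1$, and holds in every intermediate cover), so the argument splits according to whether $d\geq 2$ or $d=1$. The heart of the matter is that an honest concordance invariant of the lift distinguishes topological almost concordance classes, provided the ambiguity of tying in a local knot can be controlled.

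In the \emph{multi-component regime} ($d\geq 2$, which includes the null-homotopic class since then $\gcd(0,p)=p>1$) I would build the $K_n$ from patterns whose covering link has \emph{unknotted} components, as in the construction of Proposition~\ref{prop:L2nqnotRP3} and Figure~\ref{fig:2component}. The point of the unknotting is that Proposition~\ref{prop:almostimpliesconc} then applies and converts a topological \emph{almost} concordance between $K_n$ and $K_m$ into an \emph{honest} topological concordance of the covering links, thereby removing entirely the indeterminacy coming from the local knot. Under an honest link concordance both the pairwise linking numbers $\lk$ and the Levine--Tristram signatures of unknotted-component two-component sublinks are invariant, so it suffices to arrange the clasps and the input knots $J_n$ (for instance connected sums of torus knots with growing signature) so that $\lk$ of a fixed pair of lift components, or $\sigma_\omega$ of a sublink as in Figure~\ref{fig:2component}, is unbounded as $n\to\infty$. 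Distinct values then force distinct almost concordance classes, yielding infinitely many.

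In the \emph{single-component regime} ($d=1$, i.e. $x$ essential and coprime to $p$) every representative of $x$ lifts to a \emph{knot} $\widetilde K_n\subset S^3$, so there are no linking numbers and Proposition~\ref{prop:almostimpliesconc} is vacuous. Moreover the ambiguity is genuine: tying a local knot $\widehat J$ into $K$ connect-sums $p\,\widehat J$ into the lift (the induced cover $\widetilde K_n\to K$ has degree $p$), changing $\sigma_\omega(\widetilde K_n)$ by $p\,\sigma_\omega(\widehat J)$, a function all of whose jumps are divisible by $2p$. Since signature \emph{values} are fully absorbed by this $p$-divisible ambiguity (any difference $\sigma_{\omega^\ell}(G)$ is itself realised as a cable signature, hence as $\sigma_\omega$ of some knot), I would instead detect the finer data of the \emph{jumps} of $\sigma_\omega(\widetilde K_n)$ taken modulo $2p$. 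Computing the lift as a cable-type knot via Lemma~\ref{lem:coveringlink-lens-spaces}, I would choose the $K_n$ so that $\sigma_\omega(\widetilde K_n)$ acquires a jump of size $\not\equiv 0\pmod{2p}$ at an $n$-dependent point of $S^1$; such a jump can neither be created nor cancelled by the $2p$-divisible ambiguity of $p\,\widehat J$, so distinct jump profiles give infinitely many classes. This is the Levine--Tristram analogue of the $\Upsilon$-slope parity argument carried out in Proposition~\ref{prop:genwhiteheaddoubles} for $\R P^3$.

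The main obstacle is exactly this single-component case $\gcd(\ell,p)=1$. Here one must work with the full signature \emph{function} and extract jump data of size coprime to the modulus $2p$, and there are two things to verify carefully: first, that a representative $K_n$ of the fixed class $x$ can be built whose lift has the prescribed non-$2p$-divisible jump at a controllable point, realising infinitely many inequivalent jump profiles while keeping every $K_n$ in the same homotopy class; and second, that the free $\Z/p$ symmetry of the lift (it is a knot invariant under the lens-space deck rotation) does not impose a Murasugi-type congruence forcing the jumps to be divisible by $p$. Establishing that a jump of size $2$ survives this periodicity, and is genuinely $n$-dependent, is the delicate step on which the coprime case turns.
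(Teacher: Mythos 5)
Your strategy for the crucial coprime case is essentially the paper's: lift to the universal cover, observe that tying a local knot $\widehat J$ into $K$ changes the lift only by a connected sum of $p/d$ copies of $\widehat J$ (where $d=\gcd(\ell,p)$), so that the Levine--Tristram ambiguity lies in $\tfrac{2p}{d}\Z$, and then beat this ambiguity with torus-knot jump data of size $2$. The paper, however, does this \emph{uniformly} for every essential class, with no case split: it takes the single family $K_{n,\ell}$ of Figure~\ref{fig:downstairs} (the torus pattern $T_{\ell,n}$ on $\ell$ strands, $n$ coprime to $\ell$), whose covering link is $T_{\ell,pn}$ with preferred component the torus knot $T_{\frac{\ell}{d},\frac{pn+\ell q}{d}}$, and applies Theorem~\ref{thm:litherland} to show the first signature jump is $\pm 2$ at $t=\frac{d^2}{\ell(pn+\ell q)}$, an $n$-dependent point. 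Comparing signature \emph{values} at an $\omega_0$ chosen between the first-jump points of the two lifts gives $0+\frac{p}{d}\sigma_{\omega_0}(J)=\pm2$, forcing $p=d$ and hence the contradiction $p\mid\ell$. This makes three of your moves unnecessary. First, your multi-component regime does not need unknotted lift components or Proposition~\ref{prop:almostimpliesconc} at all: pairwise linking numbers of covering links are already invariants of topological almost concordance, since local knotting does not change them and link concordance preserves them; and in any case the paper's signature argument covers $d\geq 2$ for free via the factor $\frac{p}{d}$. Second, the ``delicate step'' you leave open in the $d=1$ case --- constructing representatives of $x$ whose lifts have prescribed, $n$-dependent jumps of size $2$, and ruling out a Murasugi-type congruence from the free $\Z/p$ periodicity --- is exactly what the explicit torus pattern resolves: the freely periodic knots $T_{\ell,pn+\ell q}$ genuinely have first jump $\pm 2$, so no such congruence exists. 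Third, your parenthetical claim that signature \emph{values} are ``fully absorbed'' by the local-knot ambiguity is false: at any fixed $\omega$ the ambiguity is $\frac{p}{d}\sigma_\omega(\widehat J)\in\tfrac{2p}{d}\Z$, which cannot equal $\pm 2$ when $p>d$, and the paper's argument exploits precisely this. Your fallback to jump profiles modulo $2p$ is nevertheless valid (the two lifts' first-jump points never coincide for $n_1<n_2$), so your proposal would go through once the explicit family is supplied; it is just a slightly more elaborate route to the same contradiction.
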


Before we prove the theorem, we recall some results about Levine-Tristram signatures of torus knots. Let $J$ be a knot in $S^3$, and let $A$ be any choice of integral Seifert matrix for $J$. For any $\omega\in\C$ with $|\omega|=1$, the \emph{Levine-Tristram signature} is defined to be \[\sigma_\omega(J):=\sgn\left((1-\omega)A+(1-\overline{\omega})A^T\right)\in\Z,\]where the bar denotes complex conjugation, the $T$ denotes matrix transposition, and `$\sgn$' denotes taking the signature of a hermitian matrix. The value $\sigma_\omega(J)$ is independent of the choice of Seifert matrix $A$. The function $\sigma_\omega(J)\colon S^1\to \Z$ is continuous (and therefore constant) away from the set of roots of the Alexander polynomial $\Delta_J(t)$, where it may `jump' by an even integer. In particular this implies that $\sigma_\omega(J)\in2\Z$ when $\Delta_J(\omega)\neq 0$. For $\omega\in S^1$ and $t\in(-\varepsilon,\varepsilon)$, write~$\omega_t=\omega\cdot\exp(2\pi i t)$ and define the \emph{jump function} to be\[j_J\colon S^1\to2\Z;\qquad j_J(\omega):=\lim_{t\to0^+}\left(\sigma_{\omega_t}(J)-\sigma_{\omega_{-t}}(J)\right).\] Writing $S_J$ for the unit circle with the roots of $\Delta_J(t)$ removed, there is equality~$\sigma_\omega(J\# J')= \sigma_\omega(J)+\sigma_\omega(J')$ for~$\omega\in S_J\cap S_J'$; moreover the signature~$\sigma_\omega(J)$ of a knot $J$ vanishes on $S_J$ if $J$ is topologically concordant to the unknot. See~\cite[\textsection 25]{MR0246314} for proof of these last two facts.

Recalling the Alexander polynomial of the $(r,s)$ torus knot from Equation (\ref{eq:1}), the set of possible jump points for the signature function $\sigma_\omega(T_{r,s})$ is the set of~$(rs)$th roots of unity which are neither $r$th roots of unity, nor $s$th roots of unity. For~$0<N<rs-1$, write $j_{r,s}(N):=j_{T_{r,s}}(\exp(2\pi i N/rs))$ and \[L(N):=\{(i,j)\mid ir+js=N,\, 0\leq i\leq s,\,0\leq j\leq r\}.\] We will make use of the following theorem, originally due to Litherland \cite{Litherland79}, but in a form derived by Collins in \cite[\textsection 3]{Collins10}.

\begin{theorem}[Litherland]\label{thm:litherland}For $0<N<rs-1$, such that $N$ is neither a multiple of $r$, nor of $s$, the jump function of the $(r,s)$ torus knot is\[j_{r,s}\left(N\right)=\left\{\begin{array}{ll}+2&\text{if $|L(N)|=1$,} \\ -2&\text{if $|L(N)|=0$,}\end{array}\right.\]and these are the only possibilities.
\end{theorem}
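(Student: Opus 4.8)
The plan is to read off the jumps directly from the Seifert form of $T_{r,s}$, using the description of its eigenvalues coming from the Milnor fibre, and then to translate the resulting sign condition into the lattice-point count $|L(N)|$.

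First I would record, from Equation~(\ref{eq:1}), that the roots of $\Delta_{T_{r,s}}(t)$ are exactly the $(rs)$th roots of unity that are neither $r$th nor $s$th roots of unity, and that each such root is simple. Since $\sigma_\omega(T_{r,s})=\sgn\big((1-\omega)A+(1-\overline{\omega})A^T\big)$ changes only when $\omega$ passes a root of $\Delta_{T_{r,s}}$, and a simple root corresponds to a single eigenvalue of this Hermitian pencil changing sign, the jump $j_{r,s}(N)$ at such a point is necessarily $\pm 2$. This already establishes the final clause of the theorem, that $\pm 2$ are the only possibilities; it remains to pin down the sign.

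Next I would use the fact that $T_{r,s}$ is the link of the singularity $x^r+y^s=0$, so that the roots of $\Delta_{T_{r,s}}$ are precisely the numbers $\zeta_r^a\zeta_s^b=\exp\!\big(2\pi i(\tfrac{a}{r}+\tfrac{b}{s})\big)$ for $0<a<r$ and $0<b<s$, where $\zeta_m=\exp(2\pi i/m)$. Writing $\omega=\exp(2\pi i N/rs)$, a pair $(a,b)$ in this range contributes an eigenvalue at $\omega$ exactly when $as+br\equiv N \pmod{rs}$. Because $\gcd(r,s)=1$ the Chinese Remainder Theorem gives a unique solution $(a,b)$ with $0\le a<r$, $0\le b<s$; since $N$ is a multiple of neither $r$ nor $s$ this solution is interior ($0<a<r$, $0<b<s$), and since $0<as+br<2rs$ we must have $as+br\in\{N,\,N+rs\}$. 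Setting $(i,j)=(b,a)$ identifies $L(N)$ with the interior solutions of $as+br=N$: the box constraints $0\le i\le s$, $0\le j\le r$ together with $N<rs$ being a multiple of neither $r$ nor $s$ force every point of $L(N)$ to be interior, and a short $\gcd$ argument shows there is at most one. Hence $|L(N)|=1$ exactly when $as+br=N$, equivalently $\tfrac{a}{r}+\tfrac{b}{s}<1$, while $|L(N)|=0$ exactly when $as+br=N+rs$, equivalently $\tfrac{a}{r}+\tfrac{b}{s}>1$.

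It then remains to show that, as $\omega$ crosses the simple root $\exp\!\big(2\pi i(\tfrac{a}{r}+\tfrac{b}{s})\big)$ in the counterclockwise direction governing the jump function, the signature increases by $2$ when $\tfrac{a}{r}+\tfrac{b}{s}<1$ and decreases by $2$ when $\tfrac{a}{r}+\tfrac{b}{s}>1$. This is the crux, and is exactly the place where one needs the Seifert form itself and not merely $\Delta_{T_{r,s}}$: I would extract it from the explicit variation structure on the Milnor fibre, where each conjugate eigenpair $\{\zeta_r^a\zeta_s^b,\overline{\zeta_r^a\zeta_s^b}\}$ contributes a single $2\times2$ Hermitian block to $(1-\omega)A+(1-\overline{\omega})A^T$ whose signature flips in a direction governed by the sign of $1-(\tfrac{a}{r}+\tfrac{b}{s})$. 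Combining this sign computation with the dictionary of the previous paragraph yields $j_{r,s}(N)=+2$ when $|L(N)|=1$ and $j_{r,s}(N)=-2$ when $|L(N)|=0$, as claimed. The main obstacle is precisely this last sign determination: the combinatorics and the simplicity of the roots are routine, but controlling the direction of each eigenvalue crossing requires a genuine analysis of the Seifert pairing of the torus knot (equivalently, of the Brieskorn--Pham variation operator), which is the technical heart of Litherland's original argument.
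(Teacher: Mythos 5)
There is a genuine gap, and it sits exactly where your proposal flags ``the crux''. Two of your three steps are fine: since $(1-\omega)A+(1-\bar\omega)A^T=(1-\bar\omega)\bigl(A^T-\omega A\bigr)$, the determinant of the pencil vanishes, up to nonzero factors, precisely at the roots of $\Delta_{T_{r,s}}$, and the simplicity of those roots does force each jump to be exactly $\pm 2$; likewise your CRT dictionary is correct, namely $|L(N)|=1$ exactly when the unique $(a,b)$ with $0<a<r$, $0<b<s$ and $as+br\equiv N \pmod{rs}$ satisfies $as+br=N$ (equivalently $\tfrac{a}{r}+\tfrac{b}{s}<1$), and $|L(N)|=0$ exactly when $as+br=N+rs$. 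But everything you actually prove is symmetric in the two cases: you exhibit a bijection between $\{|L(N)|=1,\,|L(N)|=0\}$ and $\{\tfrac{a}{r}+\tfrac{b}{s}<1,\,\tfrac{a}{r}+\tfrac{b}{s}>1\}$, and you know the jump is $\pm2$, but nothing you write rules out the opposite correlation of signs. The assertion that each conjugate eigenvalue pair of the monodromy contributes a $2\times 2$ Hermitian block to $(1-\omega)A+(1-\bar\omega)A^T$ whose sign flip is governed by $\sgn\bigl(1-\tfrac{a}{r}-\tfrac{b}{s}\bigr)$ is not a formal consequence of anything stated: the Seifert matrix $A$ is not normal, and such an orthogonal splitting of the pencil exists only after identifying the Seifert form of $T_{r,s}$ with the variation operator of the singularity $x^r+y^s$ and computing it on the Pham basis (or an equivalent analysis). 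That computation \emph{is} the theorem; you concede as much when you defer it to ``the technical heart of Litherland's original argument'', and a proof whose only substantive step is deferred to the result being proved is not a proof.

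Two further remarks. First, for calibration, the sign lemma you need is at least true: for $T_{2,3}$ the jump at $e^{2\pi i/6}$ (where $(a,b)=(1,2)$, $\tfrac{1}{2}+\tfrac{2}{3}=\tfrac{7}{6}>1$, $|L(1)|=0$) is $-2$, while the jump at $e^{2\pi i\cdot 5/6}$ (where $(a,b)=(1,1)$, $\tfrac{5}{6}<1$, $|L(5)|=1$) is $+2$, consistent with your claimed correlation. Second, be aware that the paper offers no proof to compare against: it imports this statement as a citation to Litherland, in the form derived by Collins, and uses it as a black box in the proof of Theorem C. So to complete your argument you must genuinely carry out the variation-structure computation you allude to, or replace it by Litherland's original evaluation of the full signature function $\sigma_\omega(T_{r,s})$ as a lattice-point count, from which both the magnitude and the sign of each jump can be read off simultaneously.
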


\noindent We now have enough background to prove Theorem~\ref{thm:C}.

\begin{proof}[Proof of Theorem~\ref{thm:C}]
The case of $x$ the null homotopic class was already proved in~\cite[Theorem 1.5]{FNOP}. We shall therefore proceed by assuming that $x$ is not null homotopic. We choose an oriented meridian to the $-p/q$ surgery curve in Figure~\ref{fig:CoveredLink}, which determines an isomorphism $\pi_1(L(p,q))\cong\Z/p\Z$, under which identification we have~$[x]=:\ell$, where $0<\ell<p$.  For $n\geq0$, let $K_{n,\ell}$ be the link in $Y$ that is described in Figure~\ref{fig:downstairs}, with the orientation of the strands agreeing with the chosen orientation on the meridian of the surgery curve.

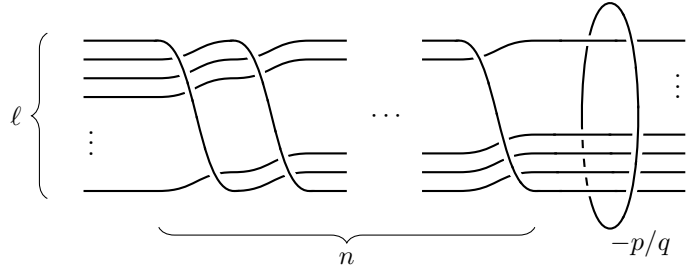
\begin{figure}[h]

\begin{tikzpicture}\begin{knot}[
clip width=5,
flip crossing=1,
flip crossing=2,
flip crossing=4,
flip crossing=6,
flip crossing=7,
flip crossing=5,
flip crossing=3,
flip crossing=18,
flip crossing=15,
flip crossing=12,
flip crossing=9,
flip crossing=19,
flip crossing=16,
flip crossing=13,
flip crossing=10,
flip crossing=21,
]

\strand[black,thick] (-1,0) ..  controls +(0,0) and +(-0,0) .. (0,0) .. controls +(0.5,0) and +(-0.5,0) .. (1,0.25) .. controls +(0.5,0) and +(-0.5,0) .. (2,0.5) .. controls +(0,0) and +(-0,0) .. (2.5,0.5);

\strand[black,thick] (-1,1.25) ..  controls +(0,0) and +(-0,0) .. (0,1.25) .. controls +(0.5,0) and +(-0.5,0) .. (1,1.5) .. controls +(0.5,0) and +(-0.5,0) .. (2,1.75) .. controls +(0,0) and +(-0,0) .. (2.5,1.75);

\strand[black,thick] (-1,1.5) ..  controls +(0,0) and +(-0,0) ..  (0,1.5) .. controls +(0.5,0) and +(-0.5,0) .. (1,1.75) .. controls +(0.5,0) and +(-0.5,0) .. (2,2)  .. controls +(0,0) and +(-0,0) .. (2.5,2);

\strand[black,thick] (-1,1.75) ..  controls +(0,0) and +(-0,0) ..  (0,1.75) .. controls +(0.5,0) and +(-0.5,0) .. (1,2) .. controls +(0.5,0) and +(-0.5,0) .. (2,0)  .. controls +(0,0) and +(-0,0) .. (2.5,0);

\strand[black,thick] (-1,2) ..  controls +(0,0) and +(-0,0) ..  (0,2) .. controls +(0.5,0) and +(-0.5,0) .. (1,0) .. controls +(0.5,0) and +(-0.5,0) .. (2,0.25) .. controls +(0,0) and +(-0,0) .. (2.5,0.25);

\strand[black,thick] (3.5,0) ..  controls +(0,0) and +(-0,0) ..  (4,0) .. controls +(0.5,0) and +(-0.5,0) .. (5,0.25) .. controls +(0.5,0) and +(-0.5,0) .. (7,0.25);

\strand[black,thick] (3.5,0.25) ..  controls +(0,0) and +(-0,0) .. (4,0.25) .. controls +(0.5,0) and +(-0.5,0) .. (5,0.5) .. controls +(0.5,0) and +(-0.5,0) .. (7,0.5);

\strand[black,thick] (3.5,0.5) ..  controls +(0,0) and +(-0,0) .. (4,0.5) .. controls +(0.5,0) and +(-0.5,0) .. (5,0.75) .. controls +(0.5,0) and +(-0.5,0) .. (7,0.75);

\strand[black,thick] (3.5,1.75) ..  controls +(0,0) and +(-0,0) .. (4,1.75) .. controls +(0.5,0) and +(-0.5,0) .. (5,2) .. controls +(0.5,0) and +(-0.5,0) .. (7,2);

\strand[black,thick] (3.5,2) ..  controls +(0,0) and +(-0,0) .. (4,2) .. controls +(0.5,0) and +(-0.5,0) .. (5,0) .. controls +(0.5,0) and +(-0.5,0) .. (7,0);

\strand[black,thick] (6,2.5) .. controls +(0.5,0) and +(0.5,0) .. (6,-0.5) .. controls +(-0.5,0) and +(-0.5,0) .. (6,2.5);

\end{knot}

\node at (6.4,-0.7) {$-p/q$};
\node at (3.05,1) {$\cdots$};
\node at (-0.9,0.75) {$\vdots$};
\node at (6.9,1.5) {$\vdots$};

\draw [decorate,decoration={brace,amplitude=6pt,raise=8pt}] (-1.2,-0.1) -- (-1.2,2.1);
\node at (-1.9,1) {$\ell$};

\draw [decorate,decoration={brace,amplitude=6pt,raise=8pt}]  (5,-0.2) -- (0,-0.2);
\node at (2.5,-0.9) {$n$};

\end{tikzpicture}
\caption{The link $K_{n,\ell}$ in $L(p,q)$, which is a copy of the torus link $T_{\ell,n}$ drawn in a neighbourhood of the standard meridian to the $-p/q$ surgery curve.}
\label{fig:downstairs}
\end{figure}

When $K_{n,\ell}$ is moreover a knot, it clearly represents the free homotopy class $x$, by construction. The link $K_{n,\ell}$ is a knot if and only if the number of meridional twists~$\ell$ is coprime to the number of longitudinal twists~$n$. The set \[\mathcal{A}:=\{n\mid\gcd(\ell,n)=1,\,n>0\}\] is infinite, and we will show that the knots in the infinite family $\{K_{n,\ell}\}_{n\in \mathcal{A}}$ belong to mutually distinct topological almost concordance classes.

Let $\wt{L}_{n,\ell}$ in $S^3$ be the covering link of $K_{n,\ell}$. Then by Lemma~\ref{lem:coveringlink-lens-spaces}, $\wt{L}_{n,\ell}$ is the torus link $T_{\ell,pn}$. Writing $d:=\gcd(\ell,p)$, note that when $\gcd(\ell,n)=1$, we have~$\gcd(\ell,pn)=d$ and so $\wt{L}_{n,\ell}$ is a $d$--component link. For each $n\in \mathcal{A}$, fix a preferred component $\wt K_{n,\ell}$ of the covering link $\wt{L}_{n,\ell}$ and note that $\wt K_{n,\ell}$ is the $\left(\frac{\ell}{d}\, ,\frac{pn+\ell q}{d}\right)$ torus knot.

Fix any $n_1, n_2\in \mathcal{A}$, with $n_1 < n_2$. Suppose, for a contradiction, that $K_{n_1,\ell}$ is topologically almost concordant to $K_{n_2,\ell}$. Write $\sim$ to indicate the relation of topological concordance. As the degree of the covering from $\wt{K}_{n_1,\ell}$ to $K_{n,\ell}$ is $p/d$, the discussion in Section~\ref{subsec:coveringlinks} implies there exists a knot $J$ in $S^3$ such that
\[\wt K_{n_1,\ell}  \#  \left(\frac{p}{d}\right) J \sim \wt K_{n_2,\ell}.\]Equivalently, using a minus sign to denote taking the mirror image and reversing the orientation, the knot $K:= \wt K_{n_1,\ell}  \#  \left(\frac{p}{d}\right) J \# -\wt K_{n_2,\ell}$ is concordant to the unknot. Set $S$ to be the unit circle $S^1\subset \C$ but with the roots of the Alexander polynomials of $\wt K_{n_1,\ell}$, $\wt K_{n_2,\ell}$, and of $J$ removed. Combining several properties of Levine-Tristram signature which we have discussed, for all $\omega\in S$ we obtain an equality
 \begin{equation}\label{eq:2}\sigma_{\omega}(\wt K_{n_1,\ell}) + \frac{p}{d}\cdot\sigma_{\omega}(J) = \sigma_{\omega}(\wt K_{n_2,\ell}).\end{equation}
 We will need the following claim.

\begin{claim} For $n\in \mathcal{A}$, the least $t>0$ such that the Levine-Tristram signature of $\wt{K}_{n,\ell}$ has a non-zero jump at $\omega=\exp(2\pi i t)$ is $t=\frac{d^2}{\ell(pn+\ell q)}$. Moreover, this jump is $+2$ when $\ell$ divides $p$, and $-2$ otherwise.
\end{claim}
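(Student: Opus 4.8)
The plan is to reduce the claim to computing the first signature jump of a single torus knot and then to apply Litherland's theorem (Theorem~\ref{thm:litherland}). By Lemma~\ref{lem:coveringlink-lens-spaces} the fixed component $\wt{K}_{n,\ell}$ is the torus knot $T_{r,s}$ with $r=\ell/d$ and $s=(pn+\ell q)/d$, where $d=\gcd(\ell,p)$; since $\wt{K}_{n,\ell}$ is a single component we have $\gcd(r,s)=1$, and $rs=\ell(pn+\ell q)/d^2$. Recall from the discussion of Levine--Tristram signatures that $\sigma_\omega$ is locally constant away from the roots of the Alexander polynomial and jumps only there, so the first task is to locate the root of $\Delta_{T_{r,s}}$ of least positive argument.

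First I would read the roots off Equation~(\ref{eq:1}): they are precisely the $(rs)$th roots of unity that are neither $r$th nor $s$th roots of unity. Writing such a root as $\exp(2\pi i N/(rs))$, the excluded cases are $s\mid N$ (which makes it an $r$th root) and $r\mid N$ (an $s$th root). I would first note that $s\geq 2$ always holds: since $d\mid p$ and $d\leq\ell<p$ we have $d\leq p/2$, whence $pn+\ell q\geq p\geq 2d$ and so $s\geq 2$. Consequently, as soon as $r\geq 2$ the index $N=1$ is admissible and yields the root of least positive argument, giving $t=1/(rs)=d^2/(\ell(pn+\ell q))$ as claimed. Because every jump of a torus knot signature has absolute value $2$ (again by Theorem~\ref{thm:litherland}), this root genuinely carries a nonzero jump, and minimality is immediate since $N=1$ is the smallest positive index.

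Next I would fix the sign of the jump from Theorem~\ref{thm:litherland}, which reduces the question to computing $|L(1)|$ for $L(1)=\{(i,j)\mid ir+js=1,\ 0\leq i\leq s,\ 0\leq j\leq r\}$. When $\ell\nmid p$ we have $r\geq 2$ and $s\geq 2$, so $ir+js\geq 2$ for every $(i,j)\neq(0,0)$ while $(0,0)$ gives $0$; hence $L(1)=\emptyset$ and the jump is $-2$, matching the ``otherwise'' branch. When $\ell\mid p$ we have $r=1$, and then $(i,j)=(1,0)$ is the unique admissible solution of $ir+js=1$, so $|L(1)|=1$ and the formula returns $+2$. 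In short, the value $+2$ occurs exactly when $\min(r,s)=1$, which (as $s\geq 2$) is equivalent to $r=1$, i.e.\ to $\ell\mid p$.

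The hard part will be the boundary case $\ell\mid p$. There $r=1$, so $T_{r,s}$ degenerates to the unknot, whose Alexander polynomial is $1$ and whose signature vanishes identically; moreover the hypotheses of Theorem~\ref{thm:litherland} fail, since every $N$ is then a multiple of $r=1$. Thus the bookkeeping that formally returns $+2$ sits outside the stated range of Litherland's theorem and must be handled with care, and I would treat this regime separately, verifying how the degenerate value is actually used in the subsequent application in the proof of Theorem~\ref{thm:C}. For the generic case $\ell\nmid p$, by contrast, the two displayed steps above settle both assertions of the claim: the least positive jump sits at $t=d^2/(\ell(pn+\ell q))$ and equals $-2$.
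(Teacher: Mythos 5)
Your proposal follows the same route as the paper's own proof: identify $\wt{K}_{n,\ell}$ with the torus knot $T_{r,s}$ for $(r,s)=\bigl(\ell/d,(pn+\ell q)/d\bigr)$, observe via Theorem~\ref{thm:litherland} that nonzero jumps can occur only at the admissible indices $N$, take $N=1$ for the least jump, and compute $|L(1)|$ to fix its sign. In the generic case $\ell\nmid p$ your argument is complete and correct, and somewhat more careful than the paper's: you verify $s\geq2$ and note that $N=1$ is an admissible index only when $r\geq2$, a hypothesis the paper invokes silently.

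The boundary case you flag is not a deficiency of your write-up; it is a genuine error in the claim itself, and your diagnosis is exactly right. When $\ell\mid p$ we have $d=\ell$, so $r=1$ and $\wt{K}_{n,\ell}=T_{1,s}$ is the unknot; its Alexander polynomial is $1$ and its Levine--Tristram signature function vanishes identically, so it has no jumps at all. Hence the first assertion of the claim (the existence of a least jump at $t=d^2/(\ell(pn+\ell q))$) fails, and the ``$+2$'' branch cannot be rescued by any amount of extra care: the paper arrives at it by computing $|L(1)|=1$ and citing Theorem~\ref{thm:litherland}, but when $r=1$ every $N$ is a multiple of $r$, so that theorem never applies---precisely your objection. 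This case does occur (for instance $\ell=1$ in any lens space, or $\ell=2$, $p=4$), and it propagates into the proof of Theorem~\ref{thm:C}: there both covering knots become unknots, Equation~\eqref{eq:2} reduces to $\frac{p}{d}\cdot\sigma_{\omega}(J)=0$, and no contradiction can be extracted; indeed for $\ell=1$ the knots $K_{n,1}$ of Figure~\ref{fig:downstairs} are all isotopic to one another. In short, you have proved the claim in the only regime in which it is true ($\ell\nmid p$, where the jump is $-2$), and your instinct to treat $\ell\mid p$ separately has uncovered a gap in the paper rather than one in your argument.
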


To see the claim, consider that by Theorem~\ref{thm:litherland} non-zero jumps occur exactly at the points $\exp(2\pi i t)$ when $t=\frac{N}{rs}$, with $N,r,s$ as in Theorem~\ref{thm:litherland}. In our case $(r,s)=\left(\frac{\ell}{d}\, ,\frac{pn+\ell q}{d}\right)$. The least~$t>0$ returning a jump point happens when $N=1$, implying $t=\frac{1}{rs}=\frac{d^2}{\ell(pn+\ell q)}$ at this point, as claimed. To show the second part of the claim, suppose\[i\cdot\left(\frac{\ell}{d}\right)+j\cdot \left(\frac{pn+\ell q}{d}\right)=1\] for $0\leq i\leq \frac{pn+\ell q}{d}$ and $0\leq j\leq \frac{\ell }{d}$. Then either $(i,j)=(0,1)$ or $(i,j)=(1,0)$ as~$i, j\geq 0$ and $\ell/d$, $(pn+\ell q)/d$ are integers. In the first case this implies $pn+\ell q=d$, which leads to a contradiction as $p,q>1$. In the second case this implies $\ell =d$, so that $\ell $ divides $p$. Hence $|L(1)|=1$ if and only if $\ell $ divides $p$, and $|L(1)|=0$ otherwise. By Theorem~\ref{thm:litherland}, the claim is proved.

Recall that $n_1<n_2$, so the claim implies the first jump point occurs at a smaller~$t$--value for $\wt{K}_{n_2,\ell}$ than for $\wt{K}_{n_1,\ell}$. With this in mind, choose $\omega_0=\exp(2\pi it_0)\in S$ such that \[\frac{d^2}{\ell(pn_2+\ell q)} < t_0 <\frac{d^2}{\ell(pn_2+\ell q)}+\varepsilon\] for small $\varepsilon >0$.  Choose $\varepsilon$ so that at $t_0$ the first jump has occurred for $\wt{K}_{n_2,\ell}$, but not for $\wt{K}_{n_1,\ell}$. Such a choice is possible as $S$ is dense in $S^1$. Inputting this information to Equation (\ref{eq:2}), we obtain that $0+\frac{p}{d}\cdot \sigma_{\omega_0}(J)=\pm2$. Recalling that $\sigma_{\omega_0}(J)\in2\Z$, this implies that $p=\pm d$. But as $d=\gcd(\ell ,p)>0$, this implies that $p$ divides $\ell $, which is a contradiction as $\ell <p$.
\end{proof}

\bibliographystyle{alpha}
\bibliography{biblio_smoothalmost}

\end{document}